\documentclass[11pt]{article}
\usepackage{color}
\usepackage{amssymb,amsfonts,amsthm, mathtools}
\usepackage{todonotes}
\usepackage[english]{babel}
\usepackage[utf8]{inputenc}
\usepackage[margin=24mm]{geometry}
\usepackage{graphicx,overpic}
\usepackage{times,fourier}
\usepackage{tikz}
\usetikzlibrary{shapes,calc,math}
\usepackage{pgf}
\usepackage{pgfplots}
\pgfplotsset{compat=newest}
\usetikzlibrary{decorations.markings}
\tikzset{->-/.style={decoration={ markings, mark=at position #1 with {\arrow{>}}},postaction={decorate}}}
\usepackage{float}
\usepackage[utf8]{inputenc}
\usepackage[absolute]{textpos}
\usepackage{ulem}
\usepackage{emptypage}
\usepackage{amsbsy,mathrsfs}
\usepackage{subcaption}
\usepackage{xcolor}
\usepackage{multicol}
\usepackage{thmtools, thm-restate}
\usepackage[colorlinks=true,citecolor=blue, hypertexnames=false]{hyperref}

\def\ov{\overline}
\makeatletter
\renewcommand\section{\@startsection {section}{1}{\z@}%
  {-2ex \@plus -1ex \@minus -.2ex}%
  {1ex \@plus.1ex}%
  {\normalfont\bf\sffamily\color{darkblue}}}
\renewcommand\subsection{\@startsection{subsection}{2}{\z@}%
  {-1.75ex\@plus -0.4ex \@minus -.2ex}%
  {0.6ex \@plus .1ex}%
  {\normalfont\small\bf\sffamily}}
\renewcommand\subsubsection{\@startsection{subsubsection}{3}{\z@}%
  {-0.6ex\@plus -0.2ex \@minus -.2ex}%
  {0.4ex \@plus .1ex}%
  {\normalfont\normalsize\it}}
\renewcommand\paragraph{\@startsection{paragraph}{4}{\z@}%
  {0.2ex \@plus0.2ex \@minus0.1ex}{-0.5em}%
  {\normalfont\normalsize\bfseries}}
\def\ps@headings{%
  \let\@oddfoot\@empty
  \let\@evenfoot\@empty

  \def\@evenhead{\small\sffamily\thepage\hfil\slshape\leftmark}%
  \def\@oddhead{\small\sffamily{\slshape\rightmark}\hfil\thepage}%
  \let\@mkboth\markboth
  \def\chaptermark##1{\markboth{{\ifnum \c@secnumdepth >\m@ne
		\if@mainmatter \@chapapp\ \thechapter. \ \fi \fi ##1}}{}}%
  \def\sectionmark##1{\markright {{\ifnum \c@secnumdepth >\z@
		\thesection. \ \fi ##1}}}}
\def\fbf#1{\setbox0=\hbox{$#1$}\kern-0.10\wd0
  \lower0.02em\copy0\kern-\wd0 \lower0.02em\hbox{\kern+0.04em\copy0}\kern-\wd0
  \raise0.00em\copy0\kern-\wd0 \raise0.00em\hbox{\kern-0.04em\box0}}
\def\overl@ss#1#2{\vcenter{\offinterlineskip
        \ialign{$\m@th#1\hfil##\hfil$\crcr#2\crcr<\crcr } }}
\def\gl{\mathrel{\mathpalette\overl@ss>}}
\makeatother

\advance\textwidth -10mm
\advance\textheight -14mm
\advance\hoffset 4mm
\advance\voffset 4mm

\advance\parskip 4pt
\numberwithin{equation}{section}
1

\declaretheorem[name=Theorem, parent=section]{theorem}
\declaretheorem[name=Lemma,sibling=theorem]{lemma}
\declaretheorem[name=Corollary, sibling=theorem]{corollary}

\declaretheorem[name=Remark, sibling=theorem]{remark}
\declaretheorem[name=Fact, sibling=theorem]{Fact}
\declaretheorem[name=Proposition, sibling=theorem]{proposition}
\declaretheorem[name=Assumption, sibling=theorem]{assumption}

\declaretheorem[name=RHP, parent=section]{RHP}

\makeatletter
\newcommand{\address}[1]{\gdef\@address{#1}}
\gdef\@address{} % default empty

\renewcommand{\maketitle}{%
  \begin{center}
    {\LARGE\bfseries\sffamily \@title\par}
    \vspace{1.4ex}
    {\large \@author\par}
    \vspace{0.6ex}
    {\itshape \@address\par}
    \vspace{0.2ex}
    {\small \@date\par}
  \end{center}
  \vspace{1.4\bigskipamount}
}
\makeatother

\def\be{\begin{equation}}
\def\ee{\end{equation}}
\def\bse{\begin{subequations}}
\def\ese{\end{subequations}}

\definecolor{deeppurple}{rgb}{0.5, 0, 0.7}
\definecolor{deeppurple}{rgb}{0.5, 0, 0.7}
\definecolor{darkblue}{rgb}{0, 0, 0.7}

\definecolor{deeppurple}{rgb}{0.5, 0, 0.7}

\def\half{{\textstyle\frac12}}

\def\dn{\mathop{\rm dn}\nolimits}

\def\Real{\mathbb{R}}
\def\R{\mathbb{R}}
\def\Complex{\mathbb{C}}

\newcommand{\bigo}[1]{\mathcal{O} \left( #1 \right) }

\def\Z{\mathbb{Z}}
\def\i{\text{i}}

\def\Re{\mathop{\rm Re}\nolimits}
\def\Im{\mathop{\rm Im}\nolimits}

\def\d{\mathrm{d}}

\let\t=\theta
\def\e{\mathop{\rm e}\nolimits}
\def\sn{\mathrm{sn}}
\def\cn{\mathrm{cn}}
\def\dn{\mathrm{dn}}
\def\@#1{{\mathbf{#1}}}
\def\_#1{{\mathsf{#1}}}
\let\~=\tilde
\let\==\bar
\let\^=\hat
\let\<=\langle
\let\>=\rangle

\def\max{\mathop{\rm max}\nolimits}

\def\note[#1]{\marginpar{\color{blue}[#1]}}

\def\C{{\mathbb C}}
\def\R{{\mathbb R}}
\def\N{{\mathbb N}}

\def\Z{{\mathbb Z}}

\def\1{{\bf 1}}

\def\e{\mathrm{e}}

\makeatother
\def \pa{\partial}

\advance\textheight 12mm
\advance\voffset -6mm
\allowdisplaybreaks

\newcommand{\Etl}{\begin{bsmallmatrix} 1 & 0 \\ 0 & 0 \end{bsmallmatrix}}

\newcommand{\Ebr}{\begin{bsmallmatrix} 0 & 0 \\ 1 & 0 \end{bsmallmatrix}}

\begin{document}
\pagestyle{plain}

\title{Direct Scattering of the Focusing Nonlinear Schr\"odinger Equation with Step-like Oscillatory Initial Data}
\author{Tamara Grava $^1$, Robert Jenkins $^2$, Xiaofan Zhang $^{1,3}$ and Zechuan Zhang $^1$}
\address{$^1$ Scuola Internazionale Superiore di Studi Avanzati (SISSA), Trieste, 34136 - Italy\\ 
$^2$ 
University of Central Florida, Orlando, FL 32816 - USA\\
$^3$ China University of Mining and Technology, Xuzhou 221116 - China}
\maketitle

\begin{abstract}
\noindent
In this manuscript we set up the direct and inverse scattering problems for step-like  traveling wave  solutions of 
the nonlinear Schr\"odinger equation. Specifically,  we consider  initial data $u(x,0)$ satisfying
 \begin{equation*}
     u(x,0)\to
     \begin{cases}
     u_0^\ell(x)& \mbox{as $x\to-\infty$},
     \\
      u_0^r(x)& \mbox{as $x\to\infty$},
      \end{cases}
     \end{equation*}
where $u_0^\ell(x)$ and $u_0^r(x)$ are elliptic traveling waves.
Under certain assumptions on the initial data we formulate the direct scattering problem and establish analytic properties of the scattering data. We then formulate the inverse problem
as a Riemann--Hilbert problem and prove its solvability. Finally, we observe that this Riemann–Hilbert formulation is a special case of the one arising for full soliton-gas initial data.
\end{abstract}

\medskip
\tableofcontents

%%%%%%%%%%%%%%%%%%%%%%%%%%%%%%%%%%%%%%%%%%%%%%%%%%%%%%%%%%%%%%%%%%%%%%%%%%%%%%%%%%%%%%%%%%%%%%%%%%%%%%%%%%%%%%%%%%%%%%%
%%%%%%%%%%%%%
\section{Introduction } 
\label{s:intro}

This paper investigates the direct and inverse scattering problems for the cubic focusing nonlinear Schr\"odinger
(NLS) equation with step-like oscillatory initial data, namely with two different  quasi-periodic
travelling wave solutions  at $x=\pm \infty$.
The focusing NLS equation is given by
\be
\i u_t + \half u_{xx} +  |u|^2 u = 0\,,\quad x\in\R,\quad  t\in\R^+, 
\label{e:nls}
\ee 
where $u = u(x,t)\in\C$ and subscripts denote partial derivatives.
This equation is a fundamental integrable model with applications to water waves \cite{AblowitzSegur81, ZS72}, nonlinear fiber optics \cite{HT73}, plasma physics \cite{Zakharov71}, and Bose-Einstein condensates \cite{PS02}.
It  is  one of the prototypical integrable partial differential equations (PDEs), and it can be expressed as the compatibility condition of two linear equations,  the so-called Lax pair,  introduced by Zakharov and Shabat in 1972 \cite{ZS72},
 which takes the form 
\bse%
\label{e:NLSLP}
\begin{align}
	&W_x = \mathcal{L}(u,z)\,W\,, &&\mathcal{L}(u,z) = -\i z\sigma_3 + U(x,t)\,,
	\label{e:zs}
	\\
	&W_t = \mathcal{B}(u,z)\,W\,,  &&\mathcal{B}(u,z) = -\i z^2\sigma_3 + zU - \frac{1}{2}\i\sigma_3(U^2-U_x)\,,
	%+\frac{i}{2}\omega_0\sigma_3\,,
	\label{e:NLSLP2}
\end{align}
\ese
where $W(x,t;z)\in  \mbox{Mat}(2\times 2,\Complex)$ is the fundamental matrix solution  of the above linear system,  $z\in\Complex$ is the spectral parameter,  and  
\be
U(x,t)=\begin{pmatrix}
    0 & u(x,t)\\
    -\overline{u(x,t)} & 0
\end{pmatrix}, \qquad \sigma_3=\begin{pmatrix*}[r]1&0\\0&-1\end{pmatrix*}.
\ee
  We call  equation  \eqref{e:zs} the Zakharov--Shabat (ZS) spectral problem associated with  the NLS equation. 
The compatibility of the above two equations gives
\[
\mathcal{L}_t-\mathcal {B}_x+[\mathcal{L}, \mathcal{B}]=0,
\]
that is   equivalent to  the NLS equation \eqref{e:nls}.
The NLS equation  has a family of quasi-periodic      travelling waves of the form
\begin{equation}\label{e:qpdef}
  u_0(x,t,v,x_0) \,=\, \e^{-\i \omega_0 t}\,\e^{-\i  \big( -vx + \frac{v^2}2t\big)}\psi(x-vt-x_0),
  \quad x \in \R~, \quad  t \in \R^+~,
\end{equation}
where  $\psi : \R \to \C$  
is a quasi-periodic function  while  $|\psi(x)|$ is a periodic   elliptic function,  $v$ is a real constant that defines the velocity of the wave and $x_0$ is a phase shift.  The function $\psi$ satisfies the ODE 
\begin{equation}\label{e:snls}
 \frac{1}{2} \psi_{xx}(x) + \omega_0 \psi(x) +  |\psi(x)|^2 \psi(x) \,=\, 0~, \quad
  x \in \R~.
\end{equation}
The general solution  of the ODE  \eqref{e:snls} can be written as  $\psi(x,t)=\phi(x) \e^{\i\theta(x)},$ where $\phi$ and $\theta $ are expressed via elliptic functions \textcolor{blue}{(see \eqref{E:defs}). }

When $\psi=\e^{\i p_0x}\psi_0$ with constants $p_0$ and  $\psi_0$, one obtains  the 
 {\it plane wave} solution.
 It is well-known  that  plane waves
are dynamically  unstable  in the focusing case \cite{Zakharov68,GH1}.  The essence of the phenomenon is well-understood:  linearized stability analysis shows that a uniform background is unstable to long wavelength perturbations \cite{ZO}. 

The direct and inverse scattering problem  for  non-zero boundary conditions,  and in particular for  plane waves   with different phases  at $x=\pm \infty$, 
 has been developed by  Biondini and Kova\v{c}i\v{c} in  \cite{BG14}. Demontis, Prinari, van der Mee and Vitale further develop the inverse scattering transform (IST) for asymmetric boundary  plane wave initial  data \cite{DPVV14}.  The long-time dynamics of plane wave solutions with long-range perturbation has been considered in \cite{BM17}.

In this manuscript, we  concentrate on the less explored situation where $|\psi|$ is an  elliptic  function,  and study the direct and inverse scattering problem  with initial data asymptotic  to  two  distinct elliptic solutions as $x\to \pm\infty$.
 Egorova et al.\ \cite{Egorova+24} established  the IST  for the Korteweg-de Vries  equation with  step-like quasi-periodic (finite-gap) backgrounds. However,  the  extension to the NLS equation presents new challenges. 
     The elliptic solutions are linearly unstable \cite{Deconinck}, and one of the motivations to develop the direct and inverse scattering  problem  for step-like elliptic wave solutions is to understand the long-time
     behaviour of these solutions. 
     
     We consider initial data of the form
     \begin{equation}
     \label{initial_data}
     u(x,0)\to\left\{
     \begin{array}{ll}
     u_0^\ell(x)& \mbox{as $x\to-\infty$}\\
      u_0^r(x)& \mbox{as $x\to\infty$}
      \end{array}
      \right.
     \end{equation}
         where $   u_0^\ell(x)=u_0^\ell(x,t=0,v^\ell, x_0^\ell)$ and $   u_0^r(x)= u_0^r(x,t=0,v^r,x_0^r)$ are two distinct  elliptic solutions to the NLS equation of the form \eqref{e:qpdef} with velocities $ v^\ell $  and $v^r$,
respectively.         
         
          Our result is the formulation of the direct and inverse scattering problem for such initial data.
         
\paragraph{Statement of the result.}    For an elliptic travelling wave $u_0(x)$ of the form \eqref{e:qpdef},   the spectrum of the corresponding operator $\mathcal{L}$ is the set of values $z\in\C$ such that the fundamental matrix solution $W_0(x,t=0;z)$  of \eqref{e:NLSLP} is bounded  for all $x\in\R$. It consists of the real line and two arcs  in the complex plane,  denoted by $\Sigma_1$ and $\Sigma_2$. We write $\Sigma:=\Sigma_1\cup\Sigma_2$, so that the spectrum is $\Sigma\cup\R$. Namely,
\begin{equation}
    \label{spctrum0}
\Sigma\cup\R:=\left\{z\in \C \,\Big\vert\ |W_0(z,x,t=0)|<\infty,  \,\forall x\in\R\right\}.
\end{equation}

\begin{figure}[t]
	\centering
	\begin{subfigure}[b]{0.38\textwidth}
		\centering
		\includegraphics[width=\linewidth]{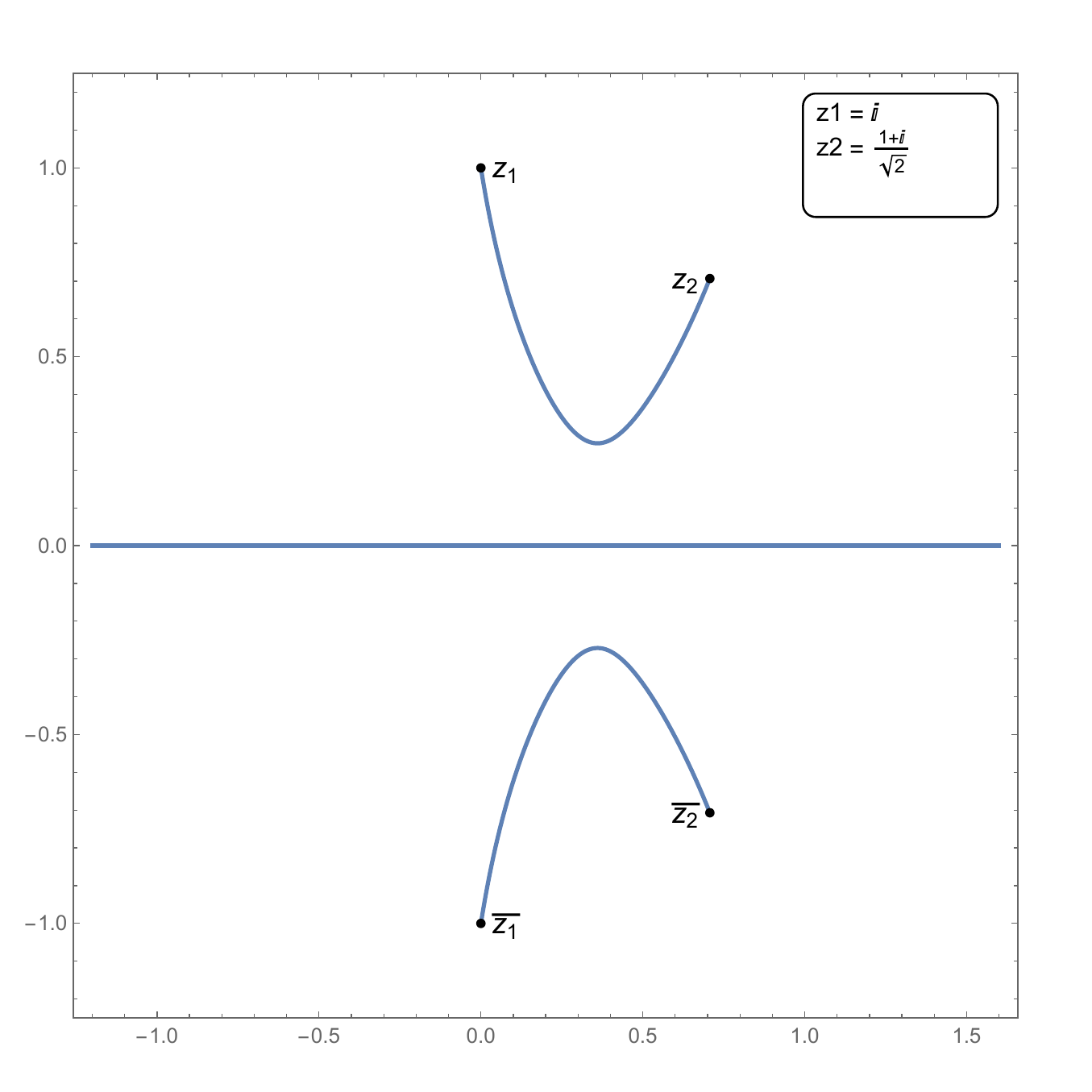}
		\caption{}
	\end{subfigure}\hfill
	\begin{subfigure}[b]{0.38\textwidth}
		\centering
		\includegraphics[width=\linewidth]{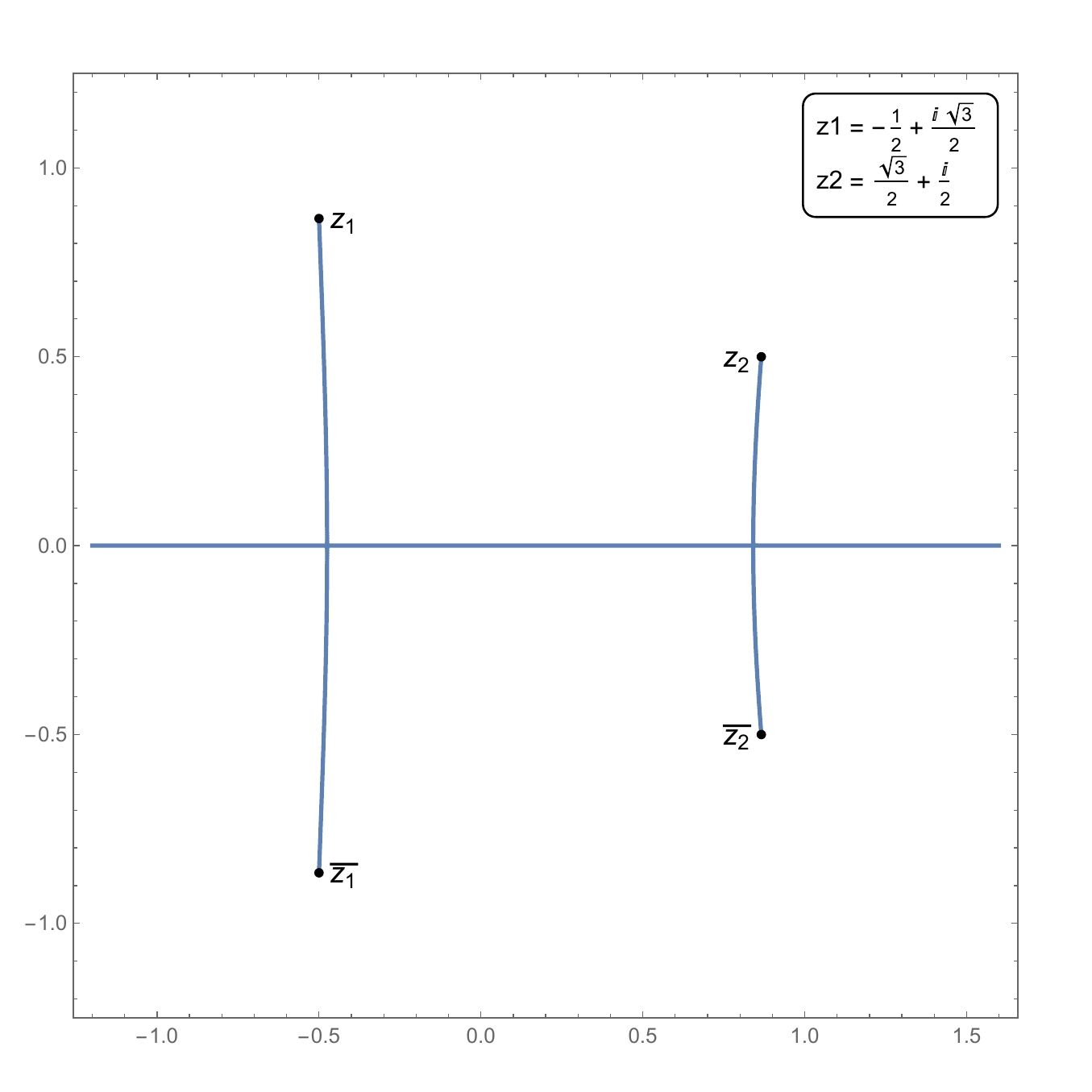}
		\caption{}
	\end{subfigure}\hfill
	\caption{Two typical configurations of the spectral curve $\Sigma$: (a) $\Sigma$ is disjoint from the real axis; (b) $\Sigma$ intersects the real axis.}
	\label{fig:spectralcurves}
\end{figure}
The integrability of the NLS equation guarantees that the spectrum $\Sigma\cup \R$ is time independent. Furthermore,  the symmetry of ZS    spectral problem implies that the oriented arc $\Sigma$  satisfies  $\overline{\Sigma}=-\Sigma$.  In this manuscript we study the direct and inverse scattering in the case where $\Sigma$ does not intersect the real axis, (see Figure~\ref{fig:spectralcurves} (a)).   When $\Sigma$ intersects the real axis, (see  Figure~\ref{fig:spectralcurves} (b))  a minimal modification of our analysis is required.

   We denote by $W^\ell_0(x;z)$ and $W^r_0(x;z)$ the fundamental matrix solutions of the ZS  spectral problem   \eqref{e:zs}, associated with the initial data 
     $u_0^\ell(x)$ and $u_0^r(x)$, respectively. Their corresponding spectra are $\Sigma^\ell\cup\R$ and $\Sigma^r\cup \R$.
     We then define  the Jost functions  $W^s(x;z)$,  $s\in\{\ell,r\}$,  as the solutions  of   \eqref{e:zs} for the initial data $u(x,0)$ in \eqref{initial_data}  such that 
     \begin{equation}
     \label{eq:Jost_intro} 
   W^s(x;z)\to W^s_0(x;z) , \quad  \mbox{as  $x\to\infty_s$ and $z\in \R\cup\Sigma^s$}, \;s\in\{\ell,r\}.
   \end{equation}
    Here we set $\infty_\ell=-\infty$  and $\infty_r=+\infty$. It follows that  $W^s(x;z)$ satisfies the integral   equation
 \[
	W^s(x;z) =W_0^s(x;z) \left[ I +  \int_{\infty_s}^x  W_0^s(y;z)^{-1} \Delta U^s(y) W^s(y;z) \d y \right],
\]
where  $\Delta U^s(x) = U(x,0) - U^s(x,0).$   It can be deduced from Proposition~\ref{propm} that a unique solution of the above integral equation exists for $z \in \R \cup \Sigma^s$. Moreover, letting $W^s_1(x;z)$ and $W^s_2(x;z)$ be the first and second columns of $W^s(x;z)$, respectively,  it follows that:
\begin{itemize}
\item $W^\ell_1(x;z)$ is analytic for $z\in \mathbb{C}^+ \backslash \Sigma_{1}^\ell$  and continuous up to the boundary and $W^r_2(x;z)$ is analytic   for $z\in \mathbb{C}^+ \backslash \Sigma_{1}^r$  and continuous up to the boundary;
\item $W^\ell_2(x;z)$ is analytic for $z\in \mathbb{C}^- \backslash \Sigma_{2}^\ell$  and continuous up to the boundary and $W^r_1(x;z)$ is analytic for $z\in \mathbb{C}^- \backslash \Sigma_{2}^r$  and continuous up to the boundary.
\end{itemize}
For $z\in\mathbb{R}\cup(\Sigma^r\cap\Sigma^\ell)$, the matrix solutions $W^\ell(x;z)$ and $W^r(x;z)$  are both bounded and unimodular, hence  they are linearly related, i.e.,
\bse\label{sr}
\begin{gather}
W^\ell(x; z)=W^r(x; z) S(z),\quad z \in \mathbb{R}, \\
\label{e:bandscat}
W^\ell(x ; z_\pm)=W^r(x;  z_\pm) S(z_\pm),\quad z\in\Sigma^r\cap\Sigma^\ell,
\end{gather}
\ese
where the subscript $+$ (resp. $-$) denotes the limiting value from the positive (resp. negative) side of the oriented $\Sigma^r\cap\Sigma^\ell.$
The scattering matrix $S(z)$ takes the form 
\begin{align}\label{e:S_intro}
S(z)=\begin{pmatrix}a(z) & -b^*(z) \\ b(z) & a^*(z)\end{pmatrix}, \quad z \in \mathbb{R},\qquad\qquad~~
S(z_{\pm})=\begin{pmatrix}a(z_{\pm}) & -b_2(z_{\pm}) \\ b_1(z_{\pm}) & a^*(z_{\pm})\end{pmatrix}, \quad z_{\pm} \in \Sigma^r_1\cap\Sigma^\ell_1\,.
\end{align}
However, a matrix scattering relation like \eqref{e:bandscat} is not available on $\Sigma^r\setminus\Sigma^\ell$ because only $W^\ell_1(x;z)$ (resp. $W^\ell_2(x;z)$) admits analytic extension to $z \in \C^+\backslash\Sigma_1^\ell$  (resp. $z \in \C^-\backslash\Sigma_2^\ell$ ). These single columns satisfy: 
\bse
\begin{gather}
W_1^\ell(x;z)=a(z) W_1^r(x;z)+b_1(z)W_2^r(x;z)\,, \qquad z\in \Sigma^r_1\setminus\Sigma^\ell_1\,,\label{e:W1-}\\
W^\ell_2(x;z)=-b_1^*(z) W^r_1(x;z)+a^*(z)W^r_2(x;z)\,,\qquad z\in \Sigma^r_2\setminus\Sigma^\ell_2\, .
\end{gather}
\ese
Similarly, since only $W^r_2(x;z)$ (resp. $W^r_1(x;z)$)
has an analytic extension to $z\in \C^+\backslash\Sigma_1^r$  (resp. $z\in \C^-\backslash\Sigma_2^r$), we have
\bse
\begin{gather}
W_2^r(x;z)=b_2(z)W_1^\ell(x;z)+a(z) W_2^\ell(x;z) \,,\qquad z\in \Sigma^\ell_1\setminus\Sigma^r_1\,,\label{W2+}\\
W^r_1(x;z)=a^*(z)W^\ell_1(x;z)-b_2^*(z)W_2^\ell(x;z)\,,\qquad z\in \Sigma^\ell_2\setminus\Sigma^r_2\,.
\end{gather}
\ese
We define the Sobolev space   $ \mathcal{W}^{n,1}(\Real)$ as the  subset of   functions    $f\in L^1(\R)$,  such that  $\partial_x^j f \in L^1(\mathbb{R})$ for all $0 \le j \le n$ and similarly for  $ \mathcal{W}^{n,1}(\R^\pm)$.
We define the  weighted  space $L^{p,q}(\R)$  as the set of function $f\in L^p(\R)$ and  $\langle x\rangle^q f\in L^p(\R)$, where $\langle x\rangle=(1+x^2)^{1/2}$ denotes the Japanese bracket,  and similarly for $  L^{p,q}(\R^\pm)$.
Our first result concerns the direct scattering problem.  We denote the endpoints of the spectrum  by $\partial\Sigma$.  Figure~\ref{f:Sigma} illustrates the possible configurations of $\Sigma^\ell$ and $\Sigma^r$. We focus on the case where $\Sigma^\ell$ and $\Sigma^r$ overlap along a nontrivial path  away from $\R$. Figure~\ref{f:Sigma}(c) depicts the special situation where this overlap is aligned with the imaginary axis for clarity. 

\begin{assumption} 
In what follows, we assume that $a(z)$ has no zeros in $\C^+$. Equivalently, the scattering problem admits no discrete eigenvalues (no solitons).
\end{assumption}

\begin{figure}[t]
	\centering
	\begin{subfigure}[b]{0.32\textwidth}
		\centering
		\includegraphics[width=\linewidth]{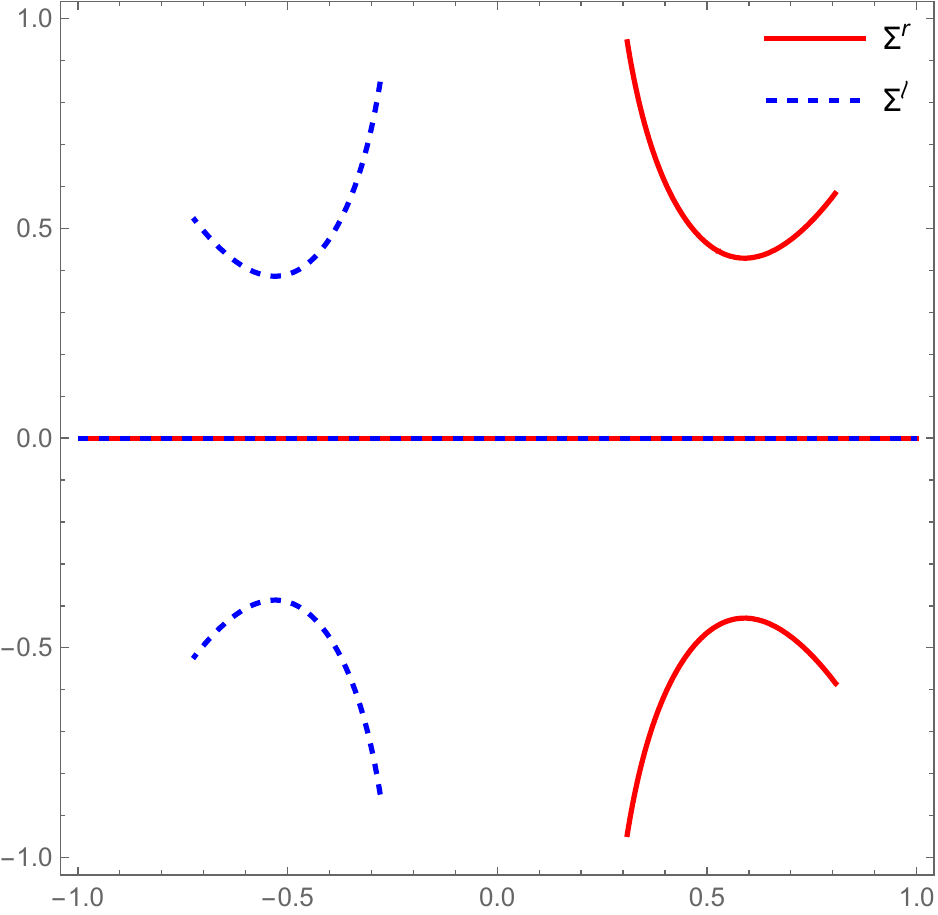}
		\caption{}
	\end{subfigure}\hfill
	\begin{subfigure}[b]{0.32\textwidth}
		\centering
		\includegraphics[width=\linewidth]{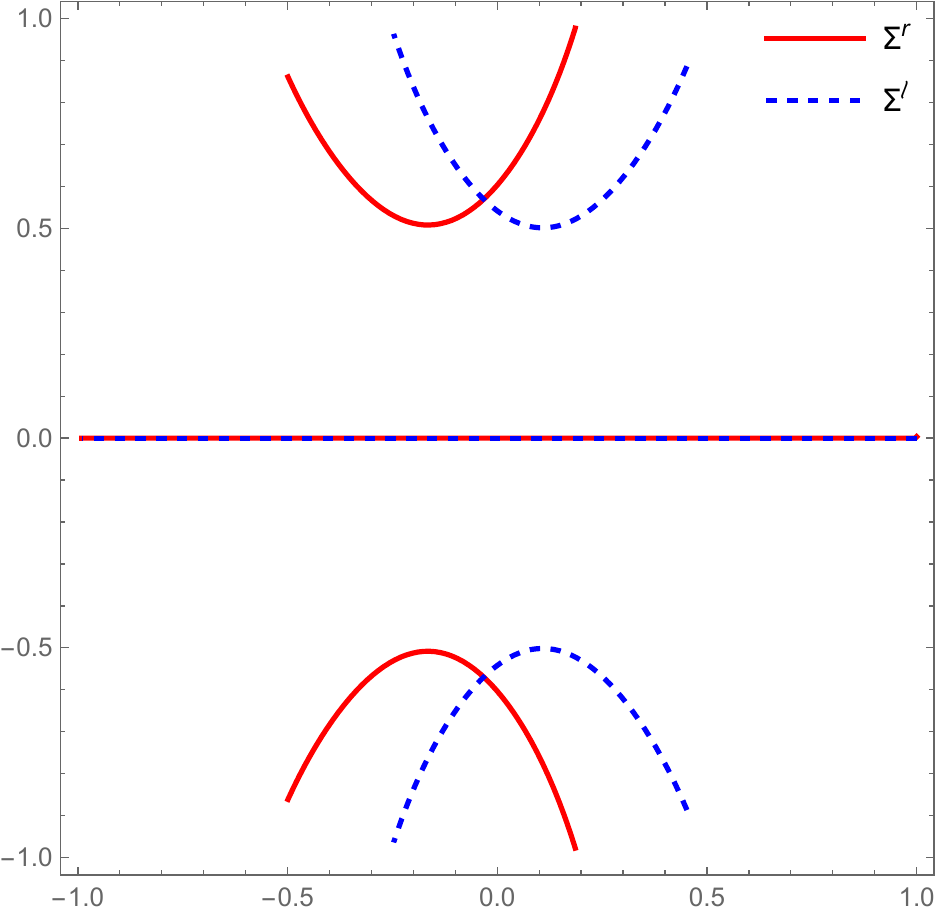}
		\caption{}
	\end{subfigure}\hfill
	\begin{subfigure}[b]{0.32\textwidth}
		\centering
		\includegraphics[width=\linewidth]{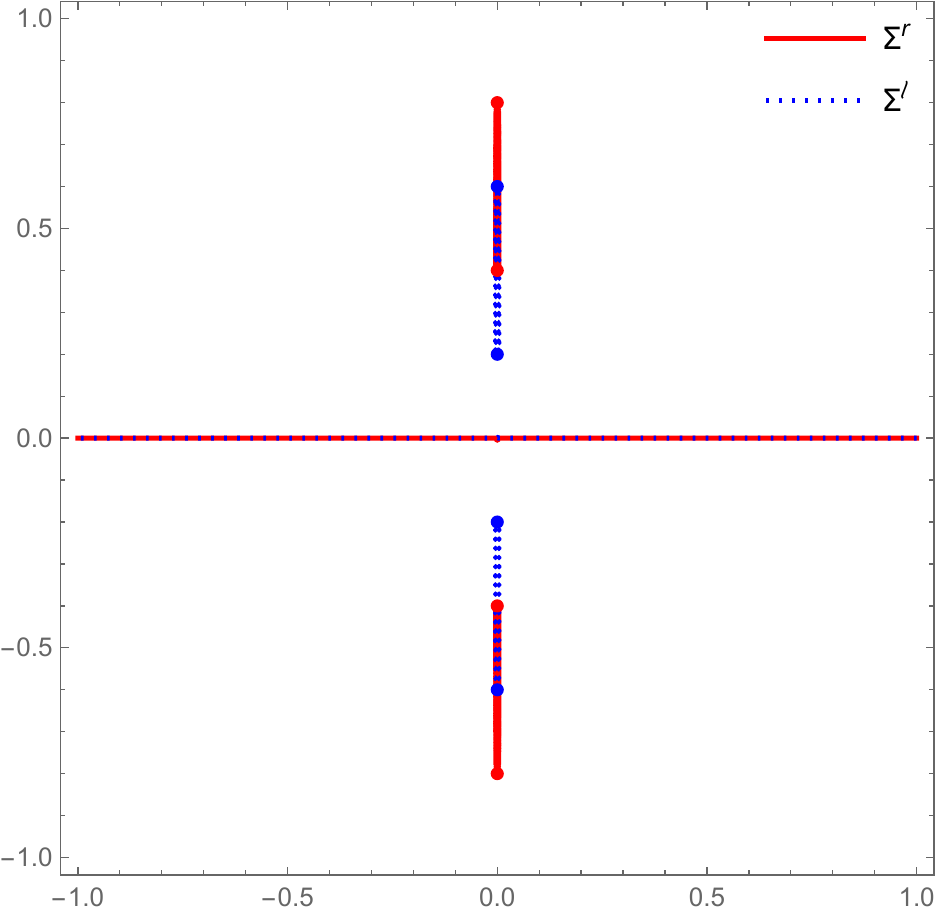}
		\caption{}
	\end{subfigure}
	
	\caption{Relative configurations of $\Sigma^\ell$ and $\Sigma^r$:  (a) disjoint; (b) intersecting at a single point; (c) overlapping along a segment.}
	\label{f:Sigma}
\end{figure}

\begin{theorem}\label{theorem1}
    Let $u(x)-u_0^\ell(x)\in L^1(\Real^-)$, $u(x)-u_0^r(x)\in L^1(\Real^+)$, and $a(z)$, $b(z)$, $b_1(z)$ and $b_2(z)$ be the scattering data in \eqref{e:S}, then
    \begin{enumerate}
        \item The scattering coefficients can be expressed in terms of $W^s(x;z)$ as
 \begin{subequations} \label{e:scoefs}
      \begin{align}
         a(z)&=\det[W_{1}^\ell(x;z), W_{2}^r(x;z)],\qquad b(z)=\det[W^r_1(x;z),W^\ell_1(x;z)]\,,\\
b_1(z)&=\det[W^r_1(x;z),W^\ell_1(x;z)],\qquad b_2(z)=\det[W^r_2(x;z),W^\ell_2(x;z)]\,.
        \end{align}
        \end{subequations}
        It follows that $a(z)$ extends analytically to $z\in\Complex^+\setminus(\Sigma_1^\ell\cup\Sigma_1^r)$, while $b(z)$ is defined only for $z\in\R$, $b_1(z)$ is defined only for $z\in\Sigma_1^r$, and $b_2(z)$ is defined only for $z\in\Sigma_1^\ell$. Moreover, $a(z)$, $b_1(z)$ and $b_2(z)$ all have at worst quartic root singularities at $\{z_1^s, z_2^s, \overline{z_1^s}, \overline{z_2^s}\}$.
         \item If   $u(x)-u_0^\ell(x)\in \mathcal{W}^{1,1}(\Real^-)$ and $u(x)-u_0^r(x)\in \mathcal{W}^{1,1}(\Real^+)$, then for $z\in\overline{\C^+}$,
        \bse \label{e:scat.asympt}
        \be
         \label{a+asympt}
         \lim_{z\to\infty}a(z)\e^{-\mathrm{i} (x_0^\ell-x_0^r)z}=1+\mathcal{O}(z^{-1}),
         \ee
         \ese
         Moreover, if $u(x)-u_0^\ell(x)\in \mathcal{W}^{4,1}(\Real^-)$ and $u(x)-u_0^r(x)\in \mathcal{W}^{4,1}(\Real^+)$, then
         for $z\in\Real$,
         \begin{align}\label{b+asympt}
         b(z)\e^{-\mathrm{i}(x_0^\ell-x_0^r)z}=\mathcal{O}(z^{-4}), \qquad |z|\to\infty.
         \end{align}
         \end{enumerate}
        \end{theorem}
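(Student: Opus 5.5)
We split the argument into the two parts of the statement.

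\emph{Part 1: determinant formulas and analyticity.}
First we use that $\operatorname{tr}\mathcal{L}(u,z)=0$. Hence for any two solutions of \eqref{e:zs} the Wronskian $\det[\cdot,\cdot]$ is independent of $x$. The background solutions $W_0^s$ are normalized to be unimodular, and the Volterra equation preserves $\det W^s(x;z)\to \det W^s_0=1$ as $x\to\infty_s$, so $\det W^s\equiv1$.

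For $z\in\R$ we take determinants of columns in $W^\ell=W^rS$. The first column gives $W_1^\ell=aW_1^r+bW_2^r$. Pairing with $W_2^r$ and with $W_1^r$ yields
\[
a=\det[W_1^\ell,W_2^r],\qquad b=\det[W_1^r,W_1^\ell].
\]
The relation \eqref{e:W1-} gives the formula for $b_1$ on $\Sigma_1^r$ in the same way, and \eqref{W2+} gives $b_2=\det[W_2^r,W_2^\ell]$ on $\Sigma_1^\ell$.

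Analyticity of $a$ in $\C^+\setminus(\Sigma_1^\ell\cup\Sigma_1^r)$ then follows from the column analyticity stated after \eqref{eq:Jost_intro}. There $W_1^\ell$ is analytic off $\Sigma_1^\ell$ and $W_2^r$ is analytic off $\Sigma_1^r$. The Wronskian can be evaluated at any fixed $x$, say $x=0$, where both columns are analytic in $z$.

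The coefficients $b$, $b_1$, $b_2$ involve one column from each half-plane. They are therefore only defined where both columns have boundary values: $b$ on $\R$, $b_1$ on $\Sigma_1^r$, and $b_2$ on $\Sigma_1^\ell$.

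For the endpoint behaviour we would invoke the explicit structure of $W^s_0$, i.e. the elliptic Baker--Akhiezer representation. Its normalization involves $\big((z-z_1^s)(z-z_2^s)(z-\overline{z_1^s})(z-\overline{z_2^s})\big)^{-1/4}$-type factors. Proposition~\ref{propm} gives uniform bounds on $W^s(W^s_0)^{-1}$ near $\partial\Sigma$. Hence each Jost column has at most a quartic-root singularity there, and so do the Wronskians built from them.

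\emph{Part 2: large-$z$ asymptotics.}
We write $W^s_0(x;z)=\Phi^s(x;z)\e^{-\i z(x-x_0^s)\sigma_3}$ up to the normalization of the elliptic wave. The phase shift $x_0^s$ appears because the Baker--Akhiezer function is normalized at the phase $x_0^s$. Here $\Phi^s$ has an asymptotic expansion $I+\sum_k \Phi^s_k(x)z^{-k}$ obtained from the recursion for \eqref{e:zs}.

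Next we set $m^s=W^s\e^{\i z(x-x_0^s)\sigma_3}$ and substitute into the Volterra equation. For the first column of $m^\ell$ and $z\in\overline{\C^+}$, the off-diagonal kernel carries $\e^{2\i z(x-y)}$ with $y<x$, which is bounded. Integrating by parts once, using $\Delta U^\ell\in\mathcal{W}^{1,1}$, gives $m_1^\ell=e_1+\mathcal{O}(z^{-1})$ uniformly in $x$. The same argument gives $m_2^r=e_2+\mathcal{O}(z^{-1})$.

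Then
\[
a(z)=\det[m_1^\ell,m_2^r]\,\e^{-\i z(x-x_0^\ell)}\e^{\i z(x-x_0^r)}.
\]
This yields $a(z)\e^{\i z(x_0^r-x_0^\ell)}\cdot(\text{sign convention})=1+\mathcal{O}(z^{-1})$, which is \eqref{a+asympt} once the convention for $x_0$ in \eqref{e:qpdef} is tracked.

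For $b$ on $\R$ we have $b=\det[W_1^r,W_1^\ell]$, which carries an oscillatory factor $\e^{2\i zx}$ combined with the phases. We expand $b$ as an integral, e.g.
\[
b\,\e^{-\i(x_0^\ell-x_0^r)z}=\int_\R \e^{2\i z y}\,F(y;z)\,\d y
\]
with $F$ built from $\Delta U$ and $m$. We then integrate by parts four times. The $\mathcal{W}^{4,1}$ hypothesis controls the derivatives of $\Delta U$, and the $x$-derivatives of $m$ are estimated through the equation itself. Each integration gains $z^{-1}$, giving $\mathcal{O}(z^{-4})$.

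\emph{Main obstacle.}
The hardest step is this last one. The background $\Phi^s$ is only quasi-periodic, not decaying, so the boundary terms in the integration by parts do not vanish trivially. The leading diagonal terms from $\Phi^\ell$ and $\Phi^r$ must be shown to cancel the non-oscillatory contributions order by order. Our first attempt would be to conjugate each Jost solution by the full formal WKB expansion of its background up to order $z^{-4}$, so that the remaining kernel is $\Delta U$-dependent only and decays. The estimates would then reduce to those of the zero-background case.
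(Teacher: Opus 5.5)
Your item~1 and your proof of \eqref{a+asympt} follow the paper's route. You use Wronskians/Cramer's rule for \eqref{e:scoefs}, and analyticity and endpoint behaviour are inherited from Proposition~\ref{propm}. The Wronskians stay at worst quartic because the endpoints of $\Sigma^\ell$ and $\Sigma^r$ are distinct, so at most one factor is singular at each endpoint; you should say this. Then $a\,\mathrm{e}^{-\mathrm{i}(x_0^\ell-x_0^r)z}=\det[e_1+\mathcal{O}(z^{-1}),\,e_2+\mathcal{O}(z^{-1})]$ at a fixed $x$.

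Two small corrections there. First, the oscillating entry of the kernel in \eqref{e:Intm} is $\mathrm{e}^{2\mathrm{i}(p^\ell-E^\ell)(x-y)}$, not $\mathrm{e}^{2\mathrm{i}z(x-y)}$; it is bounded for $z\in\overline{\mathbb{C}^+}$ by Lemma~\ref{dp_inequality}. Second, your $m^s$ contains the factor $\mathrm{e}^{-\mathrm{i}(x-x_0^s)(p^s-E^s-z)\sigma_3}=I+\mathcal{O}(|x-x_0^s|/|z|)$, so the $\mathcal{O}(z^{-1})$ is not uniform in $x$. This is harmless, since $a$ does not depend on $x$.

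The genuine gap is \eqref{b+asympt}, which you leave at ``our first attempt would be''. A representation $b\,\mathrm{e}^{-\mathrm{i}(x_0^\ell-x_0^r)z}=\int_{\mathbb{R}}\mathrm{e}^{2\mathrm{i}zy}F(y;z)\,\mathrm{d}y$ with decaying $F$ is not available here. As $y\to+\infty$, $\Delta U^\ell=U-U_0^\ell$ is asymptotic to the non-decaying quasi-periodic difference $U_0^r-U_0^\ell$, so you cannot send $x\to+\infty$ in the left Volterra equation as in the zero-background case.

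Suppose instead you evaluate the Wronskian at a matching point $x_*$ and use each Volterra equation on its own half-line. You then get the non-oscillatory term $\det[W^r_{0,1}(x_*;z),W^\ell_{0,1}(x_*;z)]$ built from the first columns of the two background solutions. Its $z^{-1}$ coefficient is proportional to $\overline{u_0^\ell(x_*)}-\overline{u_0^r(x_*)}$, which is generically nonzero. Compare the pure-step example in the appendix, where $b=\frac12(\lambda-\lambda^{-1})$ decays only like $z^{-1}$. This term must be cancelled through order $z^{-3}$ by the boundary terms at $x_*$ that arise when you integrate by parts on both half-lines. That cancellation is the entire content of \eqref{b+asympt}, and your proposal does not establish it.

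Your proposed remedy does not supply it either. Conjugating each Jost solution by the WKB expansion of its own \emph{background} is exactly what the factor $O^s$ in \eqref{e:defm} already does. It leaves you with two expansions built on different backgrounds, with nonlocal coefficients, and nothing that forces them to match.

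The missing idea is locality of the large-$z$ expansion with respect to the \emph{actual} potential $u$. The paper expands both first columns at one fixed $x$ up to $\mathcal{O}(z^{-4})$; the expansion exists by Lemma~\ref{lem:iterates} under the $\mathcal{W}^{4,1}$ hypothesis. It then uses the Zakharov--Shabat Laurent recursion of Appendix~\ref{a:asymW}, in which the lower entries are determined locally by $u$ and the diagonal coefficients. Concretely, $\beta_1^\ell=\beta_1^r=u/(2\mathrm{i})$, $\beta_2^\ell-\beta_2^r=\beta_1(\overline{\alpha_1^\ell}-\overline{\alpha_1^r})$, and similarly for $\beta_3^\ell-\beta_3^r$. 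These identities make the $z^{-1},z^{-2},z^{-3}$ coefficients of the Wronskian cancel identically.

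Equivalently: $\mathrm{e}^{\mathrm{i}(x-x_0^\ell)z}W_1^\ell$ and $\mathrm{e}^{\mathrm{i}(x-x_0^r)z}W_1^r$ are formal solutions of the same equation $\partial_x w=-\mathrm{i}z(\sigma_3-I)w+Uw$ with the same leading term $e_1$. Such formal solutions are unique up to an $x$-independent scalar factor $1+c_1z^{-1}+\cdots$. Their Wronskian therefore vanishes to the order to which the expansions exist.
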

    The inverse scattering problem consists in reconstructing $u(x,t)$ from the associated scattering data, for step-like oscillatory initial data $u(x,t=0)$  of the form \eqref{initial_data}.
     For simplicity, we assume that $a(z)\neq 0$  for $z\in \C^+\backslash\{ \Sigma_1^r\cup \Sigma_1^\ell\backslash\{\partial \Sigma^r\cup \partial\Sigma^\ell\}\}$, so the initial data  don't have  solitons or breathers.
     Further we introduce three reflection coefficients \begin{itemize}
         \item $r_1(z)$  defined for $z\in\Sigma_1^\ell$  (see equation \eqref{e:defr1}),
     \item $r_2(z)$  defined for $z\in\Sigma_1^r$ (see equation \eqref{e:defr2}),
     \item  $\rho(z)$ defined for $z\in\R$  (see equation \eqref{e:rho}). 
     \end{itemize}
     These coefficients are obtained in a nontrivial way from the factorization of the coefficient $a(z)=a_1(z)a_2(z)$. In particular 
     \begin{equation}
     \label{rho_intro}
     \rho(z)=\dfrac{b(z)}{a_1(z)a_2^*(z)}\e^{-2\i x_0^rz}\,,
     \end{equation}
     where $x_0^r$ is the phase shift of the right elliptic wave $u_0^r(x)$ at $t=0$ and $*$  denotes the Schwarz reflection defined by $a_2^*(z):=\ov{a_2(\ov{z})}$.
     With these coefficients we define the matrix $V(z)$, $z\in\R\cup\Sigma^r\cup\Sigma^\ell$\,,
       \be\label{e:Vtilde}
		V(z)=\begin{cases}
			\begin{pmatrix}
				\frac{1-r_1(z)r_2(z)}{1+r_1(z)r_2(z)} & \frac{2 \mathrm{i} r_2(z)}{1+r_1(z)r_2(z)}\e^{-2\theta(z;x,t)} \vspace{0.5ex}
				\\
			\frac{2\mathrm{i} r_1(z)}{1+r_1(z)r_2(z)}\e^{2\theta(z;x,t)} & 	\frac{1-r_1(z)r_2(z)}{1+r_1(z)r_2(z)} 
			\end{pmatrix}, & z\in\Sigma^r_1\cap\Sigma^\ell_1,
			\vspace{6pt}
			\\
			\begin{pmatrix}
				1 & 0 \vspace{0.5ex}\\
				2\mathrm{i} r_1(z)\e^{2\theta(z;x,t)} & 1
			\end{pmatrix}, & z\in\Sigma^\ell_1\setminus\Sigma^r_1,
			\vspace{6pt}
			\\
			\begin{pmatrix}
				1 &  2\mathrm{i} r_2(z)\e^{-2\theta(z;x,t)} \vspace{0.5ex}\\
				0 & 1
			\end{pmatrix}, & z\in\Sigma^r_1\setminus\Sigma^\ell_1,
			\vspace{6pt}
			\\
			\begin{pmatrix}
				1+|\rho|^2 & \rho^*(z)\e^{-2\theta(z;x,t)} \vspace{0.5ex}\\
				\rho(z)\e^{2\theta(z;x,t)} & 1
			\end{pmatrix}, & z\in \Real,
					\end{cases}
		\ee
where $\theta(z;x,t)=\i(zx+z^2t)$	and the expression of $V(z;x,t)$ for $z\in\C^-$ is obtained by the symmetry
        \[
        \ov{V(\ov{z};x,t)}=\sigma_2V(z;x,t)\sigma_2,
        \]
        with $\sigma_2=\begin{pmatrix}0&-\i\\\i&0\end{pmatrix}$.
The inverse problem is formulated as a Riemann--Hilbert problem.
\begin{RHP}\label{RHP:Phi}
	Find a $2\times2$ matrix-valued function $\Phi(z;x,t)$ which satisfies the following conditions:
	\begin{enumerate}
		\item $\Phi(z;x,t)$ is analytic in $\Complex\setminus (\Real\cup\Sigma^r\cup\Sigma^\ell)$\,.
		\item $\Phi(z;x,t)=I+\mathcal{O}(z^{-1})$, as $z\to\infty$\,.
		\item $\overline{\Phi(\overline{z};x,t)}=\sigma_2 \Phi(z;x,t)\sigma_2$\,.
		\item $\Phi(z;x,t)$ satisfies the jump condition $\Phi(z_+;x,t)=\Phi(z_-;x,t)V(z)$\,,  for  $ z\in\R\cup\Sigma^\ell\cup\Sigma^r$, where $V$ is defined in \eqref{e:Vtilde}.
		      
	\end{enumerate}
\end{RHP}
\begin{remark}
When $\rho=0$, the Riemann--Hilbert problem \ref{RHP:Phi}   can be obtained from a soliton gas limit as in \cite{GJM},\cite{GJZZ},   where it  is referred to as  the "full soliton gas".
In particular, when we also have  $r_2=0$, such a Riemann--Hilbert problem appears in \cite{BGO},  describing an infinite number of solitons accumulating on an ellipse with analytic density. 
When $\rho\neq 0$, the above Riemann--Hilbert problem can be considered as a full soliton gas Riemann--Hilbert problem on a background.  We observe that  the reflection coefficients $r_1(z)$ and $r_2(z)$ for the full gas problem can be taken from a larger functional space than those obtained in the present manuscript.

\end{remark}
The next theorem solves the  inverse problem.
\begin{restatable}{theorem}{InverseProblem}
%\begin{theorem}
\label{theorem2}
Given functions $\rho(z)\in L^{2,2} (\Real)\cap L^{1,2}(\Real)$, $r_1(z)\in L^2(\Sigma^\ell)$, and $r_2(z)\in L^2(\Sigma^r)$, the Riemann--Hilbert problem \ref{RHP:Phi} is uniquely solvable for all $(x,t)\in\Real^2$. The solution of focusing NLS equation  $u(x,t)$ is obtained from the solution  $\Phi(z;x,t)$ of the Riemann--Hilbert problem \ref{RHP:Phi} by
	\be\label{e:recons}
u(x,t)=2\mathrm{i}\lim_{z\to\infty} z \Phi_{12}(z;x,t).
	\ee
Moreover, the function $u(x,t)$ defined in \eqref{e:recons} is  in $C^2(\R)\times C^1(\R^+)$.
%\end{theorem}
\end{restatable}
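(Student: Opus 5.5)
The plan follows Zhou's vanishing lemma approach for Riemann--Hilbert problems with Schwarz-reflection-invariant contours, adapted to the extra arcs $\Sigma^\ell\cup\Sigma^r$. We first normalize so that all jumps are given on a contour $\Gamma=\R\cup\Sigma^\ell\cup\Sigma^r\cup\overline{\Sigma^\ell}\cup\overline{\Sigma^r}$, with $\Gamma=\overline{\Gamma}$ as a set and orientation reversed by conjugation. On $\Sigma^r_1\cap\Sigma^\ell_1$ we note that $\det V=1$ because
\[
\frac{(1-r_1r_2)^2+4r_1r_2}{(1+r_1r_2)^2}=1 .
\]
Since $\theta$ is purely imaginary only on $\R$, the factors $\e^{\pm2\theta}$ are bounded on the compact arcs $\Sigma$ for each fixed $(x,t)$. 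Hence the jump matrices satisfy $V-I\in L^2(\Gamma)\cap L^\infty(\Gamma)$; on $\R$ this uses $\rho\in L^{2,2}\cap L^{1,2}$. We also need $1+r_1r_2$ bounded away from $0$; this should follow from how $r_1,r_2$ are built from $a=a_1a_2$ and the no-zero assumption on $a$, and we take it as given. By standard $L^2$ theory (Beals--Coifman), solvability of RHP~\ref{RHP:Phi} is then equivalent to invertibility of $I-C_V$ on $L^2(\Gamma)$, where $C_V f=C_-(f(V-I))$ after a factorization $V=(I-w_-)^{-1}(I+w_+)$.

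To prove invertibility, we first show that $I-C_V$ is Fredholm of index zero. The jumps are continuous along each arc, except at the finitely many endpoints $\partial\Sigma$ and at the intersection points of $\Sigma^\ell$ with $\Sigma^r$. At these points we use local triangular factorizations together with the fact that the $L^2$ Cauchy operator is bounded on a finite union of Lipschitz arcs. Index zero follows by deforming $V\mapsto V_\epsilon$, scaling $r_1,r_2,\rho$ by $\epsilon\in[0,1]$, to the identity jump, where the operator is invertible. Each member of this family must remain Fredholm, so we need $1+\epsilon^2 r_1r_2\neq0$ along the deformation. If that fails, we would instead conjugate the overlapping piece by a lower-upper factorization and treat it as two triangular jumps on nearby parallel arcs.

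It then suffices to show the kernel is trivial, which is the vanishing lemma. Suppose $\Phi_0$ solves the RHP with $\Phi_0=\mathcal{O}(z^{-1})$ at infinity. Set
\[
H(z)=\Phi_0(z)\,\Phi_0(\bar z)^{\dagger},
\]
which is analytic off $\Gamma$ and $\mathcal{O}(z^{-2})$ at infinity. For the upper half-plane arcs $\Sigma_1$, integrate $H$ over a contour enclosing the region between $\R$ and the arcs, and use the symmetry $\overline{V(\bar z)}=\sigma_2V\sigma_2$. This moves the $\Sigma_1$ jumps onto their reflections, and Cauchy's theorem gives
\[
\int_\R \Phi_{0,-}\,\bigl(V+V^\dagger\bigr)\,\Phi_{0,-}^\dagger\,\d z=0 .
\]
On $\R$ the matrix $V$ is Hermitian positive definite, since $V_{11}=1+|\rho|^2$ and $\det V=1$. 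Hence $\Phi_{0,-}=0$ on $\R$, and so $\Phi_{0,\pm}=0$ on $\R$.

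Then $\Phi_0$ is analytic across $\R$. In $\C^+$ it is a sectionally analytic function whose only jumps lie on the compact arcs $\Sigma_1^\ell\cup\Sigma_1^r$, and it vanishes on $\R$, hence it vanishes identically in the unbounded component. The same then follows in the bounded components by propagating across arcs where the jump is triangular: a boundary value zero on one side forces zero on the other side. The overlap piece also propagates, since it has $\det V=1$. Conjugate symmetry then gives $\Phi_0\equiv0$.

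The step I expect to be the main obstacle is handling $\Sigma_1^\ell\cup\Sigma_1^r$ in the vanishing lemma. These jumps are not Schwarz-symmetric on themselves, so the positivity argument only works on $\R$, and the propagation argument needs care at the points where the arcs meet, where $\Phi_0$ may have $L^2$ singularities; there we will use that the $L^2$ boundary values vanish a.e. Finally, the reconstruction formula \eqref{e:recons} and the NLS equation come from the standard dressing argument: $\Psi=\Phi\e^{-\theta\sigma_3}$ has $(x,t)$-independent jumps, so $\Psi_x\Psi^{-1}$ and $\Psi_t\Psi^{-1}$ are polynomials in $z$ by Liouville, giving the Lax pair \eqref{e:NLSLP}. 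For the regularity $C^2$ in $x$ and $C^1$ in $t$, we differentiate the singular integral equation. Each $x$-derivative brings down a factor $z$ on $\R$ (on the compact arcs this is harmless), and each $t$-derivative a factor $z^2$. Two $x$-derivatives or one $t$-derivative of the jump on $\R$ therefore need $z^2\rho\in L^2\cap L^1$, which is exactly the weight $\rho\in L^{2,2}\cap L^{1,2}$ assumed.
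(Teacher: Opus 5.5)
Your proposal follows essentially the same route as the paper's proof. Both use the $L^2$ singular integral equation $(I-\mathcal{C}_{V-I})\mu=I$; Zhou's vanishing lemma with $H(z)=\Phi_0(z)\Phi_0(\bar z)^\dagger$, where the $\Sigma_1$ contributions cancel by the Schwarz symmetry of $V$ together with $\det V\equiv1$ and positivity of $V$ on $\R$ forces $\Phi_{0,-}=0$; the dressing argument for the Lax pair; and $C^2$/$C^1$ regularity obtained by differentiating the integral equation using $\langle z\rangle^2\rho\in L^1\cap L^2$.

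The one addition is your explicit index-zero homotopy, which the paper does not attempt: it relies directly on Zhou's framework. Since scaling $r_1,r_2$ by $\epsilon$ can pass through $1+\epsilon^2 r_1r_2=0$, it is cleaner to take the index-zero property from Zhou's Fredholm theory for unimodular jumps, as the paper implicitly does, rather than from that deformation.
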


This manuscript is organized as follows: in section~\ref{elliptic} we derive  the elliptic travelling wave $u_0(x,t)$  of the NLS equations and the solution $W_0(x,t;z)$ of the Lax pair \eqref{e:NLSLP}. We then formulate a Riemann--Hilbert problem for $W_0(x,t;z)$.
In section~\ref{perturbative} we derive the Jost solution for the step-like oscillatory initial data  \eqref{initial_data} and we   prove theorem\ref{theorem1}.
In section~\ref{inverse},  we formulate  the inverse problem,  prove its solvability and  prove Theorem~\ref{theorem2}.  We put in the Appendix the most technical parts of the proofs  of our results as well as an explicit  example with a step-oscillatory initial data 
of the form \eqref{initial_data}.

      \section{Elliptic travelling waves\label{elliptic}} 
      In this section we review the theory of elliptic travelling wave for the NLS equation and the corresponding spectral curve.
      We need to solve equation \eqref{e:snls}, namely
\[
 \frac{1}{2} \psi_{xx}(x) + \omega_0 \psi(x) +  |\psi|^2 \psi(x) \,=\, 0~, \quad
  x \in \R~.
\]
The general solution can be written as  $\psi(x,t)=\phi(x) \e^{\i\theta(x)},$ where $\phi$ and $\theta $ that are obtained from the  integrals of the differential identities
\[
\dfrac{\phi \d\phi}{\sqrt{-(\phi^6+2\omega_0 \phi^4+\delta_1\phi^2+\delta^2)}}=\d x,\quad\theta'= \dfrac{\delta}{\phi^2},
\]
where $\delta$ and $\delta_1$ are integration constants.
The solution  is   parametrized by  the  three real zeros $e_1<e_2<e_3$ of the polynomial  $f^3+2\omega_0 f^2+\delta_1f+\delta^2=0$:
\begin{subequations}
\label{E:defs}
\begin{eqnarray}
\phi^2(x)&=&e_3-(e_3-e_2) \,\mbox{sn}^2(\sqrt{e_3-e_1}x;k),\quad k =\frac{e_3-e_2}{e_3-e_1},
\label{E:phi_def} \\
\theta(x)&=& \displaystyle{\delta\int_0^x\phi^{-2}(\xi)\d\xi,} \label{E:theta_def} \\
\delta^2&=&-e_1e_2e_3,\quad \omega_0=-\frac{1}{2}(e_1+e_2+e_3),\;\; \delta_1=e_1e_2+e_1e_3+e_2e_3.\quad  \label{E:c_def}
\end{eqnarray}
\end{subequations}
Here $k \in [0,1]$ is the elliptic modulus of the 
Jacobi elliptic sine function, $\sn(x;k)$ (see e.g. \cite{Lawden}).     
The function $\sn^2(x;k)$ is periodic if $k\in[0,1)$, with 
period given by $L = 2K$, where $K=K(k)$ is  the  complete elliptic integral of the first kind defined by 
\begin{equation}
K(k)=\int_{0}^{\pi/2}\left(1-k\sin^2 \xi \right)^{-1/2} \,\d\xi.
\end{equation}
When $k=0$, $\sn^2(x;0) = \sin^2(x)$ with $L =  \pi$.  As 
$k$ approaches 1, $\sn(x,k)$ approaches $\tanh(x)$ and $L$ approaches 
infinity~\cite{Lawden}. Although $\phi(x)$ inherits the periodicity of $\sn(x,k)$, the solution 
$\psi(x,t)$ is
typically {\em not} $L$-periodic in the  space variable  $x$  because the periods 
of $\e^{\i\theta}$ and $\phi$ are typically incommensurate. 
 Note that $\delta^2\geq 0$, which implies that $e_1\leq 0<e_2<e_3$ or $e_1<0\leq e_2<e_3$.  When $e_1=0$ or $e_2=0$, we have  $\theta=0$,  and in this case the travelling wave is $x$-periodic:
\begin{equation}
\label{eq:dn_cn}
\begin{split}
&e_1=0,\quad  \psi(x) \,=\,\phi(x) = \sqrt{e_3}\mbox{dn}(\sqrt{e_3}x;k)\,,\\
&e_2=0,\quad  \psi(x) \,=\,\phi(x) = \sqrt{e_3}\mbox{cn}(\sqrt{e_3-e_1}x;k),
\end{split}
\end{equation}
where $\mbox{dn}(x;k)$ and $\mbox{cn}(x;k)$ are the Jacobi elliptic functions.
When we consider a solution of \eqref{e:qpdef}  with $v=0$, namely 
$$ u_ 0(x, t,v=0)= \e^{-\i\omega_0 t}\psi(x),$$  then the fundamental matrix solution   of the ZS  spectral problem is of the form 
\begin{equation}
\label{def_Phi}
\hat{\chi}(x,t;z)=\e^{-\frac{\i}{2}\omega_0t\sigma _3}\chi(x;z)\e^{-\i R t \sigma_3},
\end{equation}
for a $2\times 2$ matrix $\chi(x;z)$ with time-independent entries
so that one obtains the equation 
\[
(\mathcal{B}(\psi;z)+\frac{\i}{2}\omega_0\sigma_3)\chi=\i R\chi\sigma_3\,,
\]
where $\mathcal{B}(\psi,z)$ is the ZS linear operator with respect to the potential $\psi(x)$ that satisfies equation \eqref{e:snls}.
 The above equation 
has a non-trivial solution only when $$R^2=\det(\mathcal{B}+\frac{\i}{2}\omega_0\sigma_3)=z^4-z^2\omega_0-\delta z+\frac{\omega_0^2-\delta_1}{4},$$ where   the constants of  integration  $\delta$ and $\delta_1$ are  defined in \eqref{E:c_def} and coincide with the quantities
\[
\delta=\frac{\i}{2}(\psi\overline{\psi}_x-\psi_x\overline{\psi}),\quad \delta_1=-|\psi|^4-2\omega_0|\psi|^2-|\psi_x|^2.
\]
The   above equation gives the spectral curve of the elliptic solution and it is  a Riemann surface of genus one, which  we write in the form 
\be
\label{RS0}
\mathcal{X}=\left\{(z,R)\in\mathbb{C}^2: R^2=(z-z_1)(z-\overline{z_1})(z-z_2)(z-\overline{z_2})\right\}\,,
\ee
where $z_1$ and $z_2$ are related to $e_1,e_2,$ and $e_3$ defined in \eqref{E:c_def}  by the relation
 \[
 z_2= \frac{1}{2}\sqrt{-e_1}+ \frac{\i}{2}\left(\sqrt{e_3}+\sqrt{e_2}\right),\quad   z_1=-\frac{1}{2}\sqrt{-e_1}+ \frac{\i}{2}\left(\sqrt{e_3}-\sqrt{e_2}\right).
 \]
 if $\delta<0$,  while 
  \[
 z_2= \frac{1}{2}\sqrt{-e_1}+ \frac{\i}{2}\left(\sqrt{e_3}-\sqrt{e_2}\right),\quad   z_1=-\frac{1}{2}\sqrt{-e_1}+ \frac{\i}{2}\left(\sqrt{e_3}+\sqrt{e_2}\right)\,,
 \]
 if $\delta>0$. Below we assume, for simplicity, that we are in the former case, namely $\Re(z_2)>\Re(z_1)$ and $\Im(z_2)>\Im(z_1).$
 We also observe that 
 \begin{equation}
 \label{omega0}
 -\omega_0=z_1(z_2+\overline{z_1}+\overline{z_2})+z_2(\overline{z_1}+\overline{z_2})+\overline{z_1}\overline{z_2}.
 \end{equation}
From the   stationary elliptic solution $u_0(x,t,v=0)= \e^{-\i \omega_0 t}\psi(x),$ with zero velocity, the solution with velocity $v$ is obtained by
using Galilean invariance,   namely 
\begin{equation}\label{travelling}
  u_0(x,t,v) \,=\, \e^{-\i \omega_0 t}\,\e^{\i  \big(vx - \frac{v^2}2t\big)}\psi(x-vt),
  \quad x \in \R~, \quad t \in \R^+~.
\end{equation}
               The corresponding eigenfunction takes  the form
        \[
        W_0( x,t;z)=\e^{\frac{\i}{2}(vx-\frac{v^2}{2}t)}\hat{\chi}(x-vt,t; z+\frac{v}{2})\,,
               \]
        where $\hat{\chi}(x,t;z)$ has been defined in \eqref{def_Phi}.
The above equation shows that the  fundamental matrix solution of the ZS spectral problem  for  the elliptic travelling wave  with velocity $v$ is obtained from the zero-velocity fundamental matrix solution by the spectral shift $z\to z+\frac{v}{2}$.
Since we develop inverse scattering for initial data that are asymptotic,  as $x\to\pm \infty$, to two different elliptic quasi-periodic waves, Galilean invariance allows us to set only one of the two velocities to zero. For generality, we therefore consider initial data asymptotic to two elliptic periodic travelling waves whose velocities are not assumed to vanish.

In the next sub-section we describe the general elliptic wave  $u_0(x,t)$ with non-zero velocity  and the associated fundamental matrix solution  $W_0(x,t;z)$ of the Lax pair   obtained by solving a Riemann--Hilbert problem. 
\subsection{Genus one  Riemann--Hilbert problem}
\begin{figure}[t]
	\centering
	\includegraphics[width=0.3
	\linewidth]{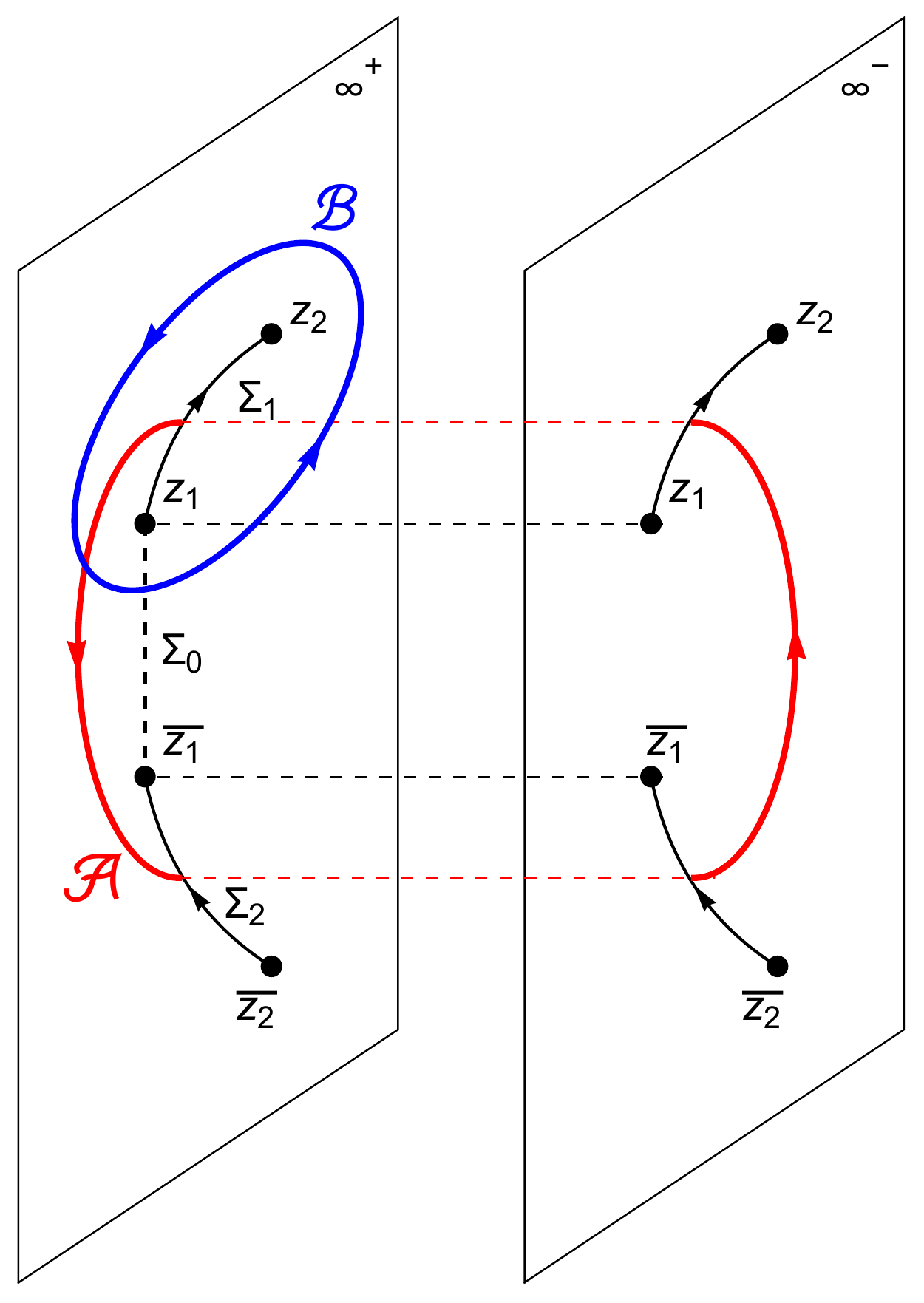}
	\caption{The homology basis for the Riemann surface $\mathcal{X}$ associated with $R^2=(z-z_1)(z-\overline{z_1})(z-z_2)(z-\overline{z_2})$.}
	\label{surf1}
\end{figure}
%\todo{plot $\Sigma_0$}

We consider the two-sheeted, genus-one Riemann surface
\be
\label{RS_X}
\mathcal{X}=\left\{(z,R)\in\mathbb{C}^2: R^2=(z-z_1)(z-\overline{z_1})(z-z_2)(z-\overline{z_2})\right\}\,,
\ee
where  $$z_j=\xi_j+\i\eta_j,\;\; j=1,2,\quad  0<\eta_1<\eta_2.$$ 
We define
\[
v=-\Re(z_1+z_2)=  -(\xi_1+\xi_2)  \,,
\]
because this quantity is the velocity of the travelling wave.
 The first sheet of $\mathcal{X}$ is identified by the fact that $R>0$ for $\Re(z)>0$. Denote by $\infty^+$ ($\infty^-$) the pre-image of $z = \infty$ on the first (second) sheet of $\mathcal{X}$.

The function $\sqrt{(z-z_1)(z-\overline{z_1})(z-z_2)(z-\overline{z_2})}$  is a multi-valued function on $\C$   with oriented branch cuts $\Sigma_1$ and $\Sigma_2$ as in Figure \ref{surf1}.
Fix a canonical homology basis on $\mathcal{X}$ by choosing $\mathcal{B}$ to encircle $\Sigma_1$ anticlockwise  on the first sheet, and $\mathcal{A}$ to pass from the positive side of $\Sigma_1$ to $\Sigma_2$ on sheet 1 and from the negative side of $\Sigma_2$ to $\Sigma_1$ on sheet 2, see Figure \ref{surf1}.
Notice that the surface $\mathcal{X}$ has an anti-holomorphic involution $\sigma: \mathcal{X}\to \mathcal{X}$ given by
\[
\sigma(z,R)=(\overline{z},\overline{R}),
\]
and the canonical homology basis satisfies
\begin{equation}
	\label{SymAB}
	\sigma(\mathcal{A})=-\mathcal{A},\quad \sigma(\mathcal{B})=\mathcal{B}.
\end{equation}
 Denote by $\d p$ and $\d q$ the quasi-momentum and quasi-energy differentials respectively. These are defined to be the unique meromorphic differentials of the second kind characterized by the following asymptotic and normalization conditions:
\bse
\begin{gather}
 \d p=\pm[1+\mathcal{O}(z^{-2})]\d z,\quad \mbox{as $z\to\infty^\pm$}, \;\quad  \mbox{and \;\;\;$ \oint\limits_{\mathcal{A}}\d p=0$} \,,\\
 \d q=\pm[2z+\mathcal{O}(z^{-2})]\d z,\quad \mbox{as $z\to\infty^\pm$}, \;\quad \mbox{and \;\;\;$\oint\limits_{\mathcal{A}}\d q=0$} \,.
\end{gather}
\ese
Moreover, $\d p$ and $\d q$ admit the explicit representations
\bse\label{p}
\begin{gather}
\d p=\frac{z^2-\Re(z_1+z_2)z+c_0}{R}\d z,\\\quad
%=[1+\mathcal{O}(z^{-2})]\d z,\\
\d q=2\frac{z^3-\Re(z_1+z_2)z^2+(\Re(z_1)\Re(z_2)+\frac{\
\Im(z_1)^2+\Im(z_2)^2 }{2})z+c_1}{R}\d z\,,
%=[2z+\mathcal{O}(z^{-2})]\d z,
\end{gather}
\ese
with constants $c_0$ and $c_1$ determined by the $\mathcal{A}$-normalization
\[
\oint_{\mathcal{A}}\d p=0,\quad \oint_{\mathcal{A}}\d q=0.\quad
\]
In particular the first integral gives
\begin{equation}\label{c0}
c_0=-\dfrac{ \oint_{\mathcal{A}}\frac{z^2-(\xi_1+\xi_2)z}{R}\d z}{\oint_{\mathcal{A}}\frac{\d z}{R}}=\frac{1}{2}(|z_1|^2+|z_2|^2+|z_1-z_2||z_1-\overline{z_2}|)-\frac{1}{4} \dfrac{E(\tilde{m})}{K(\tilde{m})}(|z_1-z_2|+|z_1-\overline{z_2}|)^2\,,
\end{equation}
where 
$$\tilde{m}=\left(\dfrac{|z_1-\overline{z_2}|-|z_1-z_2|}{|z_1-\overline{z_2}|+|z_1-z_2|}\right)^2.$$
Introducing the Landen transformation
\begin{align}\label{m}
m=\dfrac{4 \sqrt{\tilde{m}}}{(1+\sqrt{\tilde{m}})^2}=1-\left|\frac{z_1-z_2}{z_1-\overline{z_2}}\right|^2\,,
\end{align}
and 
\[
E(m)=\dfrac{2}{1+\sqrt{\tilde{m}}}(E(\tilde{m})-\frac{1-\tilde{m}}{2}K(\tilde{m})),\quad K(m)=K(\tilde{m})(1+\sqrt{\tilde{m}}),
\]
we obtain an expression  for  $c_0$ in the form 
\begin{align}
c_0=\dfrac{1}{2}(|z_1|^2+|z_2|^2)-\frac{1}{2}|z_1-\overline{z_2}|^2\dfrac{E(m)}{K(m)}.
\end{align}

We observe that using the Riemann bilinear relations one obtains
\begin{equation}
\label{c1}
 c_1=\dfrac{c_0}{2}\Re(z_1+z_2)-\frac{1}{2}(\Re(z_1)|z_2|^2+\Re(z_2)|z_1|^2), 
\end{equation}
so that one can rewrite $\d q$ in the form 
\begin{equation}
\label{dqR}
\d q=-v \d p+\dfrac{\d}{\d z}R(z).
\end{equation}
%Note that the anti-holomorphic involution $\sigma$ preserves the homology basis. Hence its pullback satisfies $\sigma^*(\d p)=\overline{\d p}$ and $\sigma^*(\d q)=\overline{\d q}$. Therefore, $c_0$ and $c_1$ are real.
The quasi-momentum $\d p$ and quasi-energy $\d q$ differentials have $\mathcal{B}$-periods that are computable from the Riemann bilinear identities

\be
\label{Omega12}
 \Omega_1 :=  \oint_\mathcal{B}\d p = 4\pi  |c|,\quad \Omega_2 := \frac{1}{2\pi }\oint_\mathcal{B}\d q=- 4\pi  v |c| \,,
\ee
where 
\be\label{e:defc}
c=\left(\oint_\mathcal{A}\frac{\d z}{R(z)}\right)^{-1}=-\dfrac{\i|z_1-\overline{z_2}|}{2K(m)},\quad m=1-\left|\frac{z_1-z_2}{z_1-\overline{z_2}}\right|^2\,.
\ee
Note that  $\Omega_1$ and $\Omega_2$ are real.

Next we fix a base point $z_2$ on the first sheet of the Riemann surface and define the Abelian integrals   
$p(z)$ and $q(z)$   by
\be\label{e:defAbelian}
p(z)=\int_{z_2}^z \d p,\quad
q(z)=\int_{z_2}^z \d q.
\ee
The expansion for $z\to\infty^+$ gives \cite{Belokolos1994} 
\be\label{e:pqasymp}
p(z)=z+E+ \frac{\pi_1}{z}+ o(z^{-1}),\quad q(z)=z^2 +N +o(1),\quad z\to\infty^+,
\ee
where 
\begin{equation}\label{e:defEN}
\begin{split}
&E:=\lim_{z\to\infty}(p(z)-z)=\lim_{z\to\infty}\int_{z_2}^z[\d p(\lambda)-\d\lambda]-z_2\,,\\
&N:=\lim_{z\to\infty}q(z)-z^2=-vE+\lim_{z\to\infty}(R(z)-z^2-vz)=-vE+\dfrac{v^2}{4}-\dfrac{\omega_0}{2},\\
&\pi_1=-\frac{1}{2}(\Re(z_1-z_2))^2+\frac{1}{2}|z_1-\overline{z_2}|^2\dfrac{E(m)}{K(m)}.
\end{split}
\end{equation}
Then the fact that $E$ is real follows from the  symmetry  of the curve and the homology basis.

Now we are ready to give a more precise definition of the spectrum $\Sigma$ and $\Sigma_1$ (and $\Sigma_2$). Indeed we will show in Proposition~\ref{prop:W0}  that the fundamental solution of the ZS linear spectral problem \eqref{e:zs} for the  elliptic  initial data $u_0$ as in \eqref{initial_data} is given by the matrix 
\[
W_0(x,t;z)=O(z;x,t)\e^{-\i\left((x-x_0)(p(z)-E)+t(q(z)-N)\right)\sigma_3}\,,
\]
where $O(z;x,t)$ is a bounded matrix for $z\in\C\backslash\{ z_1,z_2,\ov{z_1},\ov{z_2}\}$ and for all $x\in\R$.
It follows that $W_0(x,t;z)$
is bounded for all $x\in\R$,  whenever   $\Im(p(z))=0$, namely 
\begin{equation}
    \label{spectrum1}
    \Sigma\cup\R=\left\{z\in\C\,|\,\Im(p(z))=0\right\}.
\end{equation}
By construction $p(z_2)=0$, and $\Im (p(z_1))=0$ since $p(z_1)$ is the real half $\mathcal{B}$-period of $\d p$ (see \eqref{Omega12}). The condition $\int_{\mathcal{A}}dp=0$ and Schwarz symmetries of $\d p$ and $\mathcal{A}$ imply that $\Im(p(\ov{z_1}))=0$, $\Im(p(\ov{z_2}))=0$, and $\Im(p(z))=0$ for any $z \in \R$. Thus $\Sigma$ always contains the points $z_1,z_2, \ov{z_1}$, $\ov{z_2}$, and the real line. 

Let us look more in detail at the level set $\Im p(z) = 0$ off the real line. 
Each analytic arc of the level set $\Im p(z) = 0$ admits a
parametrization $z = z(\tau ),$ where $\tau\in\mathbb{R},$ with $|z'(\tau)| = 1.$  The function $p(z(\tau ))$ equals
\begin{equation*}
 p(z(\tau)) = p(z(\tau_0))+\int^{\tau}_{\tau_0} \d p(z(\sigma)) z'(\sigma) \d \sigma.
\end{equation*} 
Since $\Im p(z(s)) = 0,$ we have 
\begin{equation*}
 z'(\tau) = \dfrac{\ov{ \d p(z(\tau))}}{|\d p(z(\tau))|}.
\end{equation*}
This immediately gives us the direction of the level line for every point $z\in\Sigma$ and there is exactly one line passing through every regular point where $\d p(z)\neq 0$.
In addition to regular points, there are singular points, where $\d p(z)$ is either 0 or infinite.  
Let us first consider the latter case. At each branch  points of the elliptic curve $p(z)$ has a zero of order $1/2$. For example, near $z=z_2$
\begin{equation*}
 \Im(p(z))\simeq 2\Im\left(\dfrac{z_2^2-\Re(z_1+z_2)z_2+c_0}{\sqrt{(z_2-z_1)(z_2-\ov{z_1})(z_2-\ov{z_2})} }(z-z_2)^{\frac{1}{2}}\right)=\Im\left(a_0\e^{\i \varphi_0}(z-z_2)^{\frac{1}{2}}\right)
\end{equation*}
for some constants $a_0>0$ and $\varphi_0 \in \Real$.
So arcs of $\Im p(z)=0$ emerge from $z=z_2$ at angles $-2\varphi_0+2\pi n$, with $n\in\Z$. It follows that only one arc emerges from $z_2$. The same considerations hold for all the other branch points. The other singularity of $p$ is at infinity where its local behavior is given by \eqref{e:pqasymp}. It follows that exactly two arcs of $\Im(p(z))$ emerge from the point at infinity, but these are precisely the two side of the real line already accounted for. 

Regarding the two zeros of $\d p$, there are two distinct possibilities to consider: either the zeros are real or complex conjugates. Let $\nu_1$ and $\nu_2$  be the zeros  of $\d p $. If they are real, then $\Im(p(\nu_j))=0$, $j=1,2$  and  for $z$ near  $\nu_1$ we have  
\begin{equation*}
 \Im(p(z))\simeq \Im\left(\dfrac{\nu_1-\nu_2 }{2|\nu_1-z_1||\nu_1-z_2|}  (z-\nu_1)^2 \right)=\Im\left(a_0(z-\nu_1)^2\right)\,,
 \end{equation*}
where $a_0 \in \Real$. It follows that $\Im(p(z))=0 $ along arcs that emerge from $\nu_1$ at angles $\arg(z-\nu_1)=\frac{n\pi}{2}$, with $n\in\Z$. Specifically, four arcs emerge from $\nu_1$, two along the real axis and two which leave $\nu_1$ orthogonal to the real line. The same consideration applies to the point $\nu_2$.
So with real roots of $\d p$, the only possibile topological configuration of the set $\Sigma$ consists of two arcs $\Sigma_1$ and $\Sigma_2$ where $\Sigma_1$  starts at $z_1$ passed through $\nu_1$  and   ends at $\ov{z_1}$; similarly, 
$\Sigma_2$ starts at $z_2$ passed through $\nu_2$  and   ends at  $\ov{z_2}$  (see Figure~\ref{f:Sigma}  (b)).
When the two zeros of $\d p$ are complex, namely $\nu_2=\ov{\nu_1}$,
it is simple to argue that these two zeros cannot belong to the level set $\Im(p(z))=0$.  Indeed on the contrary, there would be four lines coming out of $\nu_1$, and assuming that $\Im(\nu_1)>0$ two of these lines can be connected to $z_1$ and $z_2$, while the other two  lines 
have to form a loop. In this case  $\Im(p(z))$   would be a harmonic function in a simply connected domain with zero boundary value, therefore it would be identically zero in the domain within the loop. But this is impossible.  We conclude that  $\Sigma$ consists of two paths:  the path $\Sigma_1$  that connects $z_1$ to $z_2$  and the path   $\Sigma_2$  that connects $\ov{z_1}$ to $\ov{z_2}$  (see Figure~\ref{f:Sigma} (a)). For a general spectral configuration of finite gap NLS potential see \cite{BT2026}.

\begin{remark}
The existence of two real zeros of the differential $\d p$ defined by \eqref{p} implies that the spectrum of the ZS operator crosses the real line as in Figure~\ref{fig:spectralcurves}$(b)$.  
The differential $\d p$ has two real zeros if 
\begin{gather}
\label{real_spectrum}
%c_0\leq 0 \;\Longrightarrow  
1-m + s - 2\dfrac{E(m)}{K(m)} < 0, \\
s = \frac{ \Im (z_1 -\bar z_2)^2}{|z_1 - \bar z_2|^2} = \sin^2(\arg(z_1 - \bar z_2))\in[0,1]. \nonumber
\end{gather}
If the left hand side of \eqref{real_spectrum} is positive, the spectrum of the ZS operator has no real intersections as in Figure~\ref{fig:spectralcurves}$(a)$.
The set of values $(m,s)$ where \eqref{real_spectrum} is satisfied is shown in blue in Figure~\ref{f:spectral_parameters}.
\end{remark}
\begin{figure}[htb]
	\centering
	\begin{overpic}[width=.4\textwidth]{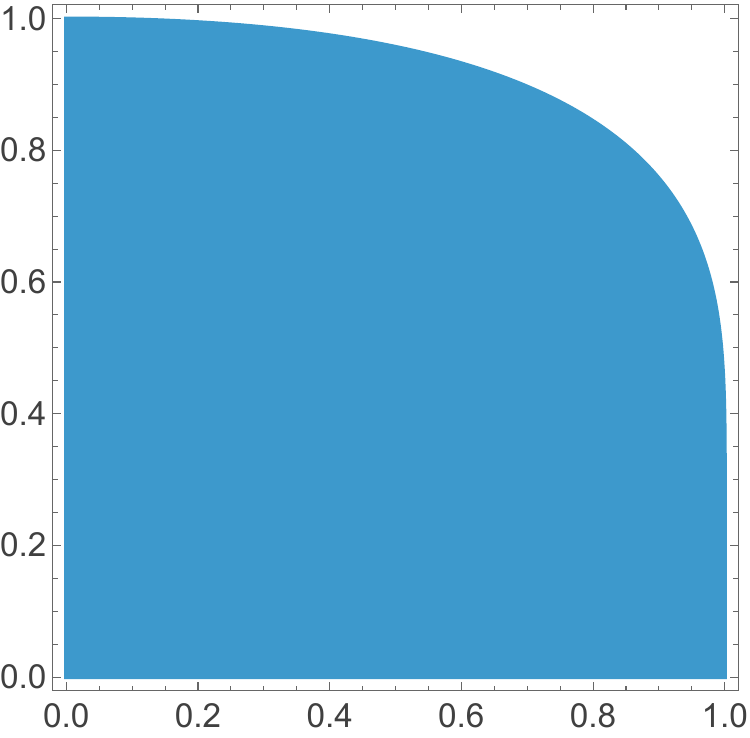}
	\put(51,-3){$m$}
	\put(-3,50){$s$}
	\end{overpic}
	\caption{ When $(m,s)$ is in the white region the spectrum \eqref{spectrum1} of the ZS operator has two arcs $\Sigma_1$ and $\Sigma_2$ that do not intersect the real axis $\R$ (Figure~\ref{fig:spectralcurves}(a)). 
    When $(m,s)$ is in the blue region the arcs $\Sigma_1$ and $\Sigma_2$ intersect the real axis $\R$ (Figure~\ref{fig:spectralcurves}(b)). }
	\label{f:spectral_parameters}
\end{figure}
In this manuscript, we consider the case in which the spectrum $\Sigma$ consists of two arcs connecting $z_1$ to $z_2$ and $\ov{z_1}$ to $\ov{z_2}$ (the white region in Figure~\ref{f:spectral_parameters}). We also define the segment $\Sigma_0$ connecting $z_1$ to $\overline{z_1}$,  (see Figure~\ref{surf1}). 
In addition, 
the quasi-momentum $p(z)$, satisfies the following inequalities.
\begin{lemma}
\label{dp_inequality}
The following inequalities are satisfied
 \be\label{e:imP}
\begin{cases}
    \Im (p(z)-E)>0\,, & z\in \mathbb{C}^+\setminus\Sigma_1\,,\\
    \Im (p(z)-E)<0 \,, & z\in \mathbb{C}^{-}\setminus\Sigma_{2}\,.\\
\end{cases}
\ee
\end{lemma}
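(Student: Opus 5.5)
We argue with harmonic functions and the maximum principle on $\C^+\setminus\Sigma_1$. Since $E$ is real, $\Im(p(z)-E)=\Im p(z)$. We first check that $h(z):=\Im p(z)$ is a well-defined single-valued harmonic function on $\C\setminus(\Sigma_1\cup\Sigma_2)$, using the first-sheet branch of $R$ with cuts $\Sigma_1,\Sigma_2$. Going once around $\Sigma_1$ changes $p$ by the $\mathcal{B}$-period $\Omega_1=4\pi|c|$, which is real by \eqref{Omega12}. Hence $\Im p$ is single-valued even though $p$ itself is not.

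By the level-set analysis preceding the lemma, the zero set of $h$ in the closed upper half plane is exactly $\R\cup\Sigma_1$. We need the limiting values of $h$ from both sides of $\Sigma_1$ to vanish. On one side this is the definition of $\Sigma_1$. On the other side, $p_+ + p_-$ along the cut is a real constant (a $\mathcal{B}$-period combination), so $\Im p_- = -\Im p_+ = 0$ there as well. On $\R$, $h=0$ by the Schwarz symmetry of $\d p$ and the normalization $\oint_{\mathcal A}\d p=0$, as already observed in the text.

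Next we fix the sign. By \eqref{e:pqasymp}, $p(z)=z+E+\pi_1/z+o(z^{-1})$ as $z\to\infty^+$, so $h(z)=\Im z+\mathcal O(|z|^{-1})>0$ for $z\in\C^+$ with $|z|$ large. The domain $D=\C^+\setminus\Sigma_1$ is connected, and $h$ has no zeros in $D$ because $\{h=0\}\cap\C^+=\Sigma_1$. By continuity and connectedness, $h$ keeps one sign throughout $D$, and that sign is positive. Alternatively, one can apply the maximum principle to $-h$ on $D\cap\{|z|<\rho\}$: its boundary values are $0$ on $\R\cup\Sigma_1$ and negative on the large arc, so $h>0$ in the interior, hence $h>0$ on $D$.

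The lower half plane follows by symmetry. The Schwarz symmetry $\overline{p(\ov z)}=-p(z)+\text{const}$ with a real constant, which comes from $\sigma^*\d p=\overline{\d p}$ together with the orientation conventions \eqref{SymAB}, gives $h(\ov z)=-h(z)$. This yields $\Im(p-E)<0$ on $\C^-\setminus\Sigma_2$. We check this directly from the asymptotics: $\Im p\sim\Im z<0$ in $\C^-$.

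The main obstacle is the rigorous claim that $h$ has no zeros in $\C^+\setminus\Sigma_1$, that is, that $\Sigma_1$ is the whole zero set in the upper half plane. This requires the paper's topological argument: the zeros of $\d p$ are non-real conjugates in the case under consideration, one arc emanates from each branch point, and there are no closed loops by harmonicity. It also requires checking that $h$ is continuous up to both sides of $\Sigma_1$ with zero boundary values, which uses the reality of the periods from \eqref{Omega12} and the $\mathcal A$-normalization.
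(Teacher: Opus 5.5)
Your proposal is correct and follows essentially the paper's proof: $\Im p$ is harmonic on $\C^+\setminus\Sigma_1$ (resp.\ $\C^-\setminus\Sigma_2$), vanishes on $\R\cup\Sigma$, has the sign of $\Im z$ near infinity, and then either the maximum principle or your connectedness argument propagates that sign. The paper uses the maximum principle, which needs only the zero boundary values and not the fact that $\Sigma_1$ is the entire zero set in $\C^+$, so your ``main obstacle'' does not arise on that route. One slip: Schwarz symmetry gives $\overline{p(\ov z)}=p(z)+\text{real const}$ (consistent with $p\sim z$), not $-p(z)+\text{const}$, which would give $\Im p(\ov z)=\Im p(z)$; your direct treatment of $\C^-$ via the asymptotics is the right way to finish.
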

\begin{proof}
In order to prove  the inequalities \eqref{e:imP}
we recall that $p(z)=\int_{z_2}^z  \d p(\lambda)$, where $\d p$ is defined in \eqref{p} and $E$ is real.  We have also defined  $\Sigma=\Sigma_1\cup\Sigma_2$ as the level set where $\Im (p(z))=0$.  By the Schwarz symmetry of the differentials,
$\d p(\bar{z})=\overline{\d p(z)}$, hence the boundary values of $p(z)$ on $\mathbb{R}\cup\Sigma$ are real. It follows from the above that $\Im(p(z))$ is harmonic  on  $\mathbb{C}\setminus\{\Sigma_1\cup\Sigma_2\}$.  In addition, from the normalization at infinity we have \eqref{e:pqasymp}, so $\Im(p(z)-E)=\Im z+o(1)$. In particular, for $|z|$ large in the upper half-plane, the quantity  $\Im(p(z)-E)$ is positive, and for $|z|$ large in the lower half-plane, the quantity  $\Im(p(z)-E)$ is negative. Set 
$D^+:=\C^+\setminus\Sigma_1$ and $D^-:=\C^-\setminus\Sigma_2$. The function $\Im (p(z)-E)$ is harmonic on each domain $D^\pm$ and continuous up to the boundary, where it vanishes. Since $D^+$ (resp. $D^-$) is connected and contains a neighborhood of infinity in the upper (resp. lower) half-plane, the maximum principle together with the sign at infinity yields \eqref{e:imP}.
\end{proof}

%First of all, we have that $z_1$ and $z_2$ are zeros of $\Im p(z)$ given the fact that $z_2$ is the base point of the integral and $\Omega_1$ is real. Then we need to show that there's only one direction goes from $z_2$. Near $z_2$, we have $R=\sqrt{z-z_2}\rho(z)$, where $\rho(z_2)\neq 0$.  Then we have
%\[
%\d p=\frac{z^2-(\xi_1+\xi_2)z+c_0}{\sqrt{z-z_2}\rho(z)}\d z:=\frac{P_0(z)}{\sqrt{z-z_2}\rho(z)}\d z.
%\]
%Introduce the local coordinate $w=\sqrt{z-z_2}$, then 
%\[
%\d p=\frac{2P_0(z)}{\rho(z)}\d w.
%\]
%Expanding both $P_0(z)$ and $\rho(z)$ at $z_2$ gives
%\[
%\frac{P_0(z)}{\rho(z)}=\frac{P_0(z_2)}{\rho(z_2)}+\mathcal{O}(z-z_2).
%\]
%Then 
%\[
%\d p=\left(\frac{2P_0(z_2)}{\rho(z_2)}+\mathcal{O}(w^2)\right)\d w.
%\]
%Integrating w.r.t $w$, we have 
%\[
%p(z)=\frac{2P_0(z_2)}{\rho(z_2)}\sqrt{z-z_2}+\mathcal{O}((z-z_2)^{3/2}).
%\]
%Let $z-z_2=r \e^{\i\theta}$, then 
%\[
%\Im p(z)=\Re\left(\frac{2P_0(z_2)}{\rho(z_2)}\right)\sqrt{r}\sin \frac{\theta}{2}+\Im \left(\frac{2P_0(z_2)}{\rho(z_2)}\right)\sqrt{r}\cos \frac{\theta}{2}+\mathcal{O}(r^{3/2}).
%\]
%implying that $\Im p(z)=0$ along a unique arc starting from $z_2$.
The next goal of this section  is to obtain the solution of the Lax pair \eqref{e:NLSLP} for the elliptic solution    $u_0$ starting from the inverse problem formulated as a Riemann--Hilbert problem.

Let \begin{equation}
\label{Omega}
\Omega=\Omega_0+x\Omega_1+t\Omega_2= \Omega_1 ( x - x_0 - \Re( z_1+z_2) t )\,,
\end{equation}
where $\Omega_1$  and $\Omega_2$   have  been defined in \eqref{Omega12}\,,
 $ \Omega_0$ is a real constant and 
 \begin{equation}
     \label{x0}
x_0=-\frac{\Omega_0}{\Omega_1}.
 \end{equation}
 We define  the  following  Riemann--Hilbert problem.
\begin{RHP}\label{RHP:O}
Find a $2\times2$ matrix-valued function $O(z) = O(z;x,t)$ which satisfies the following conditions:
\begin{enumerate}
    \item $O(z;x,t)$ is analytic in $\mathbb{C}\setminus\left\{\Sigma_0\cup\Sigma_1\cup \Sigma_2\right\}$,  where $\Sigma_0=[\overline{z_1},z_1]$  (see Figure~\ref{surf1}). 
    \item $O(z;x,t)=I+\mathcal{O}(z^{-1})$,\quad $z\to\infty$.
    \item $O(z;x,t)$ satisfies the jump conditions $O_+(z;x,t)=O_-(z;x,t)V^{(O)}(z;x,t)$, where
    \be\label{e:Ojump}
    V^{(O)}(z;x,t)=
    \begin{cases}
    \begin{pmatrix}
    	0 & \mathrm{i}\,\e^{2\mathrm{i}\,(xE+tN)}\\
\mathrm{i}\,\e^{-2\mathrm{i}\,(xE+tN)} & 0
    \end{pmatrix}\,, & z\in \Sigma_1\cup\Sigma_2\,,\\
    \e^{\mathrm{i} \,\Omega \sigma_3}\,, & z\in\Sigma_0\,.
     \end{cases}
\ee
\item $O(z;x,t)$ admits quartic root at $z\in\{z_1, \overline{z_1},z_2,\overline{z_2}\}$.
\item $O(z;x,t)$ satisfies Schwarz symmetry: $\sigma_2 O^*(z)\sigma_2=O(z)$, where $O^*(z;x,t)=\overline{O(\bar{z};x,t)}$.
\end{enumerate}
\end{RHP}
To construct the solution to RHP \ref{RHP:O}, we introduce the holomorphic differential $\omega$  given explicitly by
\begin{align}
\omega=\frac{c\d z}{\,R(z)}\,,
\end{align}
where the normalization constant $c$ has been defined in \eqref{e:defc} and it is purely imaginary.
We also define the period ratio
\be
\tau=\oint_\mathcal{B}\omega=\i\dfrac{K(m')}{K(m)},\quad m'=1-m\,,
%=\dfrac{\oint_{\mathcal{B}}\omega_0}{\oint_{\mathcal{A}}\omega_0},
\ee
and $m$ has been defined in \eqref{e:defc}.

  Next we introduce 
 the Jacobi theta function, \begin{align}
\theta_3(z;\tau)=\sum_{n\in\mathbb{Z}}\e^{2\i\pi nz+\i\pi n^2\tau},\quad z\in\mathbb{C},
\end{align}
that  satisfies the periodicity relations
\begin{align}
\theta_3(z+h+k\tau;\tau)=\e^{-\i\pi k^2\tau-2\i\pi kz}\theta_3(z;\tau),\quad h,k\in\mathbb{Z}.
\end{align}
We also recall that the Jacobi elliptic function  vanishes on the
half period $\frac{\tau}{2}+\frac{1}{2}$.
Finally, we define the Abel integral
\begin{align}
A(z)=\int_{z_2}^{z}\omega,
\end{align}
where the path of integration avoids all the paths   $\Sigma_0\cup\Sigma_1\cup \Sigma_2$.
The Abel map satisfies the following evaluation at specific points:
\begin{align}
\quad A_{+}(z_1)=\frac{\tau}{2},\quad A_{+}(\overline{z_1})=\frac{1}{2}+\frac{\tau}{2},\quad A(\overline{z_2})=\frac{1}{2}.
\end{align}
In addition, the following jump relations hold across the cuts:
\begin{align}
 & A(z_+)+A(z_-)=0,\quad z\in\Sigma_1\,,\\
 & A(z_+)-A(z_-)=\tau,\quad z\in\Sigma_0\,,\\
 & A(z_+)+A(z_-)=1,\quad z\in\Sigma_2\,,
\end{align}
where here and below we denote by $A(z_\pm)$ the boundary values of $A$ for $z$ approaching the oriented curve $\Sigma_j$ from the right and the left.
\begin{Fact}
There is a unique solution to RHP \ref{RHP:O}, which can be expressed explicitly in the form:
\begin{align}\label{defO}
O(z;x,t) = 
\e^{\i(xE + t N)\sigma_3}
\begin{bmatrix}
\frac{\gamma+\gamma^{-1}}{2} H_{11}(z) & \frac{\gamma-\gamma^{-1}}{2} H_{12}(z) \\
\frac{\gamma-\gamma^{-1}}{2} H_{21}(z) & \frac{\gamma+\gamma^{-1}}{2} H_{22}(z)
\end{bmatrix}
\e^{-\i(xE + t N)\sigma_3}\, ,
\end{align}
where $\gamma = \gamma(z)$ is analytic in $\C\setminus\Sigma$ and defined by
\be
\gamma(z)=\left(\frac{(z-z_2)(z-\overline{z_1})}{(z-z_1)(z-\overline{z_2})}\right)^{1/4},
\ee
normalized such that $\gamma(z)\to1$ as $z\to\infty$, and satisfying the jump condition
\be
\gamma_+(z)=\mathrm{i}\gamma_-(z),\qquad z\in\Sigma_1\cup\Sigma_2\,.
\ee
Then matrix entries $H_{i j}$ $(i,j=1,2)$  are expressed as
\bse
\begin{gather}
H_{11}(z) = \frac{\theta_3(0)\theta_3(A(z)-A(\infty)-\frac{\Omega}{2\pi})}{\theta_3(\frac{\Omega}{2\pi})\theta_3(A(z)-A(\infty))}, \quad
H_{12}(z) = \frac{\theta_3(0) \theta_3(A(z)+A(\infty)+\frac{\Omega}{2\pi})}{\theta_3(\frac{\Omega}{2\pi}) \theta_3(A(z)+A(\infty))}
%\e^{2\mathrm{i}(xE+tN)},
\\
H_{21}(z) = \frac{\theta_3(0)\theta_3(A(z)+A(\infty)-\frac{\Omega}{2\pi})}{\theta_3(\frac{\Omega}{2\pi})\theta_3(A(z)+A(\infty))}
%\e^{-2\mathrm{i}(xE+tN)}
, \quad
H_{22}(z) = \frac{\theta_3(0)\theta_3(A(z)-A(\infty)+\frac{\Omega}{2\pi})}{\theta_3(\frac{\Omega}{2\pi})\theta_3(A(z)-A(\infty))}.
\end{gather}
\ese
\end{Fact}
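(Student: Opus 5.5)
We split the Fact into an existence part, checked directly on the formula \eqref{defO}, and a uniqueness part, obtained by a Liouville argument. For existence we treat the entries one at a time. First, the conjugation by $\e^{\i(xE+tN)\sigma_3}$ accounts exactly for the exponential factors in the off-diagonal entries of $V^{(O)}$ on $\Sigma_1\cup\Sigma_2$. It therefore suffices to show that the inner matrix $M(z)$, with entries $\frac{\gamma\pm\gamma^{-1}}{2}H_{ij}$, has jump $\begin{pmatrix}0&\i\\ \i&0\end{pmatrix}$ on $\Sigma_1\cup\Sigma_2$ and jump $\e^{\i\Omega\sigma_3}$ on $\Sigma_0$.

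On $\Sigma_1\cup\Sigma_2$ we have $\gamma_+=\i\gamma_-$, so $(\gamma+\gamma^{-1})_+=\i(\gamma-\gamma^{-1})_-$ and $(\gamma-\gamma^{-1})_+=\i(\gamma+\gamma^{-1})_-$. The $\gamma$-factors thus swap columns, and we must check that the $H$'s swap as well: $H_{11,+}=H_{12,-}$ and $H_{21,+}=H_{22,-}$, together with the analogous relations. On $\Sigma_1$ we use $A_++A_-=0$ and the evenness of $\theta_3$:
\[
\theta_3\bigl(A_+-A(\infty)-\tfrac{\Omega}{2\pi}\bigr)=\theta_3\bigl(A_-+A(\infty)+\tfrac{\Omega}{2\pi}\bigr).
\]
On $\Sigma_2$ we use $A_++A_-=1$ together with $1$-periodicity. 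On $\Sigma_0$, $\gamma$ has no jump, while $A_+=A_-+\tau$. The quasi-periodicity factor $\e^{-\i\pi\tau-2\pi\i(A-A(\infty)-\Omega/2\pi)}$ in the numerator, divided by the corresponding factor in the denominator, gives $\e^{\i\Omega}$ in the first column and $\e^{-\i\Omega}$ in the second. This is exactly $\e^{\i\Omega\sigma_3}$ acting on the right.

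Next, the normalization: $\gamma(\infty)=1$ and $H_{11}(\infty)=H_{22}(\infty)=1$, while the off-diagonal entries carry the factor $\gamma-\gamma^{-1}=\mathcal{O}(z^{-1})$, so $O=I+\mathcal{O}(z^{-1})$. The quartic-root behaviour follows from $\gamma$, provided the $H_{ij}$ stay bounded near the branch points.

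The main obstacle is the possible poles coming from zeros of $\theta_3(A(z)\mp A(\infty))$. The zeros of $\theta_3$ lie at $\frac12+\frac\tau2$ modulo the lattice, so $\theta_3(A(z)\mp A(\infty))$ has a single zero on the two-sheeted surface. We need that this zero sits at a point where the companion factor $\gamma\pm\gamma^{-1}$ vanishes. Indeed, $\gamma\pm\gamma^{-1}=0$ exactly when $\gamma^2=\mp1$, i.e.\ $\gamma^4=1$, which happens at one point of the plane. We would identify it via the standard computation (as in Its--Kotlyarov type constructions), using the fact that $A$ is the Abel map and $\gamma^4$ is the degree-one function $\frac{(z-z_2)(z-\ov{z_1})}{(z-z_1)(z-\ov{z_2})}$ with divisor related to the half periods $A(z_1)=\tau/2$, $A(\ov{z_2})=\frac12$. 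Abel's theorem then matches the zero of $\gamma^2\mp1$ with the zero of $\theta_3(A\mp A(\infty))$ on the correct sheet. This cancellation is the step we expect to be delicate, and it requires choosing the sheet/branch of $A$ carefully. We also need $\theta_3(\Omega/2\pi)\neq0$; this holds since $\Omega$ is real and $\Im\tau>0$, whereas the zeros of $\theta_3$ have imaginary part $\Im\tau/2$ modulo the lattice.

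For uniqueness, note that $\det V^{(O)}=1$ on every arc, so $\det O$ has no jumps and at worst $|z-z_j|^{-1/2}$ singularities at the endpoints. These are removable, and with $\det O\to1$ Liouville gives $\det O\equiv1$. If $\tilde O$ is another solution, then $\tilde O O^{-1}$ has no jumps and $\mathcal{O}(|z-z_j|^{-1/2})$ singularities, which are again removable, so $\tilde O O^{-1}\equiv I$. Finally, the Schwarz symmetry holds because $\sigma_2 O^*\sigma_2$ solves the same RHP; this uses the symmetries \eqref{SymAB} of the jumps and the reality of $E$, $N$ and $\Omega$. By uniqueness it therefore coincides with $O$.
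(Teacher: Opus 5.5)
The paper states this as a Fact and gives no proof, so your outline is measured against the standard argument rather than against a proof in the text. The routine parts you carry out are correct:
\begin{itemize}
\item conjugating by $\e^{\mathrm{i}(xE+tN)\sigma_3}$ reduces the jump on $\Sigma_1\cup\Sigma_2$ to $\begin{pmatrix}0&\mathrm{i}\\ \mathrm{i}&0\end{pmatrix}$;
\item the swap relations $H_{11,+}=H_{12,-}$, $H_{21,+}=H_{22,-}$ (and their partners) follow from $A_++A_-\in\{0,1\}$ together with evenness and $1$-periodicity of $\theta_3$;
\item quasi-periodicity of $\theta_3$ produces $\e^{\mathrm{i}\Omega\sigma_3}$ on $\Sigma_0$;
\item uniqueness via $\det O\equiv1$ and Liouville, and Schwarz symmetry via uniqueness, are fine.
\end{itemize}

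The gap is the one non-routine step, the absence of poles. You defer it, and the way you frame it cannot work. You ask that the zero of $\theta_3(A\mp A(\infty))$ sit where $\gamma\pm\gamma^{-1}$ vanishes, but $\gamma+\gamma^{-1}$ has no zeros in $\C\setminus\Sigma$. The numerator of $\gamma^4-1$ is $2\mathrm{i}(\eta_1-\eta_2)z-2\mathrm{i}\,\Im(z_1\overline{z_2})$, so $\gamma^4=1$ only at the real point $z_*=\Im(z_1\overline{z_2})/(\eta_1-\eta_2)$. On $\R$, by continuity (since $\R$ does not meet $\Sigma$) and $\gamma(\pm\infty)=1$, one has $\gamma=\e^{\mathrm{i}\psi}$ with $\psi=\frac12[\arg(z-\overline{z_1})-\arg(z-\overline{z_2})]$ and $\arg(z-\overline{z_j})\in(0,\pi)$. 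Hence $\psi\in(-\frac\pi2,\frac\pi2)$ and $\gamma(z_*)=1$. So only the off-diagonal factor $\gamma-\gamma^{-1}$ has a (simple) zero. For the diagonal entries there is nothing to cancel against: what must be shown is that $\theta_3(A(z)-A(\infty))$ has no zero at all on the physical sheet. Two smaller slips: $\gamma^4$ has degree two, not one, and your pairing ``$\gamma^2\mp1$ with $\theta_3(A\mp A(\infty))$'' is the reverse of the one in the matrix.

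The missing idea is to lift $\gamma^2$ to $\mathcal{X}$. The function $F=(z-z_2)(z-\overline{z_1})/R$ is meromorphic of degree two on $\mathcal{X}$, with simple poles at $z_1$ and $\overline{z_2}$; it equals $\gamma^2$ on the first sheet and $-\gamma^2$ on the second.
\begin{itemize}
\item Since $F(\infty^+)=1$, and $-\gamma^2=1$ has no solutions, the zeros of $F-1$ are exactly $\infty^+$ and the first-sheet point over $z_*$.
\item Abel's theorem with $A(z_1)=\tau/2$ and $A(\overline{z_2})=1/2$ gives $A(z_*)+A(\infty)\equiv\frac{1+\tau}{2}$ mod $\Z+\tau\Z$, i.e.\ $\theta_3(A(z_*)+A(\infty))=0$.
\item The Abel map $\hat A(P)=\int_{z_2}^P\omega$ of a genus-one surface is a biholomorphism onto the torus. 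So this is the only zero of $P\mapsto\theta_3(\hat A(P)+A(\infty))$, and it is simple; it therefore cancels the simple zero of $\gamma-\gamma^{-1}$ in the $H_{12}$ and $H_{21}$ entries.
\item The base point $z_2$ is a branch point, so $\hat A(\iota P)=-\hat A(P)$ for the sheet exchange $\iota:(z,R)\mapsto(z,-R)$. Combined with $1+\tau\equiv0$, the zero of $\theta_3(\hat A(P)-A(\infty))$ is the point over $z_*$ on the second sheet.
\end{itemize}
Hence $\theta_3(A(z)-A(\infty))\neq0$ on the physical sheet, including at the branch points; this is also what your quartic-root claim needs.

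Finally, when $\Im z_1=\Im z_2$ the point $z_*$ escapes to $\infty$ and $\theta_3(2A(\infty))=0$, as the paper itself notes. Your normalization argument ($\gamma-\gamma^{-1}=\mathcal{O}(z^{-1})$ times a bounded $H_{12}$) then breaks down. In that case use instead that $\gamma^4-1=\mathcal{O}(z^{-2})$, so $(\gamma-\gamma^{-1})/\theta_3(A(z)+A(\infty))=\mathcal{O}(z^{-1})$.
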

\begin{proposition}\label{prop:W0}
The  matrix  
\be\label{defWbg}
W_0(x,t;z)=O(z;x,t)\e^{-\mathrm{i}\left((x-x_0)(p(z)-E)+t(q(z)-N)\right)\sigma_3}\,,
\ee
with $O(z;x,t)$ as in \eqref{defO},  $p(z)$ and $q(z)$ as in \eqref{e:defAbelian},
solves the Lax pair \eqref{e:NLSLP} with potential 
\be
\label{elliptic_general}
u_0(x,t, v,x_0)=\Im(z_2-z_1) \frac{\theta_3(0) \theta_3(2A(\infty)+\frac{\Omega}{2\pi})}{\theta_3(\frac{\Omega}{2\pi}) \theta_3(2A(\infty))}\e^{2\mathrm{i}(xE+tN)},
\ee
with $\Omega$ as in \eqref{Omega}, $E $ and $N$ as in \eqref{e:defEN},  and $v=-\Re(z_1+z_2)$.

When $\Im(z_1)=\Im(z_2)$, the above formula for the potential needs to be modified to the form 
\be
 \label{ellitpic_cn0}
 u_0(x,t,v,x_0)=\Re(z_1-z_2)\Im(z_2)\frac{\theta_3(0) \theta_3(\frac{\Omega}{2\pi}+\frac{\tau+1}{2})}{\theta_3(\frac{\Omega}{2\pi})\theta_3'(\frac{\tau+1}{2})c}\e^{2\mathrm{i}(xE+tN)}\,,
 \ee
 with the constant $c$ as in \eqref{e:defc}. 
 Further we have 
 \be
 \label{O_expansion}
O(z;x,t)=I+\frac{1}{2\mathrm{i} z}\begin{pmatrix}
	\int_{x_0+vt}^x \left[|u_0(s,t)|^2-2\pi_1\right]\d s &  u_0(x,t)\\
	u_0^*(x,t) & -\int_{x_0+vt}^x \left[|u_0(s-vt,t)|^2-2\pi_1\rangle\right]\d s 
\end{pmatrix}+\mathcal{O}(z^{-2}),
\ee
where $\pi_1$ is the term of  order  $\mathcal{O}(z^{-1})$  of the quasi-momentum expansion  $dp$  as $z\to\infty$   and it is defined in \eqref{e:defEN}.

 \end{proposition}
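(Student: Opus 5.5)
The approach is the standard ``dressing'' argument for finite-gap Riemann--Hilbert problems. We show that $W_0$ has jumps and singularities independent of $x$ and $t$. Then $\partial_x W_0\, W_0^{-1}$ and $\partial_t W_0\, W_0^{-1}$ are entire, and Liouville's theorem identifies them as polynomials in $z$.

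\textbf{Step 1: jumps of $W_0$.} Set $g(z)=(x-x_0)(p(z)-E)+t(q(z)-N)$. Across $\Sigma_1\cup\Sigma_2$ the differentials $\d p,\d q$ change sign, so $p_++p_-$ and $q_++q_-$ are constants. With the base point $z_2$ and the $\mathcal A$-normalization, one has $p_++p_-=0$ and $q_++q_-=0$ on $\Sigma_1$. On $\Sigma_2$ one picks up real constants; by the Schwarz symmetry these are absorbed into the conjugation by $\e^{\i(xE+tN)\sigma_3}$. One then checks that the off-diagonal jump $V^{(O)}$ conjugated by $\e^{-\i g\sigma_3}$ becomes $\begin{pmatrix}0&\i\\ \i&0\end{pmatrix}$, independent of $(x,t)$. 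Indeed, the factors $\e^{\pm 2\i(xE+tN)}$ are cancelled precisely by the $-E$ and $-N$ shifts in $g$. Across $\Sigma_0$, $p_+-p_-$ equals the $\mathcal B$-period $\Omega_1$ and $q_+-q_-$ equals $2\pi\Omega_2$ (up to the sign conventions of \eqref{Omega12}). Hence $\e^{\i\Omega\sigma_3}$ multiplied by $\e^{-\i(g_+-g_-)\sigma_3}$ is constant, because $\Omega=\Omega_0+x\Omega_1+t\Omega_2$ with $\Omega_0=-x_0\Omega_1$ by \eqref{x0}. Hence $W_{0,+}=W_{0,-}J$ with $J$ piecewise constant, and $\det W_0\equiv1$ (the determinant has no jumps, at worst $|z-z_j|^{-1/2}$ singularities, and tends to $1$ at infinity, so it is $1$).

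\textbf{Step 2: Liouville.} Let $L=\partial_xW_0\,W_0^{-1}$. It has no jumps, so it is analytic off the branch points. The quartic-root behaviour of $O$ and the $\sqrt{z-z_j}$ behaviour of $p$ make the singularities of $L$ at the branch points at worst $|z-z_j|^{-1/2}$, hence removable. At infinity, by \eqref{e:pqasymp}, $p-E=z+\pi_1/z+\dots$. Writing $O=I+O_1/z+\mathcal O(z^{-2})$, we get $L=-\i z\sigma_3+\i[\sigma_3,O_1]+\mathcal O(z^{-1})$, so $L$ is a polynomial equal to $-\i z\sigma_3+U$ with $U=\i[\sigma_3,O_1]$. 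Thus $u_0=2\i(O_1)_{12}$, and Schwarz symmetry (property 5 of RHP~\ref{RHP:O}) gives the required $U$-structure. Similarly, $\partial_tW_0W_0^{-1}$ is a quadratic polynomial, using $q-N=z^2+\mathcal O(z^{-1})$ together with $O_2$. Matching the coefficients of $z^2,z,z^0$ gives $\mathcal B$ in the form \eqref{e:NLSLP2}. The relations between $O_1,O_2$ come from the $z^{-1}$ term of the $x$-equation, exactly as in the standard reconstruction. Compatibility then yields NLS.

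\textbf{Step 3: explicit formulas.} Expand the Fact's formula \eqref{defO} at $z\to\infty$. We have $\gamma-\gamma^{-1}=\frac{1}{2z}(z_1+\ov{z_2}-z_2-\ov{z_1})+\dots=\frac{\i\Im(z_1-z_2)}{z}\cdot(\text{sign})$. Also $H_{12}(\infty)=\theta_3(0)\theta_3(2A(\infty)+\frac{\Omega}{2\pi})/(\theta_3(\frac{\Omega}{2\pi})\theta_3(2A(\infty)))$. This gives \eqref{elliptic_general} including the prefactor $\e^{2\i(xE+tN)}$.

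When $\Im z_1=\Im z_2$, symmetry forces $A(\infty)$ to a half period $\frac{\tau+1}{2}\cdot\frac12$-type point. Then $\theta_3(2A(\infty))=0$ and $\gamma-\gamma^{-1}$ also degenerates, so one takes the limit with l'H\^opital. The derivative $\theta_3'$ and the factor $c$ (from $A'(\infty)$-type expansions of $\omega$) produce \eqref{ellitpic_cn0}.

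For the diagonal of \eqref{O_expansion}, use the $z^{-1}$ term of the $x$-equation: $\partial_x (O_1)_{11}$ relates to $\frac{\i}{2}|u_0|^2$, with an extra constant $-\i\pi_1$ coming from the $\pi_1/z$ term of $p$. Then fix the integration constant by checking that $(O_1)_{11}$ vanishes at $x=x_0+vt$, where $\Omega=0$ and the theta quotients simplify.

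\textbf{Main obstacle.} The main difficulty is the careful bookkeeping in Step 1: the exact constants $p_\pm$ and $q_\pm$ on $\Sigma_2$ and $\Sigma_0$, and their signs relative to the orientations, which must cancel exactly against $E$, $N$ and $\Omega$. I would verify these first by computing $p_+(\ov{z_2})$ via the $\mathcal A$-cycle and the Schwarz symmetry. The other delicate point is the degenerate limit yielding \eqref{ellitpic_cn0}.
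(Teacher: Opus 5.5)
Your proposal is correct and follows the paper's proof essentially step by step. The shared steps are: $(x,t)$-independent jumps for $W_0$; removable $\mathcal{O}(|z-z_j|^{-1/2})$ branch-point singularities; Liouville, which makes $\partial_xW_0W_0^{-1}$ and $\partial_tW_0W_0^{-1}$ polynomials; reading off $u_0=2\i O^{(1)}_{12}$ from the expansion of \eqref{defO}, with the $0/0$ limit producing $\theta_3'$ and $c$ when $\Im z_1=\Im z_2$; and integrating the $z^{-1}$ relation for $O^{(1)}_{11}$ from the point where $\Omega=0$, where $H_{11}\equiv 1$.

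One correction to your Step 1 bookkeeping concerns $\Sigma_2$. There the constants $p_++p_-$ and $q_++q_-$ are the $\mathcal{A}$-periods of $\d p$ and $\d q$, just as $A_++A_-=1$ there, so they vanish by normalization rather than being "absorbed". If they were nonzero constants $C_p,C_q$, they would leave an uncancellable factor $\e^{\i((x-x_0)C_p+tC_q)\sigma_3}$ in the jump, because the conjugation by $\e^{\i(xE+tN)\sigma_3}$ is the same on both arcs. Also, the resulting constant jump on $\Sigma_1\cup\Sigma_2$ is $\i\e^{2\i x_0E\sigma_3}\sigma_1$, not $\i\sigma_1$.
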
 
\begin{proof}
By relating the jump conditions of the Riemann--Hilbert problem \ref{RHP:O} for $O(z)$ to those in $W_0$, we obtain
\begin{align}\label{e:jumpW0}
	\begin{split}
W_0(z_+)&=W_0(z_-)\e^{\i\left((x-x_0)(p(z)-E)+t(q(z)-N)\right)\sigma_3}V^{(O)} \e^{-\i\left((x-x_0)(p(z)-E)+t(q(z)-N)\right)\sigma_3}\\
&=W_0(z_-)
\begin{cases}
 \i \e^{2\i x_0 E\sigma_3}\sigma_1, & z \in \Sigma_1\cup\Sigma_2, \vspace{8pt}\\
I, & z \in\Sigma_0,
\end{cases}
\end{split}
\end{align}
where $x_0=-\frac{\Omega_0}{\Omega_1}$ as in \eqref{x0}. The first jump holds because $p(z_{+})+p(z_{-})=0$ and $q(z_{+})+q(z_{-})=0$ as $z\in\Sigma$, and the second one holds because $p(z_+)-p(z_-)=\Omega_1$ and $q(z_+)-q(z_-)=\Omega_2$  as $z\in\Sigma_0$.
However, these jumps are independent of $x$ and $t$, and therefore $(W_0)_x$ and $(W_0)_t$   satisfy the same jump conditions as $W_0$. 
It follows that the products $(W_0)_{x}(W_0)^{-1}$ and $(W_0)_{t}(W_0)^{-1}$
are analytic across $\Sigma_0\cup\Sigma_1\cup\Sigma_2$, and  can admit at worst isolated singularities at the four branch points. However, the growth condition at each endpoint guarantees the local growth rate of the product is at most $\frac{1}{2}$-root blow up; it follows that the branch point singularities are removable   and the products $(W_0)_{x}(W_0)^{-1}$ and $(W_0)_{t}(W_0)^{-1}$  are entire functions of $z$. They  have  the  asymptotic expansions
\bse
\begin{align}\label{W0xW0-1}
	\begin{split}
		(W_0)_{x}(W_0)^{-1} & =O_{x} O^{-1}+O\left(-\i (p(z)-E) \sigma_{3}\right) O^{-1} \\
		& =\mathcal{O}\left(z^{-1}\right)+\left(I+\frac{O^{(1)}}{z}+\frac{O^{(2)}}{z^2}\ldots\right)\left(-\i \sigma_{3} (z+\dfrac{\pi_1}{z}+O\left(z^{-2}\right)\right)\left(I-\frac{O^{(1)}}{z}+\frac{(O^{(1)})^2-O^{(2)}}{z^2}\ldots\right) \\
		& =-\i z \sigma_{3}+\i\left[\sigma_{3}, O^{(1)}\right]+\mathcal{O}(z^{-1}),
	\end{split}
\end{align}
and
\be\label{W0tW0-1}
(W_0)_{t}(W_0)^{-1}=-\i z^2\sigma_3+\i z[\sigma_3,O^{(1)}]+\i\left([O^{(1)},\sigma_3]O^{(1)}+[\sigma_3,O^{(2)}]\right)+\mathcal{O}(z^{-1}),
\ee
\ese
where we used the asymptotic expansion of $O(z;x,t)$ and $O(z;x,t)^{-1}$:
\[
O(z;x,t)=I+\frac{O^{(1)}}{z}+\frac{O^{(2)}}{z^2}+\mathcal{O}(z^{-3}), \quad O(z;x,t)^{-1}=I-\frac{O^{(1)}}{z}+\frac{(O^{(1)})^2-O^{(2)}}{z^2}+\mathcal{O}(z^{-3}).
\]
Since $(W_0)_{x} (W_0)^{-1}$ and  $(W_0)_{t}(W_0)^{-1}$ are entire, the tail $\mathcal{O}(z^{-1})$ is zero. Therefore  $(W_0)_{x} (W_0)^{-1}$  and $(W_0)_{t}(W_0)^{-1}$  are polynomials.
Multiplying both sides of \eqref{W0xW0-1} and \eqref{W0tW0-1} by $W_0$ from the right finally shows that $W_0$ defined in \eqref{defWbg} satisfies the linear equations 
\bse
\begin{gather}
\label{W0x}
	(W_0)_{x}=\left\{-\i z\sigma_{3}+\begin{pmatrix}
		0 & 2 \i O^{(1)}_{12}(x, t) \\
		-2 \i O^{(1)}_{21}(x, t) & 0
	\end{pmatrix}\right\} W_0\,,\\
	\label{W0t}
	(W_0)_{t}=\left\{-\i z^2\sigma_{3}
	+\begin{pmatrix}
		0 & 2\i z O_{12}^{(1)}\\
		-2\i z O_{21}^{(1)} & 0
	\end{pmatrix}
	+\begin{pmatrix}
		-2\i O_{12}^{(1)} O_{21}^{(1)} & -2 \i O^{(1)}_{12}O_{22}^{(1)}+2\i O_{12}^{(2)} \\
		2 \i O^{(1)}_{21} O_{11}^{(1)}-2\i O_{21}^{(2)} & 2\i O_{12}^{(1)}O_{21}^{(1)}
	\end{pmatrix}\right\} W_0.
\end{gather}
\ese
Comparing \eqref{e:zs} with \eqref{W0x} 
it also follows that $u_0(x,t)=2\i O_{12}^{(1)}(x,t)$  and $\overline{u_0(x,t)}=2 \i O^{(1)}_{21}(x, t) $.
Expanding the explicit solution \eqref{defO} for large $z$ we obtain the formula \eqref{elliptic_general} for $u_0(x,t)$, the elliptic solution of NLS.

The expression \eqref{elliptic_general} for $u_0(x,t)$ appears to give a zero potential when $\Im(z_2-z_1)=0$, but this is not the case.
When the branch points are of the form  $z_1=\xi_1+\i\eta_2 $   and  $z_2=\xi_2+\i\eta_2 $  where  $\eta_2>0$, we have $2A(\infty)=\frac{\tau+1}{2}$, so that $\theta_3(2A(\infty))=0$.
In this case \eqref{elliptic_general} admits further reduction.  
Indeed, as $z\to\infty$,  we have 
\begin{align*}
	2\i O_{12}(z)
	&=2\i\frac{\theta_3(0) \theta_3(\int_\infty^z \omega+\frac{\tau+1}{2}+\frac{\Omega}{2\pi})}{\theta_3(\frac{\Omega}{2\pi}) }
	\frac{\frac{1}{2}\left(\gamma(z)-\frac{1}{\gamma(z)}\right)}{ \theta_3\left(\frac{\tau+1}{2} +\int_\infty^z \omega\right)}\e^{2\i(xE+tN)}\\
	&=\frac{\theta_3(0) \theta_3(\frac{\tau+1}{2}+\frac{\Omega}{2\pi})}{\theta_3(\frac{\Omega}{2\pi}) }\dfrac{\Re(z_1-z_2)\eta_2}{\theta_3'(\frac{\tau+1}{2})c}\dfrac{1}{z}\e^{2\i(xE+tN)}+O(z^{-2}),
\end{align*}
where  $c=(\oint_\mathcal{A}\omega_0)^{-1}$.  As $u_0(x,t) = 2i\lim_{z\to \infty}O_{12}$, we have \eqref{elliptic_cn} when $\Im(z_1) = \Im(z_2)$.

To prove that   \eqref{W0t}  coincides with the time-derivative  linear equation \eqref{e:NLSLP2}  of the Lax  pair,   we need to consider the   coefficient of $z^{-1}$  in the  expansion in \eqref{W0xW0-1} that gives
\[
O^{(1)}_x=-\i[\sigma_3, O^{(2)}]+\i\pi_1\sigma_3+\i[\sigma_3,O^{(1)}]O^{(1)}.
\]
The above equation gives the identity
\begin{gather}
\label{O12x}
	O^{(1)}_{x}=\i\pi_1\sigma_3+2 \i \begin{pmatrix}
		0 & -O^{(2)}_{12}(x, t) \\
		 O^{(1)}_{21}(x, t) & 0
	\end{pmatrix}+2\i  \begin{pmatrix}
		O_{12}^{(1)} O_{21}^{(1)} &O^{(1)}_{12}O_{22}^{(1)}\\
		- O^{(1)}_{21} O_{11}^{(1)}& -O_{12}^{(1)}O_{21}^{(1)}
	\end{pmatrix}\,,
	\end{gather}
that implies
\begin{align}
&(O^{(1)}_{11})_{x}=\i\pi_1+2\i O_{12}^{(1)} O_{21}^{(1)},\quad(O^{(1)}_{22})_{x}=-\i\pi_1-2\i O_{12}^{(1)} O_{21}^{(1)}\,,\\
&(O^{(1)}_{12})_{x}=-2 \i O^{(2)}_{12}+2\i O^{(1)}_{12}O_{22}^{(1)},\quad
(O^{(1)}_{21})_{x}=2 \i O^{(2)}_{21}-2\i O^{(1)}_{21}O_{11}^{(1)}.
\end{align}
The above two relations show that \eqref{W0t} has the form \eqref{e:NLSLP}  and 
\begin{equation}
2\i(O^{(1)}_{11})_{x}=-2\i(O^{(1)}_{22})_{x}=-2\pi_1+|u_0(x,t)|^2.
\end{equation}
In order to obtain \eqref{O_expansion}  we just need to integrate the above relation over $x$ keeping in mind 
that when $x=x_0+vt$  then $O^{(1)}_{11}(x_0+vt,t)=0$ because of the parity of the theta-function.
\end{proof}

 The above  formulas  \eqref{elliptic_general} and   \eqref{ellitpic_cn0}  for the potential $u_0(x,t)$ can be written using Jacobi elliptic functions.
 
 \begin{proposition}
 \label{proposition_elliptic}
 The elliptic potential  \eqref{elliptic_general} and \eqref{ellitpic_cn0} of the NLS solution can be expressed via Jacobi elliptic functions as 
 \be\label{e:|u|^2}
|u_0(x,t)|^2=(\Im(z_1+z_2))^2-4\Im(z_1)\Im(z_2)\sn^2 \left(|z_1-\overline{z_2}|  ( x - x_0 + \Re( z_1+z_2) t )+K(m);m\right), \quad 
\ee
where  $\sn(z;m)$ is the Jacobi elliptic function and  $K(m)$ is   the complete elliptic integral of  the first kind with modulus
\[
m=1-\left|\frac{z_1-z_2}{z_1-\overline{z_2}}\right|^2.
\]
The average of $|u_0(x,t)|^2$ over a period $L=\frac{2K(m)}{|z_1-\overline{z_2}|}$ gives
\begin{equation}
\label{period_u^2}
\langle |u_0|^2\rangle:=\frac{1}{L}\int_0^L|u_0(x,t)|^2\d x=|z_1-\overline{z_2}|^2\dfrac{E(m)}{K(m)}-(\Re(z_1-z_2))^2.
\end{equation}
Further, when $\Re(z_1)=\Re(z_2)$  namely $z_1=-\frac{v}{2}+\mathrm{i} \eta_1$ and  $z_2=-\frac{v}{2}+\mathrm{i}\eta_2$, with $\eta_2>\eta_1$ then $E=\frac{v}{2}$,
$N=-\frac{v^2}{4}-\frac{\omega_0}{2}$ so that 
\begin{equation}
\label{e:dn}
u_0(x,t)=|z_1-\overline{z_2}|\dn \left(|z_1-\overline{z_2}|  ( x - x_0 -v t )+K(m);m\right)\e^{-\mathrm{i} \omega_0 t}\,\e^{\mathrm{i}  \big(vx - \frac{v^2}2t\big)}. \quad 
\end{equation}
When $\Im(z_1)=\Im(z_2)$  namely $z_1=\xi_1+\mathrm{i}\eta_2$ and  $z_2=\xi_2+\mathrm{i}\eta_2$, with $\eta_2>0$,  $\xi_1<\xi_2$ and $\xi_1+\xi_2=-v$,  then
\begin{equation}
\label{elliptic_cn}
u(x,t)= -2\mathrm{i} \eta_2 \cn\left(|z_1-\overline{z_2}|  ( x - x_0 -v t )+K(m);m\right)\,\e^{\mathrm{i}(vx-\frac{v^2}{2}t-\omega_0t+\Omega_0)}\,,
\end{equation}
for the  real constant $\Omega_0$   defined in  \eqref{Omega}.
\end{proposition}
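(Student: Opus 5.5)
Rather than work with the theta-function formulas directly, I would go back to the ODE: the elliptic potential satisfies the stationary NLS equation in a moving frame, and its spectral curve is $\mathcal{X}$. By Proposition~\ref{prop:W0}, $u_0$ solves NLS and is the finite-gap potential attached to the branch points $z_1,\overline{z_1},z_2,\overline{z_2}$. For such a potential the quantity $\det(\mathcal{B}+\frac{\i}{2}\omega_0\sigma_3)$ equals $R^2$. Matching this polynomial identity with $R^2=\prod(z-z_j)(z-\overline{z_j})$ in the Galilean frame (shift $z\to z+v/2$) expresses the invariants $\omega_0,\delta,\delta_1$ as symmetric functions of the branch points. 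Consequently $f=|\psi|^2$ satisfies $(f')^2 = -4(f^3+2\omega_0 f^2+\delta_1 f+\delta^2)=-4(f-e_1)(f-e_2)(f-e_3)$. The relations for $z_1,z_2$ in terms of $e_j$ stated earlier invert to give $e_3=(\Im(z_1+z_2))^2$, $e_2=(\Im(z_2-z_1))^2$ and $e_1=-(\Re(z_1-z_2))^2$ (in the frame $v=0$). With these values, \eqref{E:phi_def} gives $|u_0|^2=e_3-(e_3-e_2)\sn^2(\sqrt{e_3-e_1}\,y;k)$.

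The next step is to check the constants. We have $e_3-e_2=4\Im z_1\Im z_2$ and $e_3-e_1=|z_1-\overline{z_2}|^2$, so $k=(e_3-e_2)/(e_3-e_1)=m$. The phase $y$ is fixed by the normalization at $x=x_0+vt$, where $O^{(1)}_{11}=0$ and $|u_0|$ takes its extremal value. The theta-function argument $\Omega/2\pi=0$ there, and evaluating \eqref{elliptic_general} at that point should give $|u_0|^2=e_2=(\Im(z_2-z_1))^2$, which corresponds to the shift $+K(m)$ since $\sn^2(K)=1$. I would verify this by comparing the theta quotient at $\Omega=0$ with the value at the half period, using $A(\infty)$. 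The sign of the $t$-term follows from \eqref{Omega}.

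For \eqref{period_u^2} I would integrate $\sn^2$ over a period using $\int_0^{2K}\sn^2=2(K-E)/m$. This gives $\langle|u_0|^2\rangle=e_3-(e_3-e_1)(1-E/K)=e_1+(e_3-e_1)E/K$, which is the stated formula. As a cross-check, this agrees with $\langle|u_0|^2\rangle=2\pi_1$, obtained from the periodicity of $O^{(1)}_{11}$ in \eqref{O_expansion}.

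For the special cases:
\begin{itemize}
\item If $\Re z_1=\Re z_2$, then $e_1=0$ and $\delta=0$. Then $m\,\sn^2=1-\dn^2$ gives $|u_0|^2=e_3\dn^2$ with $\sqrt{e_3}=|z_1-\overline{z_2}|$, matching \eqref{eq:dn_cn}. Here $E=v/2$ follows from the symmetry $z\mapsto -\overline{z}-v$ of the curve, and $N$ then follows from \eqref{e:defEN}. The phase follows from \eqref{travelling}, so no extra phase factor appears.
\item If $\Im z_1=\Im z_2$, then $e_2=0$ and $\sn^2=1-\cn^2$ gives $|u_0|^2=4\eta_2^2\cn^2$. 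The phase constant $\Omega_0$ and the factor $-\i$ come from \eqref{ellitpic_cn0}.
\end{itemize}

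The main obstacle is pinning down the phase normalization (the $+K(m)$ shift and the constant phase in the $\cn$ case) from the theta formulas. I would handle it by evaluating at $\Omega=0$ and using the theta/Jacobi identity $\sn$ and $\cn$ as ratios of $\theta_1,\theta_2,\theta_3,\theta_4$.
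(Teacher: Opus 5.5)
Your argument is correct in substance for the modulus formula, the average and the $\dn$ case, but it takes a genuinely different route from the paper. The paper works only with the theta formula \eqref{elliptic_general}. It writes $2A(\infty)=\frac12+\i v$ and uses the addition formula $\theta_3(u+v)\theta_3(u-v)\theta_4^2(0)=\theta_4^2(u)\theta_3^2(v)-\theta_1^2(u)\theta_2^2(v)$ together with $\sn(2Kz)=\frac{\theta_3(0)\theta_1(z)}{\theta_2(0)\theta_4(z)}$. It then evaluates the constants $\theta_j^2(A(\infty))/\theta_k^2(A(\infty))$ by building rational functions on $\mathcal{X}$ (such as $f_{42}$, $f_{31}$) and letting $z\to\infty$.

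You instead pass to the classical ODE for $f=|\psi|^2$. Matching $\det(\mathcal{B}+\frac{\i}{2}\omega_0\sigma_3)=R^2$ coefficientwise does give $\omega_0,\delta,\delta_1$ as symmetric functions of the branch points. It therefore yields the dictionary $e_3=(\Im(z_1+z_2))^2$, $e_2=(\Im(z_2-z_1))^2$, $e_1=-(\Re(z_1-z_2))^2$, which the paper only asserts in Section~2. The $\sn^2$ profile, the modulus $k=m$ and the frequency $\sqrt{e_3-e_1}=|z_1-\ov{z_2}|$ then come out directly. The shift $+K(m)$ costs nothing, because the theta quotient in \eqref{elliptic_general} equals $1$ at $\Omega=0$, which gives the minimum value $e_2$. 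Your route avoids all theta-constant bookkeeping and does not depend on how $\Omega_1$ and $c$ are normalized. The paper's computation, in exchange, is self-contained from the Riemann--Hilbert formula and also produces the phase of $u_0$.

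A few points need tightening.

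First, the identity $\det(\mathcal{B}+\frac{\i}{2}\omega_0\sigma_3)=R^2$ for the Riemann--Hilbert potential is not automatic from Proposition~\ref{prop:W0}. It follows because \eqref{dqR} gives $q=-vp+R$, so in the frame $v=0$ the matrix \eqref{defWbg} has the form \eqref{def_Phi}, namely $\e^{\i tN\sigma_3}\chi(x;z)\e^{-\i tR\sigma_3}$ with $N=-\omega_0/2$.

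Second, $(f')^2=-4(f-e_1)(f-e_2)(f-e_3)$ also admits the constant solution $f\equiv e_2$. You must exclude it, for example by checking that the theta expression is not constant in $\Omega$, before concluding anything from $f=e_2$ at $\Omega=0$.

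Third, take the sign of the $t$-term from the Galilean structure, i.e.\ from the fact that $u_0\e^{-\i(vx-\frac{v^2}{2}t-\omega_0t)}$ depends only on $x-vt$. Do not take it from \eqref{Omega} as printed, which gives the opposite sign.

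Finally, when $\delta=0$ the ODE fixes $u_0$ only up to a constant phase. In the $\dn$ case your evaluation at $\Omega=0$ settles it. In the $\cn$ case, however, $u_0$ vanishes at $\Omega=0$, since $\theta_3(\frac{\tau+1}{2})=0$ and $\cn K(m)=0$, so that evaluation gives no information. To pin down the constant phase factor in \eqref{elliptic_cn} you need the theta/Jacobi identities used in the paper, so that part of your plan coincides with the paper's argument rather than replacing it.
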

The  Proposition \ref{proposition_elliptic} is proved in  the Appendix \ref{a:u}.
\begin{remark}
It is a straightforward substitution to check that the above formulas  \eqref{e:|u|^2},  \eqref{e:dn} and  \eqref{elliptic_cn} for the travelling wave elliptic solutions of the NLS equation  coincide, up to a Galilean transformation  \eqref{travelling} and a shift,  with the  formulas obtained in \eqref{E:defs} and \eqref{eq:dn_cn}.
\end{remark}
\begin{remark}
We observe that  $\langle |u_0|^2\rangle$  calculated in \eqref{period_u^2} 
 coincides with $2\pi_1$, which is the first term in the asymptotic expansion of the quasi-momentum in \eqref{e:defEN}. This is quite a general fact, since the coefficients of the asymptotic expansion of $\d p$ as $z\to\infty$  are the generating function of the conserved quantities, \cite{FT} see also \cite{BT2026}.
We can therefore re-express the expansion 
 of $O(z;x,t) $ in Proposition  \ref{prop:W0} in the form 
\be
\label{e:Oasymp}
O(z;x,t)=I+\frac{1}{2\i z}\begin{pmatrix}
	\int_{x_0+vt}^x \left[|u_0(s,t)|^2-\langle |u_0|^2\rangle\right]\d s &  u_0(x,t)\\
	u_0^*(x,t) & -\int_{x_0+vt}^x \left[|u_0(s,t)|^2-\langle |u_0|^2\rangle\right]\d s 
\end{pmatrix}+\mathcal{O}(z^{-2}).
\ee
\end{remark}

\section{Perturbative Argument}\label{perturbative}
Consider the ZS  problem \eqref{e:zs} for the potential $u(x)$, 
where $u(x)$ is asymptotic to a periodic travelling wave $u_0^\ell(x)$ as $x\to-\infty$ and  $u_0^{r}(x)$ as $x\to\infty$. 
We denote by $W^s_0(x;z)$
the  solution of the ZS  problem when the potential is  $u^s_0(x)$  on the whole real line,  with $s$ in the set $\{\ell,r\}$, that is,
\be\label{e:defW0}
W^s_0(x;z):=O^{s}(x,0;z)\e^{-\i (x-x_0^s)(p^s(z)-E^s)\sigma_{3}},\quad \mbox{$s\in\{\ell,r\}$\,,}
\ee
where $p(z)$, $O(z;x,t)$ and $x_0$ are defined in \eqref{e:defAbelian}, \eqref{defO} and \eqref{defWbg}, respectively, and $E$ is determined by the second term of the asymptotic expansions of $p(z)$ introduced in \eqref{e:defEN}. 
We seek solutions $W^{s}(x;z)$,  with $s$ in the set $\{\ell,r\}$,  of \eqref{e:zs} such that 
\begin{align}
\label{e:Wpmasy}
 W^{r}(x;z)&=O^{r}(x,0;z)(I+\mathcal{O}(x^{-1}))\e^{-\i (x-x^r_0)(p^r(z)-E^r)\sigma_{3}}, \quad \mbox{as $x\to  \infty$},\\
 W^{\ell}(x;z)&=O^{\ell}(x,0;z)(I+\mathcal{O}(x^{-1}))\e^{-\i (x-x^\ell_0)(p^\ell(z)-E^\ell)\sigma_{3}}, \quad \mbox{as $x\to-\infty$}.
 \end{align}
We define the matrix  functions $m^{s}(x;z)$, with $s=\ell $ or $s=r$,   as
\begin{align}\label{e:defm}
W^{s}(x;z)=O^s(z;x,0) m^s(x;z)\e^{-\i (x-x_0^s)(p^s(z)-E^s)\sigma_{3}}.
\end{align}
For any $z \in \mathbb{C}\setminus\partial\Sigma$, $O^s(x, z)$ is uniformly bounded in $x$.

\begin{lemma} 
Suppose $W^s(x;z)$, $s\in\{\ell,r\}$ are solutions of the ZS  equation \eqref{e:zs} with potential $u(x)$.
Then $m^s(x;z)$, defined in \eqref{e:defm}, satisfies the following ODE
\begin{align}\label{e:ODEm}
\partial_{x} m^s=-\mathrm{i} (p^s(z)-E^s)\left[\sigma_{3}, m^s\right]+O^s(z;x,0)^{-1} \Delta U^s(x) O^s(z;x,0) m^s, 
\end{align}
where $\Delta U^s:=\begin{pmatrix}0 & u(x)-u_{0}^s(x) \\ -\left(\overline{u(x)}-\overline{u_{0}^s(x)}\right) & 0\end{pmatrix}$ and $[a,b]=ab-ba$ denotes the commutator.
\end{lemma}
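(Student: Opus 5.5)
The plan is a direct substitution of the ansatz \eqref{e:defm} into the ZS equation \eqref{e:zs}, evaluated at $t=0$. We use two facts about the background factor. By Proposition~\ref{prop:W0}, $W_0^s(x;z)=O^s(z;x,0)\e^{-\i(x-x_0^s)(p^s(z)-E^s)\sigma_3}$ solves $\partial_x W_0^s=(-\i z\sigma_3+U^s(x))W_0^s$ with $U^s$ built from $u_0^s$. Also, the exponential factor $\e^{-\i(x-x_0^s)(p^s-E^s)\sigma_3}$ commutes with $\sigma_3$, and its $x$-derivative is $-\i(p^s-E^s)\sigma_3$ times itself.

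\emph{Step 1: the ODE for $O^s$.} Differentiating $W_0^s$ and multiplying on the right by the inverse of the exponential gives
\[
\partial_x O^s=(-\i z\sigma_3+U^s)O^s+\i(p^s(z)-E^s)O^s\sigma_3 .
\]
This holds off the cuts, where $O^s$ is analytic and invertible. Invertibility follows from $\det O^s\equiv1$, which in turn follows from the jump matrices having determinant one, the normalization at infinity, and Liouville's theorem; the quartic-root singularities are too weak to obstruct this.

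\emph{Step 2: substitute into the ZS equation.} Set $W^s=O^s m^s \e^{-\i(x-x_0^s)(p^s-E^s)\sigma_3}$ and write $\mathcal E$ for the exponential. Differentiating the product gives
\[
\partial_x W^s=\Big[(\partial_x O^s) m^s+O^s\partial_x m^s-\i(p^s-E^s)O^s m^s\sigma_3\Big]\mathcal E .
\]
The ZS equation requires this to equal $(-\i z\sigma_3+U^s+\Delta U^s)O^s m^s\mathcal E$, since $U=U^s+\Delta U^s$. Insert Step 1 and cancel the common terms $(-\i z\sigma_3+U^s)O^s m^s\mathcal E$. Then multiply on the left by $(O^s)^{-1}$ and on the right by $\mathcal E^{-1}$. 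What remains is
\[
\partial_x m^s+\i(p^s-E^s)\sigma_3 m^s-\i(p^s-E^s)m^s\sigma_3=(O^s)^{-1}\Delta U^s O^s m^s ,
\]
which is exactly \eqref{e:ODEm}.

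The only point requiring care is Step 1, and it is a matter of bookkeeping. One must note that $E^s$ is a constant, so it passes through the derivative unchanged. One must also justify that $O^s$ is invertible with $\det O^s=1$. The rest is algebra.
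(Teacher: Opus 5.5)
Your proof is correct and is essentially the paper's argument. The paper writes $W^s=W_0^s\,\mathrm{e}^{\mathrm{i}(x-x_0^s)(p^s-E^s)\sigma_3}m^s\,\mathrm{e}^{-\mathrm{i}(x-x_0^s)(p^s-E^s)\sigma_3}$, differentiates using that $W_0^s$ solves the background ZS equation, and solves for $\partial_x m^s$; this is the same computation as your Step~1 followed by Step~2, including the use of $\det O^s\equiv 1$ (invertibility of $O^s$), which the paper takes for granted by recalling that $O$ is unimodular.
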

\begin{proof}
Since $W^s_0(x;z)=O^s(x;z)\e^{-\i(x-x_0^s)(p^s(z)-E^s)\sigma_{3}}$ is a solution of ZS for the potential $u^s_0(x)$, we rewrite \eqref{e:defm} as
\begin{align*}
W^s(x;z)=O^s(z;x,0) m^s(x;z) \e^{-\i(x-x_0^s)(p^s(z)-E^s)\sigma_{3}}=W_0^s(x;z) \e^{\i (x-x_0^s)(p^s-E^s) \sigma_{3}}m^s(x;z)\e^{-\i(x-x_0^s)(p^s(z)-E^s)\sigma_{3}}.
\end{align*}
Taking the derivative of both sides w.r.t. $x$, we have
\begin{align*}
W_{x}^s&=\left(-\i z\sigma_{3}+U^s_0\right)W^s+W_0^s\e^{\i(x-x_0^s)(p^s(z)-E^s)\sigma_{3}}\left(\i (p^s(z)-E^s) \sigma_{3}\right)m^s\e^{-\i(x-x_0^s)(p^s(z)-E^s)\sigma_{3}}\\
&+W^s \e^{\i(x-x_0^s)(p^s(z)-E^s)\sigma_{3}} m_{x}^s \e^{-\i(x-x_0^s)(p^s(z)-E^s)\sigma_{3}}
+W_0^s\e^{\i(x-x_0^s)(p^s(z)-E^s)\sigma_{3}}m^s\left(-\i(p^s(z)-E^s)\sigma_{3}\right)\e^{-\i(x-x_0^s)(p^s(z)-E^s)\sigma_{3}}.
\end{align*}
Plugging this into the ZS  equation \eqref{e:zs} and solving for $m_{x}^s$ then gives \eqref{e:ODEm}.
\end{proof}
Equation \eqref{e:ODEm} gives the Volterra integral equation:
\bse\label{e:Intm}
\begin{align}
m^\ell(x;z)&=I+\int_{-\infty}^{x} \e^{-\i (x-y) (p^\ell(z)-E^\ell) \sigma_{3}} O^\ell(y;z)^{-1} \Delta U^\ell(y)O^\ell(y;z) m^\ell(y;z)\e^{\i (x-y) (p^\ell(z)-E^\ell) \sigma_{3}}  \d y\,,\label{e:Intml}
\\
m^r(x;z)&=I+\int_{\infty}^{x} \e^{-\i (x-y) (p^r(z)-E^r) \sigma_{3}} O^r(y;z)^{-1} \Delta U^r(y)O^r(y;z) m^r(y;z)\e^{\i (x-y) (p^r(z)-E^r) \sigma_{3}}   \d y\,.\label{e:Intmr}
\end{align}
\ese

Hereafter, we define 
\be
\Sigma_i:=\Sigma^r_i \cup \Sigma^\ell_i \ (i=1,2),\quad
\Sigma^r:=\Sigma_1^r\cup\Sigma^r_2
%=[\i\eta_1^r,\i\eta_2^r]\cup[-\i\eta_1^r,-\i\eta_2^r]
,\quad \Sigma^\ell:=\Sigma_1^\ell\cup\Sigma^\ell_2
%=[\i\eta_1^\ell,\i\eta_2^\ell]\cup[-\i\eta_1^\ell,-\i\eta_2^\ell]
\,,\quad \Sigma:=\Sigma^r\cup\Sigma^\ell\,,
\ee
where the subscript ``1'' (resp. ``2'') denotes the spectrum on the upper (resp. lower) half plane; see Figure \ref{fig:cuts}.
\begin{figure}[t]
 \centering
 \begin{subfigure}[b]{0.33\textwidth}
  \centering
  \includegraphics[width=\linewidth]{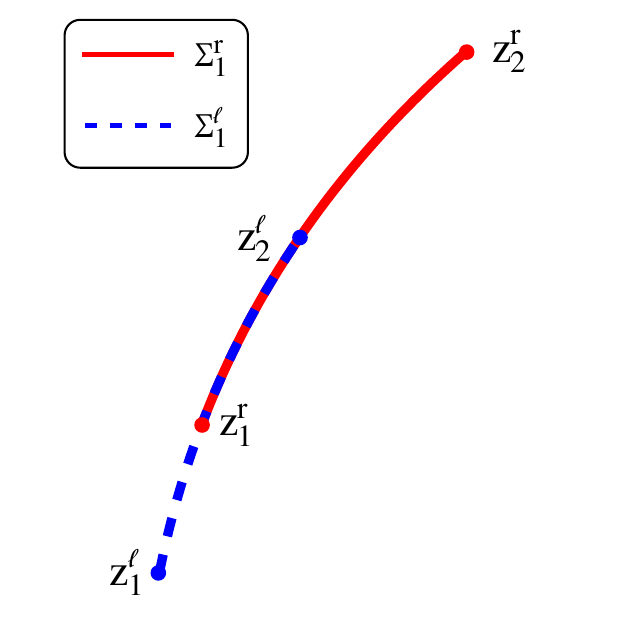}
  \caption{}
 \end{subfigure}\hfill
 \begin{subfigure}[b]{0.33\textwidth}
  \centering
  \includegraphics[width=\linewidth]{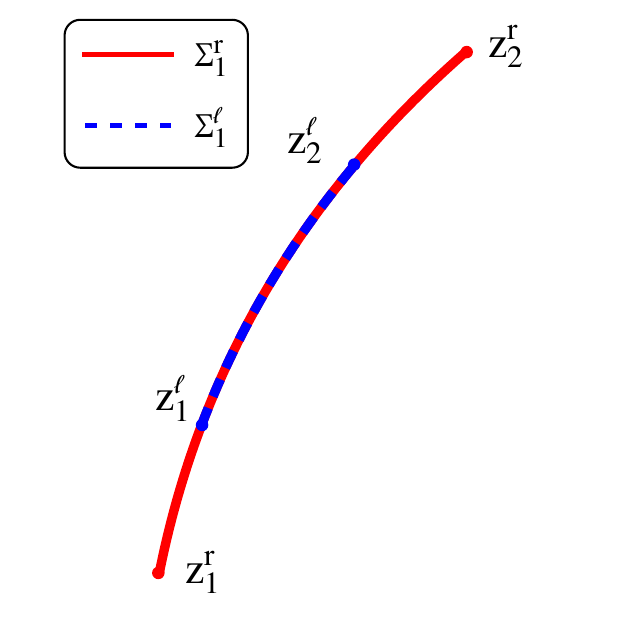}
  \caption{}
 \end{subfigure}\hfill
 \caption{Schematic of the cuts in $\C^+$ for the case in which: (a) $\Sigma_1^r$ is on top of $\Sigma_1^\ell$, and the two cuts overlap on the segment connecting $z_1^r$ and $z_2^\ell$; (b)  $\Sigma_1^\ell$ lies completely in the interior of  $\Sigma_1^r$. In both (a) and (b), $\Sigma_1^\ell$ and $\Sigma_1^r$ can be interchanged, yielding four configurations in total.}
 \label{fig:cuts}
\end{figure}

The next proposition gathers standard facts needed in the Riemann--Hilbert analysis. The proof appears in Appendix \ref{a:propm}.
\begin{proposition}\label{propm}
Suppose $u(x)-u_0^\ell(x)\in L^1(\Real^-)$ and $u(x)-u_0^r(x)\in L^1(\Real^+)$. Then $W^s(x;z)$, $s \in \{\ell,r\}$, have the following properties:
\begin{enumerate}
\item \label{analyticity}
For every $z\in \R \cup \operatorname{int}(\Sigma^s_\pm)$\footnote{Here $z \in \operatorname{int}(\Sigma^s_\pm)$ denotes $z$ as a left/right boundary values on the interior of $\Sigma^s$.}, and each $x_*\in\Real$, there exist unique solutions $W^\ell(\cdot;z)\in L^\infty(-\infty,x_*)$ and $W^r(\cdot;z)\in L^\infty(x_*,\infty)$ of \eqref{e:zs} given by \eqref{e:defm}, where $m^s(x;z)$, $s\in\{\ell,r\}$ are solutions of the integral equations \eqref{e:Intm}. 
Moreover, for each fixed $x\in \R$, the columns $W^s_j(x;z)$, $j=1,2,$ extend continuously to analytic functions in the following domains:
\begin{multicols}{2}
\begin{enumerate}
\item $W_{1}^\ell(x ; z)$ is analytic for $z \in \mathbb{C}^+ \backslash \Sigma_{1}^\ell$.
\item $W_{2}^\ell(x; z)$ is analytic for $z\in \mathbb{C}^- \backslash \Sigma_{2}^\ell$.  
\item $W_{1}^r(x; z)$ is analytic for $z\in \mathbb{C}^- \backslash \Sigma_{2}^r$. 
\item $W_{2}^r(x; z)$ is analytic for $z\in \mathbb{C}^+ \backslash \Sigma_{1}^r$.
\end{enumerate}
\end{multicols}

\item \label{jumps}
For $s\in\{\ell,r\}$ and $z \in \Sigma^s$, the boundary values $W^s(x;z_\pm)$ satisfy
\begin{equation}\label{jumpW}
    W^s(x;z_+)= W^s(x;z_-)\, \e^{2\mathrm{i} x_0^s E^s \sigma_3} (i \sigma_1),  \qquad z \in \Sigma^s.
\end{equation}

\item \label{asymptotics}
Given $n\geq 2$, suppose that $u(x)-u_0^\ell(x)\in \mathcal{W}^{n,1}(\Real^-)$ and $u(x)-u_0^r(x)\in \mathcal{W}^{n,1}(\Real^+)$.
Then $W^s(x;z) = \sum_{k=0}^{n-1} W^s_k(x) z^{-k} + \mathcal{O}\left( z^{-n} \right)$ as $z \to \infty$. To leading order,
\bse\label{e:asymptoticsW}
\begin{align}
&\begin{aligned}
    W^\ell_1(x;z) \e^{\mathrm{i}  (x- x_0^\ell)z} 
    &=\begin{pmatrix} 1\\0 \end{pmatrix} + 
    \frac{1}{2\mathrm{i} z} \begin{bmatrix}
	  A^\ell(x)   &%\vspace{0.8ex}\\
	  \overline{u(x)} 
    \end{bmatrix}^\intercal
+ \mathcal{O}\left( z^{-2} \right), \\
W^r_2(x;z) \e^{-\mathrm{i}  (x- x_0^r)z} 
&=\begin{pmatrix} 0\\1 \end{pmatrix} + 
    \frac{1}{2\mathrm{i} z} \begin{bmatrix}
	    u(x) & %\vspace{0.8ex}\\
	   -A^r(x)
    \end{bmatrix}^\intercal
+ \mathcal{O}\left( z^{-2} \right)
\end{aligned}
\qquad \overline{\C^+} \ni z \to \infty, \\%   
&\begin{aligned}
    W^r_1(x;z) \e^{\mathrm{i}  (x- x_0^r)z} &=  
    \begin{pmatrix} 1\\0 \end{pmatrix} + 
    \frac{1}{2\mathrm{i} z} \begin{bmatrix}
	  A^r(x)   &%\vspace{0.8ex}\\
	  \overline{u(x)} 
    \end{bmatrix}^\intercal
+ \mathcal{O}\left( z^{-2} \right), \\
W^\ell_2(x;z) \e^{-\mathrm{i}  (x- x_0^\ell)z} 
&=\begin{pmatrix} 0\\1 \end{pmatrix} +
    \frac{1}{2\mathrm{i} z} \begin{bmatrix}
	    u(x) & %\vspace{0.8ex}\\
	   -A^\ell(x)
    \end{bmatrix}^\intercal
+ \mathcal{O}\left( z^{-2} \right)
\end{aligned}
\qquad \overline{\C^-} \ni z \to \infty.
\end{align}
\ese
Here, $A^s(x)$ are functions satisfying $\frac{\mathrm{d}}{\mathrm{d} x} A^s(x) = |u(x)|^2$, $s\in \{ \ell,r \}$.

\item \label{singularity}
Suppose $u(x)-u_0^\ell(x)\in L^{1,1}(\R^-)$ and $u(x)-u_0^r(x)\in L^{1,1}(\R^+)$. Fix $x_*\in\R$. Then there exists a  constant $C>0$ (independent of $z$) such that
% \todo{This is true also for $W(x;z)$}
\bse\label{e:singW}
\begin{gather}
	| W^\ell(x;z) | \leq \sup_{x< x_*} | {W}^\ell_0(x;z) | \cdot \exp \left( C (|x_*|+1) \| \Delta u^\ell \|_{L^{1,1}(-\infty,x_*]} \right),  \qquad z \to z_j^\ell (j=1,2), \quad x < x_*.\label{e:singWl}\\
		| W^r(x;z) | \leq \sup_{x> x_*} | {W}^r_0(x;z) | \cdot \exp \left( C (|x_*|+1) \| \Delta u^r \|_{L^{1,1}(-\infty,x_*]} \right),  \qquad z \to z_j^r (j=1,2), \quad x > x_*.\label{e:singWr}
\end{gather}
\ese
Each solution $W^s(x;z)$, $s \in \{\ell,r\}$, has at most fourth-root singularities at $z_j^s, \ j=1,2$, such that 
\begin{equation}\label{e:W.endpoints}
\begin{aligned}
      &\gamma(z_\pm)  W^s(x;z_\pm) = \widehat{W}^s(x;z_j^s) + o(1)
      && z_\pm \to z_2^s , \quad z_\pm \in \Sigma^s, \\
      &\gamma(z_\pm)^{-1}  W^s(x;z_\pm) = \widehat{W}^s(x;z_j^s) + o(1)
      && z_\pm \to z_1^s , \quad z_\pm \in \Sigma^s,
\end{aligned}
\end{equation}
where $\widehat{W}^s(x;z_j^s)$ is the solution of \eqref{e:endpoint.integral.eq}. For $W_1^\ell(x;z)$ and $W_2^r(x;z)$, which admit analytic extension to $\C^+ \setminus \Sigma_1^s$, the above estimates extend to an open neighborhood of $z_j^s.$

\end{enumerate}
\end{proposition}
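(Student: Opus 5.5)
The proof of Proposition~\ref{propm} will rest on the Volterra integral equations \eqref{e:Intm} for $m^s$, combined with the uniform boundedness of $O^s(z;x,0)$ in $x$ for $z$ away from $\partial\Sigma$ and the sign information of Lemma~\ref{dp_inequality}.

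\emph{Existence and analyticity.} I treat $m^\ell$ in detail; $m^r$ is symmetric. First I write the kernel of \eqref{e:Intml} column by column. For the first column, the conjugation $\e^{-\i(x-y)(p^\ell-E^\ell)\sigma_3}(\cdot)\e^{\i(x-y)(p^\ell-E^\ell)\sigma_3}$ acts on the entries of column one as multiplication by $1$ or by $\e^{-2\i(x-y)(p^\ell(z)-E^\ell)}$. Since $y<x$ and $\Im(p^\ell-E^\ell)\ge 0$ on $\overline{\C^+}\setminus\Sigma^\ell_1$ by \eqref{e:imP}, this factor has modulus at most $1$. On $\R$ and on the two sides of $\Sigma_1^\ell$ the quantity $p^\ell$ is real, so the factor stays bounded there as well. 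The kernel is therefore bounded by $C(z)|\Delta u^\ell(y)|$, where $C(z)=\sup_x|O^\ell|\,|(O^\ell)^{-1}|$ is locally bounded away from the branch points; note $\det O^\ell=1$. The standard Neumann series then converges, with the bound $|m^\ell_1(x;z)|\le \exp\big(C(z)\|\Delta u^\ell\|_{L^1(-\infty,x)}\big)$. Each iterate is analytic in $z\in\C^+\setminus\Sigma_1^\ell$, because $O^\ell(z;y,0)$ and $p^\ell$ are analytic there, and the convergence is locally uniform. This gives analyticity and continuity up to the boundary of $W_1^\ell=O^\ell m^\ell_1\e^{-\i(x-x_0^\ell)(p^\ell-E^\ell)}$. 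The lower half-plane and $W_2^\ell$ follow the same way with the opposite sign in \eqref{e:imP}, and $W^r$ follows with the roles of $y>x$ reversed. Uniqueness on $\R\cup\operatorname{int}\Sigma^s_\pm$ comes from Gronwall applied to the difference of two solutions of the full matrix equation, where both columns have oscillatory but bounded kernels.

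\emph{Jumps.} On $\Sigma^s$ I will show that $W^s(x;z_-)\e^{2\i x_0^sE^s\sigma_3}(\i\sigma_1)$ solves the same ZS equation as $W^s(x;z_+)$. It also has the same asymptotics as $x\to\infty_s$, because $W_0^s$ satisfies exactly this jump by \eqref{e:jumpW0} and $\Delta U^s$ has no jump. The uniqueness just proved then yields \eqref{jumpW}.

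\emph{Large-$z$ asymptotics.} Here I write $m^s=I+m^{s,(1)}/z+\dots$. I expand $O^s$ via \eqref{e:Oasymp} and use $p^s-E^s=z+\pi_1/z+\dots$ from \eqref{e:pqasymp}. The terms are then obtained by repeated integration by parts in \eqref{e:Intm}, which requires the $\mathcal W^{n,1}$ regularity; the oscillatory off-diagonal integrals produce the boundary terms. Combining the $1/z$ terms of $O^s$ and $m^s$ gives the entries $A^s$ and $\bar u$. The identity $A_x=|u|^2$ follows by inserting the expansion into the ZS equation, exactly as in the proof of Proposition~\ref{prop:W0}.

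\emph{Endpoint behaviour (main obstacle).} Near $z_j^s$ the constant $C(z)$ blows up, because $O^s$ has quartic-root singularities. The plan is to factor $O^s=\widehat O^s\,\mathrm{diag}$-type$\cdot\gamma^{\pm1}$ as in \eqref{defO}: the entries of $O^s$ are combinations of $\gamma\pm\gamma^{-1}$ times theta quotients that are bounded near the branch points. I then renormalize by setting $\widehat W^s=\gamma^{\pm1}W^s$, which gives a Volterra equation \eqref{e:endpoint.integral.eq} whose kernel is bounded by $C(1+|y|)|\Delta u^s(y)|$. The growth in $y$ arises because at $z_j^s$ the exponential factors degenerate, and the difference quotient $\big(\e^{\i(x-y)(p-E)}-\e^{-\i(x-y)(p-E)}\big)/(p-E)$ is bounded by $|x-y|$. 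Gronwall in $L^{1,1}$ then gives \eqref{e:singW}. Continuity of the kernel in $z$ up to $z_j^s$, via dominated convergence, yields \eqref{e:W.endpoints}. I expect this step to be the hardest, in particular checking that the cancellation between $\gamma$ and $\gamma^{-1}$ terms survives conjugation by $O^{-1}$.
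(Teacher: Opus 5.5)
\paragraph{Gap in the analyticity step.} Your overall architecture matches the paper's: a Neumann series for \eqref{e:Intm}, uniqueness for the band jumps, integration by parts at large $z$, and a $\gamma^{\pm1}$-renormalized Volterra equation whose kernel grows linearly in $x-y$ near $z_j^s$. The analyticity step, however, rests on a false claim. $O^\ell(z;y,0)$ and $p^\ell(z)$ are \emph{not} analytic in $\C^+\setminus\Sigma_1^\ell$.
\begin{itemize}
\item Because $\oint_{\mathcal B}\mathrm{d}p=\Omega_1\neq0$, $p^\ell$ is single valued only after cutting along $\Sigma_0^\ell=[\overline{z_1^\ell},z_1^\ell]$, where $p^\ell_+-p^\ell_-=\Omega_1^\ell$.
\item $O^\ell$ also jumps across $\Sigma_0^\ell$, by $\mathrm{e}^{\mathrm{i}\Omega^\ell\sigma_3}$ (RHP~\ref{RHP:O}).
\end{itemize}
This segment runs from $z_1^\ell$ down through the real axis. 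So your Neumann iterates, and $m_1^\ell$ itself, are analytic only in $\C^+\setminus(\Sigma_1^\ell\cup\Sigma_0^\ell)$, and they genuinely jump across $\Sigma_0^\ell$.

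What is missing is a proof that these jumps cancel in $W_1^\ell=O^\ell m_1^\ell\,\mathrm{e}^{-\mathrm{i}(x-x_0^\ell)(p^\ell-E^\ell)}$. The identity $\Omega^\ell(y)+(x-y)\Omega_1^\ell=\Omega^\ell(x)$ shows that $\mathrm{e}^{-\mathrm{i}\Omega^\ell\sigma_3}m_1^\ell(x;z_-)\mathrm{e}^{\mathrm{i}\Omega^\ell}$ solves the Volterra equation at $z_+$. Uniqueness then gives $m_1^\ell(x;z_+)=\mathrm{e}^{-\mathrm{i}\Omega^\ell\sigma_3}m_1^\ell(x;z_-)\mathrm{e}^{\mathrm{i}\Omega^\ell}$ on $\Sigma_0^\ell\cap\C^+$. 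Hence $W_1^\ell$ is continuous across $\Sigma_0^\ell$, and Morera's theorem removes the cut. Your own ZS-plus-asymptotics argument for the band jumps would also work here, applied to the decaying column, since $W_0^\ell$ has trivial jump on $\Sigma_0^\ell$ by \eqref{e:jumpW0}. The same step is needed for the other three columns.

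\paragraph{Smaller points.}
\begin{itemize}
\item \emph{Sign of the kernel factor.} In the first column, the second entry carries $\mathrm{e}^{+2\mathrm{i}(x-y)(p^\ell-E^\ell)}$, not $\mathrm{e}^{-2\mathrm{i}(x-y)(p^\ell-E^\ell)}$. Only with the plus sign do $y<x$ and $\Im(p^\ell-E^\ell)\ge0$ give modulus at most one.
\item \emph{Which exponent degenerates at $z_2^s$.} It is $p^s$ itself, since $p^s(z_2^s)=0$, once $E^s$ is absorbed into the $\mathrm{e}^{\pm\mathrm{i}xE^s\sigma_3}$ conjugation built into $O^s$. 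By contrast $p^s-E^s\to-E^s$, which is generally nonzero, so a difference quotient in $p-E$ cannot offset the $\gamma^{-2}$ factor. The relevant combination is $\gamma^{-2}\sin((x-y)p^s)$, which stays $\mathcal O(|x-y|)$ because $p^s\sim(z-z_2^s)^{1/2}\sim\gamma^2$.
\item \emph{Conjugation by $(O^s)^{-1}$.} Your worry here goes away if, as in the paper, you write the equation for $\widehat W^s$ with kernel $W_0^s(x;z)W_0^s(y;z)^{-1}$ (up to $E^s$-phases). That kernel is entire in $z$: the band jumps cancel and the branch-point singularities are removable. So you only need the uniform bound $C(x_*-y+1)$ near $z_j^s$.
\item \emph{Existence of the large-$z$ expansion.} Writing $m^s=I+m^{s,(1)}/z+\cdots$ presupposes that the expansion exists. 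The paper justifies this by showing that the $j$-th Neumann iterate is $\mathcal O(z^{-\lceil j/2\rceil})$, so only finitely many iterates contribute up to order $z^{-n}$.
\end{itemize}
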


\noindent\textbf{Symmetry of solutions and continuous spectral data}\\
For $z\in\mathbb{R}\cup(\Sigma^r\cap\Sigma^\ell)$, $W^\ell(z)$  and  $W^r(z)$ are both fundamental  matrix solutions of the same $2\times2 $ first-order matrix equation \eqref{e:zs} and therefore are linearly dependent, that is
\bse
\begin{gather}
W^\ell(x; z)=W^r(x; z) S(z),\quad z \in \mathbb{R}, \\
\label{e:S.Sigma}
W^\ell(x ; z_\pm)=W^r(x ; z_\pm) S(z_\pm),\quad z\in\Sigma^r\cap\Sigma^\ell,
\end{gather}
\ese
where the subscript $+$ (resp. $-$) denotes the limiting value on the positive side (resp. negative side) of the contour.

Because the focusing ZS  equation has the Schwarz symmetry $\sigma_{2} X^*(z)\sigma_{2}=X(z)$, the same symmetry must be satisfied by the solution, namely %olds true for the finite gap solution $W_0(x;z)$, the solutions $W^s(x;z)$ of perturbed problem and the scattering matrix $S(z)$:
\begin{align}
\sigma_{2} W^{*}_0(x; z) \sigma_{2}=W_0(x; z),\quad \sigma_{2} W^{s,*}(x; z) \sigma_{2}=W^s(x; z), \quad \sigma_{2} S^{*}(x; z) \sigma_{2}=S(x; z),
\end{align}
where $\sigma_2=\begin{pmatrix}0&-\i\\\i&0\end{pmatrix}$ is the second Pauli matrix. Thus, the scattering matrix $S(z)$ has the following form: 
\begin{subequations}\label{e:S}
\begin{align}
S(z)&=\begin{pmatrix}a(z) & -b^*(z) \\ b(z) & a^*(z)\end{pmatrix}, ~~\qquad z \in \mathbb{R},\\
S(z_{\pm})&=\begin{pmatrix}a(z_{\pm}) & -b_2(z_{\pm}) \\ b_1(z_{\pm}) & a^*(z_{\pm})\end{pmatrix}, \quad z_{\pm} \in \Sigma^r_1\cap\Sigma^\ell_1\,.
\end{align}
\end{subequations}

On $\Sigma^r\setminus\Sigma^\ell$, a matrix scattering relation does not exist because only the first column of $W_1^\ell(x;z)$ (resp. $W^\ell_2(z)$) extends analytically to $\C^+ \setminus \Sigma_1^\ell$ (resp. $\C^-\backslash\Sigma_2^\ell$ ).
The analytic columns satisfy the scattering relations:
\bse
\begin{gather}
W_1^\ell(z)=a(z) W_1^r(z)+b_1(z)W_2^r(z)\,, \qquad z\in \Sigma^r_1\setminus\Sigma^\ell_1\,,\\
W^\ell_2(z)=-b_1^*(z) W^r_1(z)+a^*(z)W^r_2(z)\,,\qquad z\in \Sigma^r_2\setminus\Sigma^\ell_2\,.
\end{gather}
\ese 
Similarly,  only  $W^r_2(z)$ (resp. $W^r_1(z)$)
has an analytic extension to $\C^+\backslash\Sigma_1^r$  (resp. $\C^-\backslash\Sigma_2^r$). On $\Sigma^\ell\setminus\Sigma^r$ these columns satisfy:
\bse
\begin{gather}
W_2^r(z)=b_2(z)W_1^\ell(z)+a(z) W_2^\ell(z) \,,\qquad z\in \Sigma^\ell_1\setminus\Sigma^r_1\,,\\
W^r_1(z)=a^*(z)W^\ell_1(z)-b_2^*(z)W_2^\ell(z)\,,\qquad z\in \Sigma^\ell_2\setminus\Sigma^r_2\,.
\end{gather}
\ese
We are now ready to prove Theorem~\ref{theorem1} stated in the introduction.\\
{\it Proof of Theorem~\ref{theorem1}. }
Applying Cramer's rule to \eqref{e:S} 
gives the determinantal formulae \eqref{e:scoefs} for the scattering coefficients.  The analytic properties of $a(z)$, $b(z)$, $b_1(z)$ and $b_2(z)$ are inherited from those of $W^\ell(z)$ and $W^r(z)$ established in Proposition \ref{propm}. 
 To prove \eqref{a+asympt}, we insert the asymptotic behavior of $W^s(z)$ from \eqref{e:asymptoticsW} into \eqref{e:scoefs}. We obtain, as $z\to\infty$,
\begin{align*}
a(z) \e^{-\i(x_0^\ell-x_0^r)z}=\det[W_{1}^\ell(x;z)\e^{\i(x-x_0^\ell)z}, W_{2}^r(x;z)\e^{-\i(x-x_0^r)z}]
=\det\left[\begin{psmallmatrix} 1 & 0 \\ 0 & 1 \end{psmallmatrix} +\mathcal{O}(z^{-1})\right]
=1+\mathcal{O}(z^{-1}).
\end{align*} 
The asymptotic bound for $b(z)$, \eqref{b+asympt}, is proved in Appendix \ref{s:asymptoticsb}. 
\hfill\qed

\color{black}
The next proposition gives further insight into the  algebraic relations of the scattering coefficients.
\begin{proposition}\label{S}
    Let $u(x)-u_0^\ell(x)\in L^1(\Real^-)$, $u(x)-u_0^r(x)\in L^1(\Real^+)$, and $a(z)$, $b(z)$, $b_1(z)$ and $b_2(z)$ be the scattering data in \eqref{e:S}, then
    \begin{enumerate}
      
        \item For $z \in \Sigma^r\cap \Sigma^\ell$, the scattering data satisfy the jump relations
        % \bse\label{jumpS}
        % \begin{gather}
        % \textcolor{purple}{
        % a(z_+)=a^*(z_-)\e^{-2\mathrm{i} (x_0^\ell E^\ell-x_0^r E^r)}
        % },\\
        % b_1(z_+)=-b_2(z_-)\e^{-2\mathrm{i} (x_0^\ell E^\ell+x_0^r E^r)},\\
        %  b_2(z_+)=-b_1(z_-)\e^{2\mathrm{i} (x_0^\ell E^\ell+x_0^r E^r)}, \\
        %  \textcolor{purple}{
        % a^*(z_+)=a(z_-)\e^{2\mathrm{i} (x_0^\ell E^\ell-x_0^r E^r)}
        % }\,.
        % \end{gather}
        % \ese
%        \be\label{jumpS}
%         S(z_+)=\e^{2\i x_0^r E^r\sigma_3}\sigma_1S(z_-)\sigma_1 \e^{-2\i x_0^\ell E^\ell\sigma_3}\,.
%        \ee
    \begin{equation}\label{jumpS}
        \begin{bmatrix}
           a(z_+) & -b_2(z_+) \\ b_1(z_+) & a^*(z_+)
        \end{bmatrix}
        =
        \begin{bmatrix}
           a^*(z_-)\e^{-2\mathrm{i}(x_0^\ell E^\ell - x_0^r E^r)} & 
           b_1(z_-)\e^{2\mathrm{i}(x_0^\ell E^\ell + x_0^r E^r)} \\ 
           -b_2(z_-)\e^{-2\mathrm{i}(x_0^\ell E^\ell + x_0^r E^r)} & 
           a(z_-)\e^{2\mathrm{i}(x_0^\ell E^\ell - x_0^r E^r)}
        \end{bmatrix}
    \end{equation}
        \item For $z\in\Sigma\setminus(\Sigma^r\cap\Sigma^\ell)$, the scattering data satisfy the jump relations
        \bse\label{a+symmetry}
         \begin{align}
        &a(z_\pm)=\mp \mathrm{i} \e^{-2 \mathrm{i} x_0^\ell E^\ell} b_2(z_\mp), 
        &&z\in \Sigma^\ell_1\setminus\Sigma^r_1\,, 
        &a(z_\pm)=\mp\mathrm{i} \e^{2\mathrm{i} x_0^rE^r} b_1(z_\mp), 
        &&z\in\Sigma^r_1\setminus\Sigma^\ell_1\,, \\
        &a^*(z_\pm)=\pm \mathrm{i} \e^{2\mathrm{i} x_0^\ell E^\ell}b_1(z_\mp),
        &&z\in \Sigma^\ell_2\setminus\Sigma^r_2\,,
        &a^*(z_\pm)=\pm \mathrm{i} \e^{-2 \mathrm{i} x_0^rE^r} b_2(z_\mp),
        &&z\in \Sigma^r_2\setminus\Sigma^\ell_2\,,
        \end{align}
        \ese
        The above facts imply the following relations, which we record here for later use:
      \bse\label{e:ajump}
        \begin{align}
        	&\frac{a(z_+)}{a(z_-)}=	-\frac{b_2(z_-)}{b_2(z_+)}, &&z\in \Sigma_1^\ell\setminus\Sigma_1^r\,,
            &\frac{a(z_+)}{a(z_-)}=	-\frac{b_1(z_-)}{b_1(z_+)}, &&z\in \Sigma_1^r\setminus\Sigma_1^\ell\,, 
        	\\
            &\frac{a^*(z_+)}{a^*(z_-)}=-\frac{b_1(z_-)}{b_1(z_+)}, 
            && z\in\Sigma_2^\ell\setminus\Sigma_2^r\,,
        	&\frac{a^*(z_+)}{a^*(z_-)}=	-\frac{b_2(z_-)}{b_2(z_+)}, 
            &&z\in \Sigma_2^r\setminus\Sigma_2^\ell\,.
        	% \\
        	% |a(z)|^2=\frac{1}{1-|r|^2}, \quad z\in \R\,.
        \end{align}
      \ese

    \end{enumerate}
\end{proposition}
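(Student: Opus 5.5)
The plan is to derive every relation from the scattering relations \eqref{e:S.Sigma} together with the jump relations \eqref{jumpW} of Proposition~\ref{propm}. Write $J^s:=\e^{2\i x_0^sE^s\sigma_3}(\i\sigma_1)$, so that $W^s(x;z_+)=W^s(x;z_-)J^s$ on $\Sigma^s$.

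\textbf{Overlap region.} For $z\in\Sigma^r\cap\Sigma^\ell$ I will evaluate \eqref{e:S.Sigma} at both boundary values. Substituting the jumps into $W^\ell(z_+)=W^r(z_+)S(z_+)$ gives
\[
W^\ell(z_-)J^\ell=W^r(z_-)J^rS(z_+).
\]
Using $W^\ell(z_-)=W^r(z_-)S(z_-)$ and the invertibility of $W^r(z_-)$ (it is unimodular), this yields
\[
S(z_+)=(J^r)^{-1}S(z_-)J^\ell.
\]
Since $(\i\sigma_1)^{-1}=-\i\sigma_1$, this equals $\sigma_1\e^{-2\i x_0^rE^r\sigma_3}S(z_-)\e^{2\i x_0^\ell E^\ell\sigma_3}\sigma_1$. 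Conjugation by $\sigma_1$ swaps the diagonal entries and swaps the off-diagonal entries, and the diagonal exponentials multiply entries by $\e^{\pm2\i(x_0^\ell E^\ell\mp x_0^rE^r)}$. Reading off entries against the parametrization \eqref{e:S} then gives \eqref{jumpS}. The only care needed is bookkeeping of the sign conventions for the exponents.

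\textbf{Non-overlapping parts.} Take $z\in\Sigma_1^r\setminus\Sigma_1^\ell$. Here $W_1^\ell$ is analytic across the arc, so $W_1^\ell(z_+)=W_1^\ell(z_-)$, while $W^r$ jumps by $J^r$. The column relation \eqref{e:W1-} at both sides reads
\[
W_1^\ell=a(z_\pm)W_1^r(z_\pm)+b_1(z_\pm)W_2^r(z_\pm).
\]
From $W^r(z_+)=W^r(z_-)J^r$ the columns satisfy $W_1^r(z_+)=\i\e^{-2\i x_0^rE^r}W_2^r(z_-)$ and $W_2^r(z_+)=\i\e^{2\i x_0^rE^r}W_1^r(z_-)$. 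Substituting these into the $+$ relation and comparing coefficients of the independent vectors $W_1^r(z_-),W_2^r(z_-)$ with the $-$ relation gives
\[
a(z_-)=\i\e^{2\i x_0^rE^r}b_1(z_+),\qquad b_1(z_-)=\i\e^{-2\i x_0^rE^r}a(z_+).
\]
These are exactly $a(z_\pm)=\mp\i\e^{2\i x_0^rE^r}b_1(z_\mp)$, up to the sign conventions, which I will track carefully.

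The arc $\Sigma_1^\ell\setminus\Sigma_1^r$ is handled the same way using \eqref{W2+}: $W_2^r$ is continuous there and $W^\ell$ jumps by $J^\ell$. The lower half-plane relations then follow either from the Schwarz symmetry $\sigma_2W^{s,*}\sigma_2=W^s$ or by repeating the computation with the second pair of column relations.

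\textbf{Ratios.} Dividing the two relations for $a(z_+)$ and $a(z_-)$ eliminates the exponential factor. For instance,
\[
\frac{a(z_+)}{a(z_-)}=\frac{-\i\, b_1(z_-)}{\i\, b_1(z_+)}=-\frac{b_1(z_-)}{b_1(z_+)},
\]
and likewise for the other three cases, which gives \eqref{e:ajump}.

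The main obstacle is consistency of conventions: which side is $+$, the direction of the jump $\i\sigma_1$ versus its inverse, and Schwarz reflection reversing orientation on $\Sigma_2$. Stated signs may need to be matched by choosing the orientation convention consistent with \eqref{jumpW}.
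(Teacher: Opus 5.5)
Your proposal is correct and is essentially the paper's proof. On $\Sigma^r\cap\Sigma^\ell$ the paper obtains exactly $S(z_+)=(J^r)^{-1}S(z_-)J^\ell$. On the non-overlapping arcs it evaluates the determinant formulas \eqref{e:scoefs} and substitutes the jump of the non-analytic column (e.g.\ $a(z_+)=\det[W_1^\ell(z_-),\i\e^{2\i x_0^rE^r}W_1^r(z_-)]=-\i\e^{2\i x_0^rE^r}b_1(z_-)$), which is the same computation as your coefficient comparison; your signs in fact already agree with the statement without any convention adjustment.

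The one point to make explicit concerns the lower arcs. There the statement's $b_1$ (on $\Sigma_2^\ell\setminus\Sigma_2^r$) and $b_2$ (on $\Sigma_2^r\setminus\Sigma_2^\ell$) denote the determinants $\det[W_1^r,W_1^\ell]$ and $\det[W_2^r,W_2^\ell]$. By the column relations these equal $b_2^*$ and $b_1^*$ there, so the relations you would obtain in terms of $b_2^*,b_1^*$ are exactly the stated ones.
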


\begin{proof}
For $z \in \Sigma^\ell \cap \Sigma^r$, we have the scattering relation \eqref{e:S.Sigma}. Applying the jump relations \eqref{jumpW} to that relation gives :
\begin{align}
\begin{split}
W^\ell(x ; z_{+})&=W^\ell(x ; z_{-})(\i \e^{2\i x_0^\ell E^\ell\sigma_3}\sigma_{1})=W^r(x;z_{-}) S(z_{-})(\i \e^{2\i x_0^\ell E^\ell\sigma_3}\sigma_{1})\\
&=W^r(x; z_{+}) (-\i \sigma_1\e^{-2\i x_0^r E^r \sigma_3}) S(z_{-})  (\i \e^{2\i x_0^\ell E^\ell\sigma_3}\sigma_{1})\,,
\end{split}
\end{align}
which, recalling \eqref{e:S}, is equivalent to \eqref{jumpS}.
For $z\in\Sigma\setminus(\Sigma^r\cap\Sigma^\ell)$,  the full scattering matrix is not available, nevertheless, we can still derive relations among the scattering data.  Using \eqref{e:scoefs}, we obtain
\bse
\begin{align*}
&a(z_+)=\det[W_1^\ell(x;z_+), W_2^r(x;z_+)]=\det[W_1^\ell(x;z_-), \i \e^{2\i x_0^r E^r} W_1^r(x;z_-)]=-\i \e^{2\i x_0^r E^r}  b_1(z_-)\,,&& z\in\Sigma^r_1\setminus\Sigma^\ell_1\,,\\
&a^*(z_+)=\det[W_1^r(x;z_+),W_2^\ell(x;z_+)]=\det[\i \e^{-2\i x_0^r E^r}  W_2^r(x;z_-), W_2^\ell(x;z_-)]=\i \e^{-2\i x_0^r E^r}  b_2(z_-)\,,&& z\in\Sigma^r_2\setminus\Sigma^\ell_2\,,\\
&a(z_+)=\det[W_1^\ell(x;z_+), W_2^r(x;z_+)]=\det[\i \e^{-2\i x_0^\ell E^\ell} W_2^\ell(x;z_-),  W_2^r(x;z_-)]=-\i \e^{-2\i x_0^\ell E^\ell}  b_2(z_-)\,, && z\in\Sigma^\ell_1\setminus\Sigma^r_1\,,\\
&a^*(z_+)=\det[W_1^r(x;z_+), W_2^\ell(x;z_+)]=\det[W_1^r(x;z_-), \i \e^{2\i x_0^\ell E^\ell}  W_1^\ell(x;z_-)]=\i \e^{2\i x_0^\ell E^\ell} b_1(z_-)\,,&& z\in\Sigma_2^\ell\setminus\Sigma_2^r\,.
\end{align*}
\ese
This completes the proof of \eqref{a+symmetry}.
The remaining relations can be proved in a similar way.
\end{proof}

To set up a Riemann--Hilbert problem, we introduce the sectionally meromorphic matrix $M(z;x)$:
\be \label{WB}
M(z;x)=
\begin{cases}
\left[\frac{W_{1}^\ell(x; z)}{a(z)}, \;W_{2}^r(x;z)\right] \e^{\i (x-x_0^r)z\sigma_3}, & z\in\Complex^+\setminus (\Sigma^r_1\cup\Sigma^\ell_1)\,,\vspace{5pt}\\
\left[W^r_1(x;z), \frac{W^\ell_2(x;z)}{a^*(z)}\right] \e^{\i(x-x_0^r)z\sigma_3}, & z\in\Complex^-\setminus(\Sigma^r_2\cup\Sigma^\ell_2)\,.
\end{cases}
\ee
Then $M(z;x)$ satisfies the following Riemann--Hilbert problem:
\begin{RHP}\label{RHP:M}
 Find a $2\times2$ matrix-valued function $M(z;x)$ which satisfies the following conditions:
 \begin{enumerate}
 	\item $M(z;x)$ is analytic  in $\Complex\setminus (\Real\cup\Sigma^r\cup\Sigma^\ell)$.
    \item $M(z;x)=I+\mathcal{O}(z^{-1})$, as $z\to\infty$.
    \item $M(z;x)$ admits the following jump condition:
    \be\label{jumpM}
    M(z_+;x)=M(z_-;x)\begin{cases}
    	\begin{pmatrix}
    		-\frac{\mathrm{i} b_2(z_-)}{a(z_+)}\e^{-2\mathrm{i} x_0^\ell E^\ell} & \mathrm{i}\e^{-2\mathrm{i} (x-x_0^r)z+2\mathrm{i} x_0^rE^r} \\
    		\frac{\mathrm{i} \e^{2\mathrm{i} (x-x_0^r)z-2\mathrm{i} x_0^\ell E^\ell}}{a(z_+)a(z_-)} & -\frac{\mathrm{i} b_1(z_-)}{a(z_-)}\e^{2\mathrm{i} x_0^rE^r}
    	\end{pmatrix}, & z\in\Sigma^r_1\cap\Sigma^\ell_1,
    	\vspace{6pt}
    	\\
    	\begin{pmatrix}
    		1 & 0 \\
    		\frac{\mathrm{i} \e^{2\mathrm{i}(x-x_0^r)z-2\mathrm{i} x_0^\ell E^\ell}}{a(z_-)a(z_+)} & 1
    	\end{pmatrix}, & z\in\Sigma^\ell_1\setminus\Sigma^r_1,
    	\vspace{6pt}
    	\\
    	\begin{pmatrix}
    		\frac{a(z_-)}{a(z_+)} & \mathrm{i} \e^{-2\mathrm{i}(x-x_0^r)z+2\mathrm{i} x_0^rE^r} \\
    		0 & \frac{a(z_+)}{a(z_-)}
    	\end{pmatrix}, & z\in\Sigma^r_1\setminus\Sigma^\ell_1,
    	\vspace{6pt}
    	\\
    	\begin{pmatrix}
    		\frac{1}{|a(z)|^2} & \frac{b^*(z)}{a^*(z)}\e^{-2\mathrm{i}(x-x_0^r)z}\\
    		\frac{b(z)}{a(z)}\e^{2\mathrm{i}(x-x_0^r)z} &1
    	\end{pmatrix}, & z\in \Real,
    	\vspace{6pt}
    	\\
    	\begin{pmatrix}
    		\frac{\mathrm{i} b_1^*(z_-)}{a^*(z_-)} \e^{-2\mathrm{i} x_0^rE^r}& \frac{\mathrm{i} \e^{-2\mathrm{i}(x-x_0^r)z+2\mathrm{i} x_0^\ell E^\ell}}{a^*(z_+)a^*(z_-)}\\
    		\mathrm{i} \e^{2\mathrm{i}(x-x_0^r)z-2\mathrm{i} x_0^rE^r} & \frac{\mathrm{i} b_2^*(z_-)}{a^*(z_+)}\e^{2\mathrm{i} x_0^\ell E^\ell}
    	\end{pmatrix}, & z\in \Sigma_2^r\cap\Sigma_2^\ell,
    	\vspace{6pt}
    	\\
    	\begin{pmatrix}
    		1 & \frac{\mathrm{i} \e^{-2\mathrm{i}(x-x_0^r)z+2\mathrm{i} x_0^\ell E^\ell}}{a^*(z_+)a^*(z_-)}\\
    		0 & 1
    	\end{pmatrix}, & z\in \Sigma^\ell_2\setminus\Sigma_2^r,
    	\vspace{6pt}
    	\\
    	\begin{pmatrix}
    		\frac{a^*(z_+)}{a^*(z_-)} &0\\
    	\mathrm{i}\e^{2\mathrm{i}(x-x_0^r)z-2\mathrm{i} x_0^rE^r} & \frac{a^*(z_-)}{a^*(z_+)}
    	\end{pmatrix}, & z\in \Sigma_2^r\setminus\Sigma_2^\ell\,.
    \end{cases}
    \ee
     \item 
     $M(z;x)$ satisfies Schwarz symmetry:
     \be
     M( z;x)=\sigma_2 \overline{M(\,\overline{z};x)} \sigma_2.
     \ee
     \item 
     $M(z;x)$ admits quartic root at $z\in\{z_1, \overline{z_1},z_2,\overline{z_2}\}$.
\end{enumerate}
\end{RHP}
\begin{proof}[Proof of Condition 3 in the Riemann--Hilbert Problem \ref{RHP:M}.]
When $z\in \Sigma^r_1\cap\Sigma^\ell_1$, the scattering relation \eqref{sr} and the jump condition of $W^s(x;z)$ \eqref{jumpW} both apply. Using these, we have
\begin{align*}
M_1(z_+;x)
 =\frac{W_1^\ell(x;z_+)}{a(z_+)}\e^{\i(x-x_0^r)z} 
&=\frac{\i W_2^\ell(x;z_-)} {a(z_+)}\e^{\i(x-x_0^r)z-2\i x_0^\ell E^\ell} \\
&=\left(\frac{\i W_2^r(x;z_-)}{a(z_+)a(z_-)}-\frac{\i b_2(z_-) W_1^\ell(x;z_-)}{a(z_-)a(z_+)}\right)\e^{\i(x-x_0^r)z-2\i x_0^\ell E^\ell}\\
&=\frac{\i }{a(z_+)a(z_-)}\e^{2\i(x-x_0^r)z-2\i x_0^\ell E^\ell}M_2(z_-;x)-\frac{\i b_2(z_-)}{a(z_+)}\e^{-2\i x_0^\ell E^\ell}M_{1}(z_-;x)\,,\\
M_2(z_+;x) 
 =W_2^r(x;z_+)\e^{-\i(x-x_0^r)z} 
&=\i  W_1^r(x;z_-)\e^{-\i(x-x_0^r)z+2\i x_0^r E^r} \\
&=\left(\frac{\i W_1^\ell(x;z_-)}{a(z_-)}-\frac{\i b_1(z_-) W_2^r(x;z_-)}{a(z_-)}\right)\e^{-\i(x-x_0^r)z+2\i x_0^r E^r}\\
        &=\i \e^{-2\i(x-x_0^r)z+2\i x_0^r E^r}M_1(z_-;x)-\frac{\i b_1(z_-)}{a(z_-)}\e^{2\i x_0^r E^r}M_2(z_-;x),
\end{align*}
which, together with \eqref{WB}, exactly gives the first line in \eqref{jumpM}.

On $\Sigma^\ell_1\setminus\Sigma^r_1$, 
plugging \eqref{W2+} into the definition of $M(z;x)$ in \eqref{WB}, we could rewrite $M_1(z_+;x)$ in terms of $M_2(z_-;x)$ and $M_1(z_-;x)$:
\begin{align*}
M_1(z_+;x)&=\frac{W_1^\ell(x;z_+)}{a(z_+)}\e^{\i(x-x_0^r)z}
=\frac{\i W_2^\ell(x;z_-)}{a(z_+)}\e^{\i(x-x_0^r)z-2\i x_0^\ell E^\ell}\\
&=\frac{\i W_2^r(x;z_-)\e^{\i(x-x_0^r)z-2\i x_0^\ell E^\ell}}{a(z_-)a(z_+)}-\frac{\i b_2(z_-)}{a(z_+)}\frac{W_1^\ell(x;z_-)}{a(z_-)}\e^{\i(x-x_0^r)z-2\i x_0^\ell E^\ell}\\
&=\frac{\i \e^{2\i (x-x_0^\ell)z-2\i x_0^\ell E^\ell}}{a(z_-)a(z_+)}M_2(z_-;x)-\frac{\i b_2(z_-)}{a(z_+)}\e^{-2\i x_0^\ell E^\ell}M_1(z_-;x),
\end{align*}
using the symmetry \eqref{a+symmetry}, we then have the jump in \eqref{jumpM}.
Analogously, on $z\in\Sigma^r_1\setminus\Sigma^\ell_1$, from \eqref{e:W1-}, we have
\begin{align*}
\begin{split}
M_1(z_+;x)&=\frac{W_1^\ell(x;z_+)}{a(z_+)}\e^{\i(x-x_0^r) z} = \frac{a(z_-)}{a(z_+)} \frac{W_1^\ell(x;z_-)}{a(z_-)}\e^{\i(x-x_0^r)z} = \frac{a(z_-)}{a(z_+)} M_1(z_-;x);\\
M_2(z_+;x)
%=W_2^r(x;z_+)\e^{-\i(x-x^r)z_+}
&=\i W_1^r(x;z_-)\e^{-\i(x-x_0^r)z+2\i x_0^r E^r}=\i \left[\frac{W_1^\ell(x;z_-)}{a(z_-)} -\frac{b_1(z_-)}{a(z_-)}W_2^r(x;z_-)\right]\e^{-\i(x-x_0^r)z+2\i x_0^r E^r}
\\
&=\i M_1(z_-;x)\e^{-2\i(x-x_0^r)z+2\i x_0^r E^r} -\frac{\i b_1(z_-)}{a(z_-)}\e^{2\i x_0^r E^r} M_2(z_-;x)\,,
\end{split}
\end{align*}
which completes the proof.
\end{proof}

In order to symmetrize the jump matrix of the Riemann--Hilbert problem \ref{RHP:M}, we introduce an auxiliary function $h(z)$ defined as follows:

\begin{proposition}\label{p:hep}
The function $h(z)$ defined by 
\begin{equation}\label{e:defh}
\begin{aligned}
h(z)=\exp &\left\{  \frac{1}{2\pi\i}  \left( 
\int_{\Sigma_1^\ell\setminus\Sigma_1^r}
 \frac{\log \left(-\frac{b_2(s_-)}{b_2(s_+)}\right)}{s-z} \d s 
+\int_{\Sigma_1^r\cap\Sigma_1^\ell}
 \frac{\log\left(-\frac{b_1(s_+)}{b_1(s_-)}\right)}{s-z} \d s
+\int_{\Sigma_1^r\setminus\Sigma_1^\ell}
 \frac{\log\left(-\frac{b_1(s_+)}{b_1(s_-)}\right)}{s-z} \d s 
\right. \right. \\ 
&\phantom{\big\{\frac{1}{2\pi\i}}  
-\int_{\Sigma_2^\ell\setminus\Sigma_2^r}
  \frac{\log\left(-\frac{b_2^*(s_-)}{b_2^*(s_+)}\right)}{s-z} \d s
+\int_{\Sigma_2^r\cap\Sigma_2^\ell}
  \frac{\log\left(-\frac{b_1^*(s_-)}{b_1^*(s_+)}\right)}{s-z} \d s 
-\int_{\Sigma_2^r\setminus\Sigma_2^\ell}
  \frac{\log\left(-\frac{b_1^*(s_+)}{b_1^*(s_-)}\right)}{s-z} \d s \\
& \left.\left.
\phantom{\mathclap{\int_{\Sigma_1^\ell\setminus\Sigma_1^r}
 \frac{\log \left(-\frac{b_2(s_-)}{b_2(s_+)}\right)}{s-z}} \frac{1}{2\pi\i}}  
-\int_{\Real}\frac{\log(1+|\frac{b(s)}{a(s)}|^2)}{s-z} \d s
\right) \right\} \,.
\end{aligned}
\end{equation}
has the following properties:
\begin{itemize}
    \item $h(z)$ is analytic in $\C \setminus (\R \cup \Sigma^\ell \cup \Sigma^r)$ and Schwarz symmetric $h^*(z) = h(z)$.
    \item For $z \in \R \cup \operatorname{int}( \Sigma_1 \cup \Sigma_2)$ the boundary values of $h(z)$ satisfy the jump relations:
    \be\label{e:jumph}
	\frac{h(z_+)}{h(z_-)}=
	\begin{cases}
		-\dfrac{b_2(z_-)}{b_2(z_+)}, & s\in \Sigma_1^\ell\setminus\Sigma_1^r,\\[0.6em]
		-\dfrac{b_1(z_+)}{b_1(z_-)}, & s\in \Sigma_1^r\cap\Sigma_1^\ell,\\[0.6em]
		-\dfrac{b_1(z_+)}{b_1(z_-)}, & s\in \Sigma_1^r\setminus\Sigma_1^\ell,\\[0.6em]
		% -\dfrac{b_2^*(z_+)}{b_2^*(z_-)}, & s\in \Sigma_2^\ell\setminus\Sigma_2^r,\\[0.9em]
		% -\dfrac{b_1^*(z_-)}{b_1^*(z_+)}, & s\in \Sigma_2^r\cap\Sigma_2^\ell,\\[0.9em]
		% -\dfrac{b_1^*(z_-)}{b_1^*(z_+)}, & s\in \Sigma_2^r\setminus\Sigma_2^\ell,\\[0.9em]
		\left(1+\left|\dfrac{b(z)}{a(z)}\right|^2\right)^{-1}, & s\in \mathbb{R}.
	\end{cases} 
    \ee
    \item $h(z) = 1 + \bigo{z^{-1}}$ as $z \to \infty$. 
    \item $h(z)$ has $1/4$-root singularities at $z_1^\ell$ and $z_2^\ell$ and quartic zeros at $z_1^r$ and $z_2^r$. Away from these points, $h(z)$ is bounded and nonzero.
    \end{itemize}
The corresponding jumps on the lower half-plane follow by Schwarz symmetry, and are therefore omitted.
\end{proposition}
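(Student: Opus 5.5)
The function $h$ is defined as a single exponential of Cauchy integrals, so the plan is to read off each property from standard facts about Cauchy transforms. I would write $h=\exp(\mathcal{C}[f])$, where $\mathcal{C}[f](z)=\frac{1}{2\pi\mathrm{i}}\int_\Gamma \frac{f(s)}{s-z}\,\mathrm{d}s$ and $f$ is the piecewise density on $\Gamma=\R\cup\Sigma^\ell\cup\Sigma^r$ appearing in \eqref{e:defh}.

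\textbf{Step 1: admissibility of the densities.} I first check that each density is locally integrable with controlled endpoint behaviour.
\begin{itemize}
\item On $\R$, $\log(1+|b/a|^2)$ is real and nonnegative. Since $a\neq0$ on $\R$ (the no-soliton assumption), it is continuous, and by \eqref{b+asympt} it is $\mathcal{O}(s^{-8})$ at infinity. It is therefore integrable.
\item On the arcs, the ratios $b_j(s_\pm)/b_j(s_\mp)$ are continuous and nonvanishing in the interior. This follows from Proposition \ref{propm} and Proposition \ref{S}: via \eqref{a+symmetry}, the $b_j$ are boundary values of $a$, which is nonzero there. Hence a continuous branch of the logarithm can be fixed along each arc.
\item The endpoint behaviour comes from \eqref{e:W.endpoints}. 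The functions $b_j$ inherit $\gamma^{\pm1}$ factors, so the ratio $b(s_+)/b(s_-)$ tends to a unimodular constant times $\gamma_+/\gamma_-=\mathrm{i}$ near each branch point. The logarithm is thus bounded near each endpoint.
\end{itemize}
With bounded (Hölder) densities on arcs and an integrable density on $\R$, Plemelj–Sokhotski applies in the interiors. This gives analyticity off $\Gamma$ and the additive jumps $\mathcal{C}_+-\mathcal{C}_-=f$, hence $h_+/h_-=e^{f}$. Reading off each piece yields \eqref{e:jumph}; the sign of the $\R$ term produces $(1+|b/a|^2)^{-1}$.

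\textbf{Step 2: symmetry and behaviour at infinity.} Schwarz symmetry follows by comparing the upper-arc densities with the lower ones, which are written as the conjugated ratios $b_j^*$ with orientation consistent with $\overline{\Sigma}=-\Sigma$. I would use $\overline{\mathcal{C}[f](\bar z)}=\mathcal{C}[\tilde f](z)$ with $\tilde f$ the reflected density. The real-line term is real, so it is self-symmetric. For the limit at infinity, the densities are integrable, so $\mathcal{C}[f](z)=\mathcal{O}(z^{-1})$ as $z\to\infty$ (nontangentially near $\R$, with the decay of the $\R$-density handling the rest). This gives $h=1+\mathcal{O}(z^{-1})$.

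\textbf{Step 3: local behaviour at the branch points.} This is the main obstacle. Near an endpoint $z_*$ of an arc carrying a density with limit $f(z_*)$, the Cauchy transform behaves like
\[
\mathcal{C}[f](z)=\pm\frac{f(z_*)}{2\pi\mathrm{i}}\log(z-z_*)+\mathcal{O}(1),
\]
with the sign depending on whether $z_*$ is the initial or terminal point. The logarithm of the ratio at the endpoint equals $\log(-\mathrm{i}^{\pm1}\cdot\text{phase})$. Given the branch of $\log$ fixed in Step 1, one must verify two things:
\begin{itemize}
\item the real part of $f(z_*)/(2\pi\mathrm{i})$ is $\mp1/4$, which produces $(z-z_*)^{\mp1/4}$;
\item the arrangement gives blow-up at $z^\ell_j$ but vanishing at $z^r_j$, which stems from whether $b_2$ (left data) or $b_1$ (right data) is inverted in the ratio.
\end{itemize}
This requires carefully tracking the quartic-root factors of $b_1,b_2$ from \eqref{e:W.endpoints}, including at the points where the arcs $\Sigma^\ell_1$ and $\Sigma^r_1$ meet and their overlap begins. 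There, two densities meet and the logarithmic terms partially cancel. I would first verify that $\mathrm{Im}f$ is continuous across these junction points, so that no spurious singularity appears there, and then compute the exponents at $z_j^\ell,z_j^r$ case by case for the configurations in Figure \ref{fig:cuts}. Away from the branch points $h$ is an exponential of a bounded function, so it is bounded and nonzero.
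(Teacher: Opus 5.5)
Your framework matches the paper's: Plemelj for analyticity and the jumps, integrability of the densities for $h\to1$, and endpoint asymptotics of Cauchy integrals. However, the last bullet, the only nontrivial part, is left as ``to be verified'', and the expectation you state for it is wrong in the overlapping configurations. In every configuration of Figure~\ref{fig:cuts} the overlap $\Sigma_1^r\cap\Sigma_1^\ell$ begins and ends at branch points; in (a), for instance, $z_2^\ell$ lies in the interior of $\Sigma_1^r$. So there are no junction points distinct from the $z_j^s$.

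At such a point the density carried by $\Sigma_1^r$ is discontinuous. $W_1^r(x;s_\pm)$ has distinct boundary values on both sides of $z_2^\ell$, while $W_1^\ell\approx(\gamma^\ell)^{-1}\widehat W_1^\ell$ is branched only along the overlap. Hence \eqref{e:W.endpoints} gives
\[
\lim_{\substack{s\to z_2^\ell\\ s\in\Sigma_1^r\cap\Sigma_1^\ell}}\frac{b_1(s_+)}{b_1(s_-)}=-\mathrm{i}\,c,\qquad
\lim_{\substack{s\to z_2^\ell\\ s\in\Sigma_1^r\setminus\Sigma_1^\ell}}\frac{b_1(s_+)}{b_1(s_-)}=c,\qquad
c=\frac{\det\bigl[W_1^r(x;(z_2^\ell)_+),\widehat W_1^\ell(x;z_2^\ell)\bigr]}{\det\bigl[W_1^r(x;(z_2^\ell)_-),\widehat W_1^\ell(x;z_2^\ell)\bigr]},
\]
where $c$ is an uncontrolled nonzero constant, neither unimodular nor equal to $1$ in general. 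Consequences:
\begin{itemize}
\item Neither adjacent density has endpoint value $\pm\mathrm{i}\pi/2$. Your criterion that the real part of $f(z_*)/(2\pi\mathrm{i})$ equals $\mp\frac14$ therefore fails for each of them in general.
\item Only the difference of the two limits, $-\mathrm{i}\pi/2$, is controlled, and the factor $(z-z_2^\ell)^{-1/4}$ comes precisely from this jump.
\item You remark that the logarithmic terms partially cancel there, but then propose checking that $\Im f$ is continuous across the junction ``so that no spurious singularity appears''. That is false. If it were true, it would remove exactly the singularity you must produce.
\end{itemize}
The missing step is the paper's Case~2. Subtract $\mathrm{i}\pi/2$ on $\Sigma_1^r\setminus\Sigma_1^\ell$, so that $c$ cancels and the density becomes continuous on all of $\Sigma_1^r$. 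Then read the exponent off the explicit remainder $\frac14\log\frac{z-z_2^r}{z-z_2^\ell}$.

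Even where $z_*$ is a genuine endpoint of a single density (the paper's Case~1), your Step~1 claim is insufficient. You say the ratio tends to ``a unimodular constant times $\mathrm{i}$'', but a factor $\e^{\mathrm{i}\phi}$ would shift the exponent by $\phi/2\pi$. The exponent is exactly $\mp\frac14$ because the partner column is analytic there: for example, $W_2^r$ near $z_2^\ell\notin\Sigma_1^r$. The two determinants in the ratio then coincide, and $b_2(s_-)/b_2(s_+)\to\gamma^\ell_+/\gamma^\ell_-=\mathrm{i}$ exactly; you need to show this cancellation, not just a unimodular limit.

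Finally, \eqref{e:W.endpoints} only gives continuity of the densities at the endpoints, not H\"older continuity. The remainder in your endpoint expansion is therefore $o(\log|z-z_*|)$ rather than $\mathcal{O}(1)$. This is why the paper proves a separate lemma for merely continuous densities instead of invoking the classical H\"older theory.
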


\begin{proof}
The first two properties of $h$ follow directly from the Sokhotski-Plamelj formula for Cauchy transforms. The third property follows from observing that the density in each Cauchy transform is an $L^1$ function over its support. This is trivial over every contour but $\R$, where the bound follows from \eqref{e:scat.asympt}. 
The final property concerning the behavior of $h$ near the endpoints of the spectral bands is more involved. The calculation is similar s at each endpoint.  We present the details as $z \to z_2^\ell$. 

There are two possibilities. Either: (1) $z_2^\ell\in \Sigma_1^\ell \setminus \Sigma_1^r$ and is bounded away from $\Sigma_1^\ell\cap\Sigma_1^r$; or (2) $z_2^\ell$ is an endpoint of the closed set $\Sigma_1^\ell\cap\Sigma_1^r$.  
In each case we compute the endpoint jumps $b_1(z_+)/b_1(z_-)$ and $b_2(z_-)/b_2(z_+)$, and show that it produces the same local exponent in the Cauchy-type representation \eqref{e:defh} for $h(z)$.

\paragraph{Case 1. $z_2^\ell \in \Sigma_1^\ell\setminus\Sigma_1^r$.} 
In this case, each integral in \eqref{e:defh} defines a locally analytic function for  $z$ near $z_2^\ell$ except for the integral over $\Sigma_1^\ell\setminus\Sigma_1^r$. Along this contour the density of the Cauchy integral $b_2(z_-)/b_2(z_+)$ is continuous with a well-defined limit as $z \to z_2^\ell$ since the ratio eliminates the singular growth of $b_2(z)$ at $z_2^\ell$: 
\be\label{e:b2jump}
 \lim_{\substack{s\to z_2^\ell\\ s\in \Sigma_1^\ell\setminus\Sigma_1^r}}
\frac{b_2(s_-)}{b_2(s_+)}
=
\lim_{\substack{s\to z_2^\ell\\ s\in \Sigma_1^\ell\setminus\Sigma_1^r}}
\frac{\gamma^\ell(s_-)^{-1}}{\,\gamma^\ell(s_+)^{-1}}
\frac{\det\!\bigl[\,W_2^r(x;s),\,\gamma^\ell(s_-)W_2^\ell(x;s_-)\,\bigr]}{\det\!\bigl[\,W_2^r(x;s),\,\gamma^\ell(s_+)W_2^\ell(x;s_+)\,\bigr]}
=\i\,
\frac{\det\!\bigl[\,W_2^r(x;z_2^\ell),\,\widehat{W}_2^\ell(x;z_2^\ell)\,\bigr]}
{\det\!\bigl[\,W_2^r(x;z_2^\ell),\,\widehat{W}_2^\ell(x;z_2^\ell)\,\bigr]}
=\i,
\ee
where we've made use of the bounds \eqref{e:W.endpoints} in the last step. Using standard estimates of Cauchy integrals, as $z \to z_2^\ell$ non-tangentially to the contour $\Sigma_1^\ell\setminus\Sigma_1^r$: 
\begin{multline}
   \frac{1}{2\pi \i} \int_{\Sigma_1^\ell\setminus\Sigma_1^r} \log \left(-\frac{b_2(s_-)}{b_2(s_+)} \right) \frac{\d s}{s-z} 
    = \frac{1}{2\pi \i} \int_{\Sigma_1^\ell\setminus\Sigma_1^r}\left( -\frac{\i\pi}{2} + \log \left(\i\frac{b_2(s_-)}{b_2(s_+)} \right) \right)\frac{\d s}{s-z} \\
    =-\frac{1}{4} \log(z-z_2^\ell)  + o\left(\log|z-z_2^\ell|\right).
\end{multline}
It follows that $h(z) = (z-z_2^\ell)^{-1/4} h_0(z)$ for a function $h_0(z)$ tending to a definite limit as $z \to z_2^\ell$. 
   
\paragraph{Case 2. $z_2^\ell\in\Sigma_1^\ell\cap\Sigma_1^r$.}   The assumption that the endpoints of $\Sigma^r$ and $\Sigma^\ell$ do not coincide guarantees that $z_2^\ell$, in this case, is the initial endpoint of $\Sigma_1^r \setminus \Sigma_1^\ell$ and the terminal point of $\Sigma_1^\ell\cap\Sigma_1^r$ (at least in a neighborhood of $z_2^\ell$). As such, the Cauchy integral along each of these contours in \eqref{e:defh} is singular as $z \to z_2^\ell$. The density of each Cauchy integral is continuous along the given contour with a well defined limit as $z \to z_2^\ell$:
\bse
\begin{gather}\label{e:b1jump1}
 \lim_{\substack{s\to z_2^\ell\\ s\in \Sigma_1^r\setminus\Sigma_1^\ell}}
\frac{b_1(s_+)}{b_1(s_-)}
=
\lim_{\substack{s\to z_2^\ell\\ s\in \Sigma_1^r\setminus\Sigma_1^\ell}}
\frac{\gamma^\ell(s)^{-1}}{\gamma^\ell(s)^{-1}}\,
\frac{\det\!\bigl[\,W_1^r(x;s_+),\,\gamma^\ell(s)W_1^\ell(x;s)\,\bigr]}
{\det\!\bigl[\,W_1^r(x;s_-),\,\gamma^\ell(s) W_1^\ell(x;s)\,\bigr]}
% \\[0.8ex]
% &=
=\frac{\det\!\bigl[\,W_1^r(x;z_{2+}^r),\,\widehat{W}_1^\ell(x;z_2^r)\,\bigr]}
{\det\!\bigl[\,W_1^r(x;z_{2-}^r),\,\widehat{W}_1^\ell(x;z_2^r)\,\bigr]}. \\
\label{e:b1jump2}
 \lim_{\substack{s\to z_2^\ell\\ s\in \Sigma_1^r\cap\Sigma_1^\ell}}
\frac{b_1(s_+)}{b_1(s_-)}
=
\lim_{\substack{s\to z_2^\ell\\ s\in \Sigma_1^r\cap\Sigma_1^\ell}}
\frac{\gamma^\ell(s_+)^{-1}}{\gamma^\ell(s_-)^{-1}}\,
\frac{\det\!\bigl[\,W_1^r(x;s_+),\,\gamma^\ell(s_+)W_1^\ell(x;s)\,\bigr]}
     {\det\!\bigl[\,W_1^r(x;s_-),\,\gamma^\ell(s_-) W_1^\ell(x;s)\,\bigr]}
=-\i\,
\frac{\det\!\bigl[\,W_1^r(x;z_{2+}^r),\,\widehat{W}_1^\ell(x;z_2^r)\,\bigr]}{\det\!\bigl[\,W_1^r(x;z_{2-}^r),\,\widehat{W}_1^\ell(x;z_2^r)\,\bigr]}.
\end{gather}
\ese
These calculations show that the function 
\[
    f(s) = \begin{cases}
        \log \left( -\frac{b_1(s_+)}{b_1(s_-)} \right), & s \in \Sigma_1^\ell \cap \Sigma_1^r, \\
        \log \left( -\frac{b_1(s_+)}{b_1(s_-)} \right) -\frac{i\pi}{2}, & s \in \Sigma_1^r \setminus  \Sigma_1^\ell,
    \end{cases}
\]
is continuous along $\Sigma_1^r$. 
% Then using the same Cauchy 
% \[
%     \int_{\Sigma_1^r} \frac{f(s)}{s-z} \d s + \int_{\Sigma_1^r \setminus \Sigma_1^\ell} \frac{ - \i \pi/2}{s-z} \d s
% \]
Using $f$, we rewrite the singular terms in $\log h$ as $z \to z_2^\ell$ as
\begin{equation*}
\frac{1}{2\pi \i}\int_{\Sigma_1^r\cap\Sigma_1^\ell}\frac{\log\!\left(-\frac{b_1(s_+)}{b_1(s_-)}\right)}{s-z}\,\d s
+\frac{1}{2\pi \i}\int_{\Sigma_1^r\setminus\Sigma_1^\ell}\frac{\log\!\left(-\frac{b_1(s_+)}{b_1(s_-)}\right)}{s-z}\,\d s  
% =\frac{1}{2\pi \i}\int_{\Sigma_1^r}\frac{f(s)}{s-z}\,\d s + \frac{1}{2\pi \i}\int_{\Sigma_1^r\setminus\Sigma_1^\ell}\frac{\i\pi/2}{s-z}\,\d s. 
=\frac{1}{2\pi \i}\int_{\Sigma_1^r}\frac{f(s)}{s-z}\,\d s +\frac{1}{4}  \log \left(\frac{z-z_2^{r}}{z-z_2^{\ell}} \right)\,,
\end{equation*}
with the final logarithm cut along $\Sigma_1^r \setminus \Sigma_1^\ell$. Since $f(s)$ is continuous on $\Sigma_1^r$, the Cauchy integral
$\int_{\Sigma_1^r}\frac{f(s)}{s-z}\,\d s$ has well-defined bounded non-tangential limits as $z \to (z_2^\ell)_\pm$, that is from either side of the contour $\Sigma_1^r$. 
It follows that $h(z)=(z-z_2^\ell)^{-1/4}h_0(z)$ with $h_0$ bounded and tending to a definite limit as $z \to z_2^\ell$.

    Case 1 and Case 2 imply that $z_2^\ell$ is a quartic root singularity independent of whether $z_2^\ell\in\Sigma_1^\ell\setminus\Sigma_1^r$ or $z_2^\ell\in\Sigma_1^\ell\cap\Sigma_1^r$.
    The other endpoints are treated analogously.
\end{proof}
\begin{remark}
    When an endpoint of $\Sigma^r$ coincides with an endpoint of $\Sigma^\ell$ the singular behavior changes. This situation is not considered in this manuscript.
\end{remark}
Now we are going to split $a(z)$ into $a_1(z)$ and $a_2(z)$ by defining 
\begin{subequations}
\label{e:defa12}
\begin{align}
   a_1(z)&=(a(z)h(z))^{1/2}\e^{\frac{\i}{2}(x_0^\ell-x_0^r)z}, \qquad z\in\Complex^+,\\
   a_2(z)&=(a(z)/h(z))^{1/2}\e^{-\frac{\i}{2}(x_0^\ell-x_0^r)z},\qquad z\in\Complex^+\,.
\end{align}
\end{subequations}
It is straightforward to show the following basic properties of $a_1(z)$ and $a_2(z)$:
\begin{proposition}\label{propa12}
 Let $a_1(z)$ and $a_2(z)$ be as in \eqref{e:defa12}, then we have 
 \begin{enumerate}
     \item $a(z)=a_1(z)a_2(z)$.
     \item
      $a_1(z)=\e^{\mathrm{i}(x_0^\ell-x_0^r)z}(1+\mathcal{O}(z^{-1}))$ and $a_2(z)=1+\mathcal{O}(z^{-1})$,  as $z\to\infty$.
     \item $a_1(z)$ and $a_2(z)$ admit the following relations
  \begin{subequations}\label{e:a12jump}
     \begin{gather}
         a_1(z_+)=a_1(z_-), \quad z\in\Sigma^r\setminus\Sigma^\ell,\qquad
        a_2(z_+)=a_2(z_-), \quad z\in\Sigma^\ell\setminus\Sigma^r.\\
        \frac{b_2(z_-)\e^{-2\mathrm{i} x_0^\ell E^\ell}}{a_1(z_+)a_2(z_-)} =\frac{b_1(z_-)\e^{2\mathrm{i} x_0^rE^r}}{a_1(z_-)a_2(z_+)}, \quad z\in \Sigma^\ell\cap\Sigma^r.\\
    |a_1(z)|^{-2}=1+\left|\dfrac{b(z)}{a(z)}\right|^2, \quad |a_2(z)|^2=1, \quad z\in\Real.
     \end{gather}
    \end{subequations}
    \item 
    $a_1(z)$ has at worst quartic root singularities at $\{z_1^\ell, z_2^\ell\}$, and $a_2(z)$ has at worst quartic root singularities at $\{z_1^r, z_2^r\}$.
 \end{enumerate}
\end{proposition}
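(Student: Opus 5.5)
The plan is to verify each item directly from the definitions \eqref{e:defa12}, using the jump relations for $a$ in Proposition~\ref{S} (in particular \eqref{e:ajump}), the jumps of $h$ in Proposition~\ref{p:hep}, the large-$z$ behaviour \eqref{a+asympt}, and the unitarity of $S(z)$ on $\R$. Before anything else, I would fix the branch of the square roots. By the no-soliton assumption, $a$ has no zeros in $\C^+$, and by Proposition~\ref{p:hep} the function $h$ is nonzero and bounded away from the endpoints. Hence $a h\,\e^{\i(x_0^\ell-x_0^r)z}$ and $(a/h)\e^{-\i(x_0^\ell-x_0^r)z}$ are nonvanishing on the (simply connected after cuts) region $\C^+\setminus\Sigma_1$ and tend to $1$ at infinity. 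I therefore take the branch that tends to $1$ there.

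Item 1 is then immediate, since the exponentials cancel in the product $a_1a_2$. Item 2 follows from $a(z)\e^{-\i(x_0^\ell-x_0^r)z}=1+\mathcal O(z^{-1})$ and $h=1+\mathcal O(z^{-1})$.

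For the jumps in item 3, I would compute $a_1(z_+)^2/a_1(z_-)^2=\frac{a(z_+)h(z_+)}{a(z_-)h(z_-)}$ piece by piece.
\begin{itemize}
\item On $\Sigma_1^r\setminus\Sigma_1^\ell$, \eqref{e:ajump} gives $a_+/a_-=-b_1(z_-)/b_1(z_+)$, while \eqref{e:jumph} gives $h_+/h_-=-b_1(z_+)/b_1(z_-)$. The product is $1$, so $a_1^2$ has no jump.
\item On $\Sigma_1^\ell\setminus\Sigma_1^r$, $a_+/a_-=-b_2(z_-)/b_2(z_+)=h_+/h_-$. Hence $a/h$, and so $a_2^2$, has no jump.
\end{itemize}
To pass from squares to the functions themselves, note that the ratio $a_1(z_+)/a_1(z_-)$ is a continuous function equal to $\pm1$ on the arc, so it is constant. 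I would then argue it equals $+1$ by continuity of the chosen branch, e.g.\ from an endpoint or from a consistent choice of branch across the arc. The lower half-plane follows by the Schwarz symmetry $h^*=h$.

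On $\Sigma^\ell\cap\Sigma^r$, I would write $a_1(z_+)a_2(z_-)/(a_1(z_-)a_2(z_+))$ as $\sqrt{(a_+h_+a_-/h_-)/(a_-h_-a_+/h_+)}=h_+/h_-=-b_1(z_+)/b_1(z_-)$. On the other side I would use \eqref{jumpS}, which gives $b_1(z_+)=-b_2(z_-)\e^{-2\i(x_0^\ell E^\ell+x_0^rE^r)}$. Substituting this yields exactly the stated identity. The same sign-of-branch issue appears here, and I would handle it the same way.

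On $\R$, $\det S=1$ gives $|a|^2+|b|^2=1$ wait—more precisely $|a|^2-(-|b|^2)$, i.e.\ $|a|^2+|b|^2=1$. Also $h$ has real boundary values $h_\pm$ with $h_+/h_-=(1+|b/a|^2)^{-1}$ and $h_-=\ov{h_+}$ by symmetry. I would interpret $a_1,a_2$ on $\R$ as boundary values from $\C^+$, so $|h_+|^2=h_+h_-$. Using the Plemelj formula for the real-line part of $\log h$, the modulus $|h_+|$ equals $(1+|b/a|^2)^{-1/2}$ times contributions from the band integrals. Those band contributions would have to be shown unimodular on $\R$ via the Schwarz pairing of the $\Sigma_1$ and $\Sigma_2$ integrals, which is the delicate point.

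Combining with $|a|^{-2}=1+|b/a|^2$, this gives $|a_1|^{-2}=|a|^{-1}|h|^{-1}=1+|b/a|^2$ and $|a_2|^2=|a|/|h|=1$. The exponentials have modulus one on $\R$.

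Item 4 follows from the endpoint behaviour. By Theorem~\ref{theorem1}, $a$ has at worst quartic-root behaviour. By Proposition~\ref{p:hep}, $h$ blows up like $(z-z_j^\ell)^{-1/4}$ at the left endpoints and vanishes like $(z-z_j^r)^{1/4}$ at the right ones. Combining these local exponents in $ah$ and $a/h$ and taking square roots gives the stated bounds.

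The main obstacle I expect is the modulus computation on $\R$, specifically verifying that the band Cauchy integrals contribute zero real part to $\log h$ on the real line. I would first try to pair each $\Sigma_1$ integral with its conjugate $\Sigma_2$ integral and show the combined kernel $\frac{1}{s-z}-\frac{1}{\bar s-z}$ times conjugated densities is purely imaginary for real $z$. A second, lesser obstacle is fixing the branch signs in the square-root jump identities.
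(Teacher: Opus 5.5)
Your handling of items 1, 2, 4 and of the band relations in item 3 is the paper's argument: substitute \eqref{e:ajump}, \eqref{e:jumph} and the $(2,1)$ entry of \eqref{jumpS} into \eqref{e:defa12}. The paper's proof stops there and never treats the two identities on $\R$. Your attempt at those identities has a genuine gap.

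The symmetry you use, $h_-=\overline{h_+}$ on $\R$ (that is, $h^*=h$, as asserted in Proposition~\ref{p:hep}), cannot be right. It makes $h_+/h_-=h_+/\overline{h_+}$ unimodular, whereas \eqref{e:jumph} gives $h_+/h_-=(1+|b/a|^2)^{-1}<1$ wherever $b\neq0$. It also clashes with the step you leave open. A Schwarz-symmetric band part of $\log h$ is \emph{real} on $\R$, not purely imaginary. Then $|h_+|$ would carry an extra positive factor $\neq1$, and both $|a_2|=1$ and $|a_1|^{-2}=1+|b/a|^2$ would fail. As written, the proposal does not establish these identities.

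The missing observation is that \eqref{e:defh} actually gives $h^*=h^{-1}$. Since $\overline{\Sigma}=-\Sigma$, we have $\Sigma_2=-\overline{\Sigma_1}$ and the $+$ sides correspond under conjugation. For $C(z)=\frac{1}{2\pi\i}\int_{\Sigma_1}\frac{F(s)}{s-z}\,\d s$ this gives $\overline{C(\overline z)}=\frac{1}{2\pi\i}\int_{\Sigma_2}\frac{\overline{F(\overline w)}}{w-z}\,\d w$. Each $\Sigma_2$ density in \eqref{e:defh} is, up to the branch of the logarithm, $-\overline{F(\overline w)}$. For example, $-\log(-b_1^*(s_+)/b_1^*(s_-))$ on $\Sigma_2^r\setminus\Sigma_2^\ell$ pairs with $\log(-b_1(s_+)/b_1(s_-))$ on $\Sigma_1^r\setminus\Sigma_1^\ell$. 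The real-line density is real. So every term of $\log h$ satisfies $(\log h)^*=-\log h$, which is exactly what your proposed pairing computation would show.

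Consequently $h_-=1/\overline{h_+}$ on $\R$, so $|h_+|^2=h_+/h_-=(1+|b/a|^2)^{-1}=|a|^2$ by $|a|^2+|b|^2=1$. This gives $|a_1|^2=|a|\,|h_+|=|a|^2$ and $|a_2|^2=|a|/|h_+|=1$ directly, with no need to split $h$ into real-line and band pieces.

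A smaller point: $\C^+\setminus\Sigma_1$ is doubly connected, not simply connected. The square roots in \eqref{e:defa12} therefore exist only if the winding number of $a$ around $\Sigma_1$ is even. Since $\log h$ is single-valued, this is the same as the index of $a\,\e^{-\i(x_0^\ell-x_0^r)z}$ along $\R$ being even. The paper takes this for granted as well.
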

\begin{proof}
Condition 1 follows directly from the definition \eqref{e:defa12}. Condition 2 follows from the asymptotics of $a(z)$ in \eqref{a+asympt}.
We now prove \eqref{e:a12jump}. As $z\in\Sigma^r\setminus \Sigma^\ell$, using \eqref{e:ajump} and \eqref{e:defa12}, we have that 
\begin{align*}
a_1(z_+)&=[a(z_+)h(z_+)]^{\frac{1}{2}}\e^{\frac{\i}{2}(x_0^\ell-x_0^r)z}= \left(\frac{b_1(z_-)}{b_1(z_+)}a(z_-)\frac{b_1(z_+)}{b_1(z_-)}h(z_-)
\right)^{\frac{1}{2}}\e^{\frac{\i}{2}(x_0^\ell-x_0^r)z}
=a_1(z_-).
\end{align*}
The proof of $a_2(z_+)=a_2(z_-)$ as $z\in\Sigma^\ell\setminus \Sigma^r$ follows in a similar manner. As $z\in\Sigma^\ell\cap \Sigma^r$, we have
\begin{align*}
\frac{b_2(z_-)\e^{-2\i x_0^\ell E^\ell}}{a_1(z_+)a_2(z_-)}=\frac{b_2(z_-)\e^{-2\i x_0^\ell E^\ell}}{(a(z_+)h(z_+))^{\frac{1}{2}}(a(z_-)/h(z_-))^{\frac{1}{2}}}=
\frac{b_1(z_-)\e^{2\i (x_0^\ell E^\ell+x_0^rE^r)}\e^{-2\i x_0^\ell E^\ell}}{(a(z_-)h(z_-))^{\frac{1}{2}}(a(z_+)/h(z_+))^{\frac{1}{2}}}=\frac{b_1(z_-)\e^{2\i x_0^r E^r}}{a_1(z_-)a_2(z_+)}.
\end{align*}

Finally, the behavior of $a_1(z)$ and $a_2(z)$ at the endpoints follows directly from  the first item  in Theorem~\ref{S} and Proposition \ref{p:hep}.
\end{proof}
Now we define new reflection coefficients $r_1(z)$,  $r_2(z)$ and $\rho(z)$ as follows:
 \bse\label{e:r12}
\begin{gather}
	r_1(z)= \frac{ a_2(z_-)}{a_1(z_-)} \frac{\e^{-2\i x_0^\ell E^\ell-2\i x_0^r z}}{a_1(z_+)a_2(z_-)-\i b_2(z_-)\e^{-2\i x_0^\ell E^\ell}}\,, \qquad z\in\Sigma_1^\ell,\label{e:defr1}\\
	r_2(z)= \frac{a_1(z_+)}{a_2(z_+)}\frac{\e^{2\i x_0^rE^r+2\i x_0^r z}}{a_1(z_+)a_2(z_-)-\i b_2(z_-)\e^{-2\i x_0^\ell E^\ell}}, \qquad z\in\Sigma_1^r, \label{e:defr2}\\
	\rho(z)=\frac{b(z)}{a_1(z)a_2^*(z)}\e^{-2\i x_0^r z}, \qquad z\in \R. \label{e:rho}
\end{gather}
\ese
Then, by the first item  in Theorem \ref{S} (the one with singularities of $a(z)$ and $b(z)$) and Proposition \ref{p:hep}, we obtain:
\begin{lemma}
 $r_1(z)$ has  square-root zeros at $z_1^\ell$ and $z_2^\ell$, and is continuous on $\Sigma_1^\ell$, and $r_2(z)$ has square-root zeros at $z_1^r$ and $z_2^r$, and is continuous on $\Sigma_1^r$. 
 \end{lemma}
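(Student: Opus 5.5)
The plan is to read off the local behaviour of each factor in \eqref{e:defr1}--\eqref{e:defr2} at the band endpoints, using only Proposition~\ref{propm} (item on singularities), Proposition~\ref{p:hep} and Proposition~\ref{propa12}. Continuity in the interior of $\Sigma_1^\ell$ (resp. $\Sigma_1^r$) comes first. The boundary values $W^s(x;z_\pm)$ are continuous on $\operatorname{int}(\Sigma^s)$ by Proposition~\ref{propm}, so $a(z_\pm)$, $b_1(z_\pm)$, $b_2(z_\pm)$ are continuous there by \eqref{e:scoefs}. The function $h$ is a Cauchy transform with continuous density, hence has continuous boundary values away from endpoints. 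Therefore $a_1(z_\pm)$ and $a_2(z_\pm)$ are continuous. It remains to show that the denominator $D(z):=a_1(z_+)a_2(z_-)-\i b_2(z_-)\e^{-2\i x_0^\ell E^\ell}$ does not vanish. For this I would use the jump relations \eqref{a+symmetry}, which give $\i b_2(z_-)\e^{-2\i x_0^\ell E^\ell}=-a(z_+)$ on $\Sigma_1^\ell\setminus\Sigma_1^r$, so that $D=a_1(z_+)a_2(z_-)+a_1(z_+)a_2(z_+)$. Since $a_2$ has no jump there, $D=2a(z_+)$, which is nonzero by the no-soliton assumption ($a\neq0$ on the closed upper half-plane away from endpoints). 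On $\Sigma_1^\ell\cap\Sigma_1^r$ I would use the identity in \eqref{e:a12jump} together with \eqref{jumpS} to rewrite $D$ similarly as a product of $a_1(z_+)$ and a nonvanishing combination.

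Next comes the endpoint analysis, which is the main content. I would work at $z_2^\ell$ for $r_1$; the other endpoints are analogous, with $z_1$ using $\gamma^{-1}$ in place of $\gamma$. By Proposition~\ref{propa12}, item 4, together with the local description of $h$ in Proposition~\ref{p:hep}, near $z_2^\ell$ one has $a_1\sim (z-z_2^\ell)^{-1/4}$. Indeed, $a$ has a $(z-z_2^\ell)^{-1/4}$ singularity inherited from $W_1^\ell$ via \eqref{e:W.endpoints}, and $h$ contributes $(z-z_2^\ell)^{-1/4}$, so $(ah)^{1/2}\sim (z-z_2^\ell)^{-1/4}$. Meanwhile $a_2=(a/h)^{1/2}$ is bounded and nonzero there, because the two quartic singularities cancel. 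Likewise $b_2(z_-)$ behaves like $(z-z_2^\ell)^{-1/4}$, since $W_2^\ell$ carries the factor $\gamma^{-1}$ and $W_2^r$ is regular at $z_2^\ell$. Then $D\sim c\,(z-z_2^\ell)^{-1/4}$; the worry is that the leading coefficients cancel. Using the computation $D=2a(z_+)$ above and the endpoint limit $\gamma a\to\det[\widehat W_1^\ell,W_2^r]\neq0$, I expect $D\sim(z-z_2^\ell)^{-1/4}$ exactly. Hence $r_1=\frac{a_2(z_-)}{a_1(z_-)}\frac{\e^{\cdots}}{D}\sim (z-z_2^\ell)^{1/4}\cdot(z-z_2^\ell)^{1/4}=(z-z_2^\ell)^{1/2}$, which is a square-root zero. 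For $r_2$ at $z_2^r$ the roles swap: $a_1$ is bounded and nonzero, while $a_2$ and the denominator each carry $(z-z_2^r)^{-1/4}$. This again gives $r_2\sim (z-z_2^r)^{1/2}$.

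The hardest step is showing that the leading coefficient of the denominator is nonzero at each endpoint. This matters most in Case 2 of Proposition~\ref{p:hep}, where the endpoint of one band lies inside the overlap $\Sigma_1^\ell\cap\Sigma_1^r$. There I would mimic the proof of Proposition~\ref{p:hep}: multiply through by the appropriate power of $\gamma^\ell$, pass to the limits $\widehat W^\ell(x;z_2^\ell)$ from \eqref{e:W.endpoints}, and express the limit of $(z-z_2^\ell)^{1/4}D$ as a nonzero multiple of $\det[\widehat W_1^\ell,W_2^r]$. Nonvanishing of that determinant would follow from the assumption that $a$ has no zeros up to the boundary, extended to endpoints by continuity of $\gamma a$.
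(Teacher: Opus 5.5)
\paragraph{Assessment.} Your approach is the paper's: its proof is exactly this exponent count, using the quartic-root behaviour of $a$ and $b_2$ from Theorem~\ref{theorem1} and of $h$ from Proposition~\ref{p:hep}. Your reduction $D=2a(z_+)$ on $\Sigma_1^\ell\setminus\Sigma_1^r$ is correct. There is, however, a genuine gap at the step where ``at worst quartic'' becomes an exact exponent, and a second one in your Case~2 plan.

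\paragraph{Non-degeneracy at the band edges.} Exact exponents require $\lim_{z\to z_j^\ell}\gamma^\ell(z)^{\pm1}a(z)=\det[\widehat W_1^\ell(x;z_j^\ell),W_2^r(x;z_j^\ell)]\neq0$, for both one-sided limits if $z_j^\ell\in\Sigma_1^r$.
\begin{itemize}
\item Your justification (no zeros of $a$ plus continuity of $\gamma a$) is a non sequitur. A continuous function that is nonzero on a punctured neighbourhood can vanish at the centre. Since $\gamma^\ell(z)\to0$ as $z\to z_2^\ell$, the limit $\gamma^\ell a\to0$ is compatible with $a$ staying away from $0$.
\item This is not a technicality. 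Suppose $\gamma^\ell a$ vanishes at $z_2^\ell$ to first order in the local parameter $(z-z_2^\ell)^{1/2}$ (an edge resonance). Then $|a|\asymp|z-z_2^\ell|^{1/4}$, and the ratio $-b_2(s_-)/b_2(s_+)=a(s_+)/a(s_-)$ from \eqref{e:ajump} tends to $+\mathrm{i}$ instead of $-\mathrm{i}$. Consequently $h\sim(z-z_2^\ell)^{+1/4}$, $a_1\sim(z-z_2^\ell)^{1/4}$, and $r_1=\mathrm{e}^{\cdots}/(2a_1(z_-)a_1(z_+))\sim(z-z_2^\ell)^{-1/2}$. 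That is a square-root singularity, not a zero.
\item So non-resonance must be imposed at every band edge. The paper assumes it tacitly too, for example when it divides by $\det[W_2^r,\widehat W_2^\ell]$ in \eqref{e:b2jump}.
\item Alternatively, read the no-zero assumption as including the endpoints ($a(z)\not\to0$) and prove a rate $\gamma^{\pm1}a-\lim\gamma^{\pm1}a=o(|z-z_j^\ell|^{1/4})$. The $o(1)$ in \eqref{e:W.endpoints} does not give this.
\end{itemize}

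\paragraph{Case~2 ($z_2^\ell$ interior to $\Sigma_1^r$).} Your plan cannot close as stated, because limits of $\widehat W^\ell$ do not see the branch of the square roots defining $a_1,a_2$.
\begin{itemize}
\item Write $P=a_1(z_+)a_2(z_-)$ and $Q=\mathrm{i}\,b_2(z_-)\mathrm{e}^{-2\mathrm{i}x_0^\ell E^\ell}$, so $D=P-Q$. The jump \eqref{e:jumph}, relation \eqref{jumpS} and $\det S(z_-)=1$ give the exact identity $D(P+Q)=P^2-Q^2=\mathrm{e}^{-4\mathrm{i}x_0^\ell E^\ell}\,b_2(z_-)/b_1(z_-)$.
\item This identity, not \eqref{e:a12jump}, is what shows $D\neq0$ in the interior of the overlap. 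It uses $b_2(z_-)\neq0$, which $h$ needs anyway.
\item Near $z_2^\ell$ the right-hand side stays bounded while $|Q|\asymp|z-z_2^\ell|^{-1/4}$. So either $P\sim-Q$, giving $D\sim-2Q$ and the square-root zero; or $P\sim Q$, giving $D=\mathcal{O}(|z-z_2^\ell|^{1/4})$, and then $r_1$ does not vanish at $z_2^\ell$.
\item To decide, use that $a_1$ has no jump on $\Sigma_1^r\setminus\Sigma_1^\ell$. Hence $\gamma^\ell a_1$ is analytic in a slit neighbourhood of $z_2^\ell$ and, under non-resonance, bounded above and below.
\item Its jump ratio is a priori $\to\pm1$, and it must tend to $1$. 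Otherwise the endpoint behaviour of the Cauchy integral of its logarithm would force a power-law zero or pole. This gives $a_1(z_+)/a_1(z_-)\to-\mathrm{i}$.
\item The rank-one limit $\widehat W^\ell(x;z_2^\ell)$ gives $a(z_-)\sim\mathrm{e}^{-2\mathrm{i}x_0^\ell E^\ell}b_2(z_-)$. Together with the previous point, this yields $P\sim-Q$.
\end{itemize}

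\paragraph{Interior continuity.} A Cauchy transform of a merely continuous density need not have continuous boundary values. Interior continuity of $h_\pm$, and hence of $r_1,r_2$, therefore needs H\"older or Dini regularity of the densities.
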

Regarding $\rho(z)$, using the asymptotics of $b(z)$ \eqref{b-asym} and those of $a_1(z)$ and $a_2(z)$ in Proposition~\ref{propa12}, we deduce the following.
 \begin{lemma}
 Suppose $u(x)-u_0^\ell(x)\in \mathcal{W}^{4,1}(\R^-)$ and $u(x,t)-u_0^r(x)\in \mathcal{W}^{4,1}(\R^+) $.	Then $\rho(z)$ defined in \eqref{e:rho} satisfies $\rho(z)=\mathcal{O}(z^{-4})$ as $z\to\infty$ as follows immediately from Theorem~\ref{theorem1}.
 \end{lemma}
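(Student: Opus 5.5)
The bound is a quotient estimate: I will bound the numerator $b(z)$ using Theorem~\ref{theorem1}, and show that the denominator $a_1(z)a_2^*(z)$ and the exponential prefactor in \eqref{e:rho} contribute only a unimodular factor times something bounded and bounded away from zero for large real $z$.

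First, since $z\in\R$ and $x_0^r$ is real, the factor $\e^{-2\i x_0^r z}$ has modulus one. It therefore suffices to show
\[
\frac{|b(z)|}{|a_1(z)|\,|a_2^*(z)|}=\mathcal{O}(z^{-4}),\qquad z\in\R,\ |z|\to\infty .
\]

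Second, I will control the denominator. By \eqref{e:a12jump}, for $z\in\R$ we have $|a_1(z)|^{-2}=1+|b(z)/a(z)|^2\ge 1$. In particular $|a_1(z)|^{-1}\le (1+|b/a|^2)^{1/2}$, which is bounded once $b/a$ is bounded. Also $|a_2(z)|=1$ on $\R$. Since $a_2^*(z)=\overline{a_2(\bar z)}=\overline{a_2(z)}$ for real $z$ (as a boundary value, the real line being away from the arcs $\Sigma$), this gives $|a_2^*(z)|=1$. For the bound on $b/a$: Theorem~\ref{theorem1} gives $a(z)\e^{-\i(x_0^\ell-x_0^r)z}=1+\mathcal{O}(z^{-1})$, continuously up to $\R$. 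The exponential is unimodular on $\R$, so $|a(z)|\ge 1/2$ for $|z|$ large. Combined with $b=\mathcal{O}(z^{-4})$, this shows $|b/a|$ is bounded, indeed $\mathcal{O}(z^{-4})$. Hence $|a_1(z)|^{-1}=1+\mathcal{O}(z^{-8})$.

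Third, I conclude. By \eqref{b+asympt}, under the $\mathcal{W}^{4,1}$ hypotheses $|b(z)|=|b(z)\e^{-\i(x_0^\ell-x_0^r)z}|=\mathcal{O}(z^{-4})$. Therefore $|\rho(z)|\le |b(z)|\,|a_1(z)|^{-1}=\mathcal{O}(z^{-4})$.

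The main obstacle is entirely contained in \eqref{b+asympt}, proved in the appendix; given that, the argument is a bookkeeping of unimodular factors. The only subtle point is making sure $a_1$, $a_2$ and $h$ have well-defined boundary values on $\R$, so that the identities in \eqref{e:a12jump} can be used pointwise.
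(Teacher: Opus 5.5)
Your proof is correct and is essentially the paper's argument: a quotient estimate in which everything reduces to $|b(z)|=\mathcal{O}(z^{-4})$ on $\R$ from \eqref{b+asympt}, with the denominator $a_1a_2^*$ controlled by Proposition~\ref{propa12}. The only difference is in which part of that proposition you use. The paper uses the large-$z$ asymptotics $a_1(z)=\e^{\i(x_0^\ell-x_0^r)z}(1+\mathcal{O}(z^{-1}))$, $a_2(z)=1+\mathcal{O}(z^{-1})$ to see that $|a_1a_2^*|\to1$ on $\R$. You instead use the exact identities $|a_2|=1$ and $|a_1|^{-2}=1+|b/a|^2$ on $\R$, which gives the slightly sharper $|a_1|^{-1}=1+\mathcal{O}(z^{-8})$.
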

\begin{proposition}
	On the components of $\Sigma_1^\ell$ and $\Sigma_1^r$ which do not overlap, $r_1(z)$ and $r_2(z)$ defined in \eqref{e:r12} simplify to:
	\begin{itemize}
		\item For $z\in\Sigma_1^\ell\setminus\Sigma_1^r$,
		\be
		r_1(z)= \frac{\e^{-2\mathrm{i} x_0^r z-2\mathrm{i} x_0^\ell E^\ell}}{2a_1(z_-)a_1(z_+)}.
		\ee
		\item For $z\in\Sigma_1^r\setminus\Sigma_1^\ell$,
		\be
	r_2(z)=\frac{\e^{2\mathrm{i} x_0^rz+2\mathrm{i} x_0^r E^r}}{2a_2(z_-)a_2(z_+)}.
    \ee
	\end{itemize}
\end{proposition}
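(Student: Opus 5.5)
The plan is to prove both formulas by direct substitution. Two earlier facts do all the work. The first is the jump relations \eqref{a+symmetry} of Proposition~\ref{S}, which express $a(z_+)$ through $b_2(z_-)$ or $b_1(z_-)$ on the non-overlapping arcs. The second is the continuity statements in \eqref{e:a12jump}: $a_2$ has no jump on $\Sigma^\ell\setminus\Sigma^r$ and $a_1$ has no jump on $\Sigma^r\setminus\Sigma^\ell$. Combined with the factorization $a=a_1a_2$ of Proposition~\ref{propa12}, these collapse the two-term denominators in \eqref{e:r12} into a single product.

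\emph{The case $z\in\Sigma_1^\ell\setminus\Sigma_1^r$.}
\begin{enumerate}
\item By \eqref{a+symmetry}, $a(z_+)=-\mathrm{i}\,\e^{-2\mathrm{i} x_0^\ell E^\ell}b_2(z_-)$.
\item Hence the second term of the denominator in \eqref{e:defr1} equals $a(z_+)=a_1(z_+)a_2(z_+)$.
\item Since $a_2(z_+)=a_2(z_-)$ on this arc, the denominator becomes $2a_1(z_+)a_2(z_-)$.
\item The prefactor $a_2(z_-)/a_1(z_-)$ then cancels $a_2(z_-)$, leaving
\[
r_1(z)=\frac{\e^{-2\mathrm{i} x_0^r z-2\mathrm{i} x_0^\ell E^\ell}}{2a_1(z_-)a_1(z_+)}.
\]
\end{enumerate}

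\emph{The case $z\in\Sigma_1^r\setminus\Sigma_1^\ell$.} This is the only delicate step, because $b_2$ is defined only on $\Sigma_1^\ell$, so \eqref{e:defr2} cannot be read literally here. I would first rewrite the denominator using the identity in \eqref{e:a12jump} valid on $\Sigma^\ell\cap\Sigma^r$:
\[
a_1(z_+)a_2(z_-)-\mathrm{i} b_2(z_-)\e^{-2\mathrm{i} x_0^\ell E^\ell}=\frac{a_1(z_+)a_2(z_-)}{a_1(z_-)a_2(z_+)}\Bigl(a_1(z_-)a_2(z_+)-\mathrm{i} b_1(z_-)\e^{2\mathrm{i} x_0^rE^r}\Bigr).
\]
This gives the equivalent form
\[
r_2(z)=\frac{a_1(z_-)}{a_2(z_-)}\,\frac{\e^{2\mathrm{i} x_0^rE^r+2\mathrm{i} x_0^r z}}{a_1(z_-)a_2(z_+)-\mathrm{i} b_1(z_-)\e^{2\mathrm{i} x_0^rE^r}}.
\]
This expression involves only $b_1$, which is defined on all of $\Sigma_1^r$, so I take it as the meaning of \eqref{e:defr2} on $\Sigma_1^r\setminus\Sigma_1^\ell$. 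By construction it agrees with the original definition on the overlap. Justifying this interpretation is the one point that needs care; the rest is routine.

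With this form, the second relation in \eqref{a+symmetry} gives $-\mathrm{i} b_1(z_-)\e^{2\mathrm{i} x_0^rE^r}=a(z_+)=a_1(z_+)a_2(z_+)$. Continuity of $a_1$ on this arc gives $a_1(z_+)=a_1(z_-)$, so the denominator becomes $2a_1(z_-)a_2(z_+)$. Cancelling $a_1(z_-)$ yields
\[
r_2(z)=\frac{\e^{2\mathrm{i} x_0^rz+2\mathrm{i} x_0^r E^r}}{2a_2(z_-)a_2(z_+)}.
\]
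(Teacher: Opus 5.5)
Your proof is correct and, for $r_1$, essentially follows the paper. The paper evaluates $a_1(z_+)a_2(z_-)$ directly from the square-root definitions \eqref{e:defa12}, using \eqref{a+symmetry} for both boundary values of $a$ and the jump \eqref{e:jumph} of $h$, and it selects the branch $\mathrm{e}^{-\mathrm{i}\pi/2}$ by hand. You pass the same facts through $a=a_1a_2$ and the continuity of $a_2$ from Proposition~\ref{propa12}, which fixes that sign automatically.

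The paper writes out only the $r_1$ case. Your $r_2$ argument correctly supplies the omitted case, including the point that \eqref{e:defr2} must be read through the overlap identity in \eqref{e:a12jump}, since $b_2$ is not defined on $\Sigma_1^r\setminus\Sigma_1^\ell$.
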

\begin{proof}
	We only show the equality  for $r_1(z)$.
	As $z\in\Sigma_1^\ell\setminus\Sigma_1^r$, we only need to show that $-\i b_2(z_-)\e^{-2\i x_0^\ell E^\ell}=a_1(z_+)a_2(z_-)$.
	Using the definition of $a_1(z)$ and $a_2(z)$ in \eqref{e:defa12}, we have that
	\begin{align*}
		a_1(z_+) a_2(z_-)&= (a(z_+)h(z_+))^{1/2} (a(z_-)/h(z_-))^{1/2}=\left(\e^{-2\i x_0^\ell E^\ell} b_2(z_-) \e^{-2\i x_0^\ell E^\ell} b_2(z_+) \right)^{1/2} (- b_2(z_-)/b_2(z_+))^{1/2}\\
		&=\e^{-\i\pi/2}\e^{-2\i x_0^\ell E^\ell} b_2(z_-).
	\end{align*}
	where we used \eqref{a+symmetry} and \eqref{e:defh} in the second equality.
\end{proof}
To set up a more general Riemann–Hilbert problem, we symmetrize the factorization by defining
\be
\Phi(z;x)=M(z;x)
\begin{cases}
    a_2(z)^{\sigma_3}, & z\in \Complex^+,\\
    a_2^*(z)^{-\sigma_3}, & z\in \Complex^-.
\end{cases}
\ee
Then, $\Phi(z;x)$ satisfies the following Riemann--Hilbert problem:
\begin{RHP}\label{RHP:Mtilde}
    Find a $2\times2$ matrix-valued function $\Phi(z;x)$ which satisfies the following conditions:
    \begin{enumerate}
        \item $\Phi(z;x)$ is analytic for $z \in \Complex\setminus (\Real\cup\Sigma^r\cup\Sigma^\ell)$.
        \item $\Phi(z;x)=I+\mathcal{O}(z^{-1})$, as $z\to\infty$\,.
        \item $\Phi(z;x)$ satisfies the jump condition $\Phi(z_+;x)=\Phi(z_-;x) V(z;x,t=0)$, where $ V(z;x, t)$ is given by

  \be\label{e:V0}
            V(z;x,t)=\begin{cases}
\begin{pmatrix}
         		\frac{1-r_1(z)r_2(z)}{1+r_1(z)r_2(z)} & \frac{2\mathrm{i} r_2(z)}{1+r_1(z)r_2(z)}\e^{-2\theta(z;x)} \vspace{0.5ex}
                \\
         	\frac{2\mathrm{i} r_1(z)}{1+r_1(z)r_2(z)}\e^{2\theta(z;x,t)} & 	\frac{1-r_1(z)r_2(z)}{1+r_1(z)r_2(z)} 
         	\end{pmatrix}, & z\in\Sigma^r_1\cap\Sigma^\ell_1,
\vspace{6pt}
\\
\begin{pmatrix}
1 & 0 \vspace{0.5ex}\\
2\mathrm{i} r_1(z)\e^{2\theta(z;x,t)} & 1
\end{pmatrix}, & z\in\Sigma^\ell_1\setminus\Sigma^r_1,
\vspace{6pt}
\\
\begin{pmatrix}
1 &  2\mathrm{i} r_2(z)\e^{-2\theta(z;x,t)} \vspace{0.5ex}\\
0 & 1
\end{pmatrix}, & z\in\Sigma^r_1\setminus\Sigma^\ell_1,
\vspace{6pt}
\\
\begin{pmatrix}
  1+|\rho(z)|^2 & \rho^*(z)\e^{-2\theta(z;x,t)} \vspace{0.5ex}\\
    \rho(z)\e^{2\theta(z;x,t)} & 1
\end{pmatrix}, & z\in \Real, 
\vspace{8pt}
\\
\begin{pmatrix}
            \frac{1-r_1^*(z)r_2^*(z)}{1+r_1^*(z)r_2^*(z)} & 
            \frac{2\mathrm{i} r_1^*(z)}{1+r_1^*(z)r_2^*(z)}\e^{-2\theta(z;x,t)}\vspace{0.5ex}
                \\
            \frac{2\mathrm{i} r_2^*(z)}{1+r_1^*(z)r_2^*(z)}\e^{2\theta(z;x,t)} &
            \frac{1-r_1^*(z)r_2^*(z)}{1+r_1^*(z)r_2^*(z)} 
            \end{pmatrix}, & z\in \Sigma_2^r\cap\Sigma_2^\ell,
\vspace{6pt}
\\
\begin{pmatrix}
 1 & 2\mathrm{i} r_1^*(z)\e^{-2\theta(z;x,t)} \vspace{0.5ex}\\
 0 & 1
 \end{pmatrix}, & z\in \Sigma^\ell_2\setminus\Sigma_2^r,
 \vspace{6pt}
 \\
 \begin{pmatrix}
     1
     &0 \vspace{0.5ex}\\
     2\mathrm{i} r_2^*(z)\e^{2\theta(z;x,t)} & 1
 \end{pmatrix}, & z\in \Sigma_2^r\setminus\Sigma_2^\ell\,,
\end{cases}
        \ee
        where 
\begin{equation}
    \theta(z;x,t) = \mathrm{i}t z^2 + \mathrm{i}x z\,.
\end{equation}
\end{enumerate}
\end{RHP}

We omit the proof of the form of  jump  matrix $V(z;x,t=0)$  in \eqref{e:V0} as it follows directly from  the jump matrix  \eqref{jumpM}  for $M$ and the relations  \eqref{e:a12jump} and \eqref{e:r12}.

\begin{corollary}
	The potential $u(x, t=0)$  is obtained from the solution of any solution $\Phi(z;x)$ to the Riemann--Hilbert problem \ref{RHP:Mtilde} by
	\be
u(x,t=0)=2\mathrm{i}\lim_{z\to\infty} z\Phi_{12}(z;x).
	\ee
\end{corollary}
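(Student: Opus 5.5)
The plan is to track the large-$z$ expansion of $\Phi$ back through the chain of transformations $\Phi\mapsto M\mapsto (W^\ell,W^r)$ and to read off the $z^{-1}$ coefficient from the asymptotics of the Jost columns in Proposition~\ref{propm}, item~\ref{asymptotics}.

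The argument has three steps.

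\textbf{Step 1 (removing $a_2$).} For $z\in\C^+$ we have $\Phi(z;x)=M(z;x)\,a_2(z)^{\sigma_3}$. Right multiplication by a diagonal matrix rescales columns, so
\[
\Phi_{12}(z;x)=M_{12}(z;x)\,a_2(z)^{-1}.
\]
By Proposition~\ref{propa12}, $a_2(z)=1+\mathcal{O}(z^{-1})$. Hence
\[
z\Phi_{12}=zM_{12}\bigl(1+\mathcal{O}(z^{-1})\bigr),
\]
and, provided $M_{12}=\mathcal{O}(z^{-1})$, the two limits $\lim z\Phi_{12}$ and $\lim zM_{12}$ coincide. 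The same holds in $\C^-$, where the factor $a_2^*(z)^{-\sigma_3}$ has the analogous expansion $1+\mathcal{O}(z^{-1})$.

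\textbf{Step 2 (reading off $M_{12}$).} In $\C^+$, definition \eqref{WB} gives
\[
M_{12}(z;x)=\bigl(W_2^r(x;z)\bigr)_1\,\e^{-\i(x-x_0^r)z}.
\]
By \eqref{e:asymptoticsW},
\[
W_2^r(x;z)\,\e^{-\i(x-x_0^r)z}=\begin{pmatrix}0\\1\end{pmatrix}+\frac{1}{2\i z}\begin{pmatrix}u(x)\\-A^r(x)\end{pmatrix}+\mathcal{O}(z^{-2}).
\]
So $M_{12}=u(x)/(2\i z)+\mathcal{O}(z^{-2})$, and therefore $2\i\lim_{z\to\infty} zM_{12}=u(x)$.

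As a consistency check in $\C^-$: there $M_{12}=(W_2^\ell)_1\e^{-\i(x-x_0^r)z}/a^*(z)$. Using $a^*(z)\sim\e^{\i(x_0^\ell-x_0^r)z}$ from the conjugate of \eqref{a+asympt}, the exponentials combine into $(W_2^\ell)_1\e^{-\i(x-x_0^\ell)z}$. By \eqref{e:asymptoticsW} this again equals $u(x)/(2\i z)+\mathcal{O}(z^{-2})$. The limit is therefore the same from both half-planes, as it must be for the limit in the statement to be well defined.

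\textbf{Step 3 (``any solution'').} The corollary is stated for any solution of RHP~\ref{RHP:Mtilde}, not only the constructed one. I would prove uniqueness by the standard Liouville argument.

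Suppose $\Phi$ and $\tilde\Phi$ are two solutions. Each jump matrix in \eqref{e:V0} has determinant one:
\begin{itemize}
\item on the overlap, $\bigl[(1-r_1r_2)^2+4r_1r_2\bigr]/(1+r_1r_2)^2=1$;
\item the triangular jumps have determinant one trivially;
\item on $\R$, $(1+|\rho|^2)-|\rho|^2=1$.
\end{itemize}
Hence $\det\Phi$ has no jumps. Its singularities at the branch points are at most quartic-root type, hence removable. Since $\det\Phi\to1$ at infinity, Liouville gives $\det\Phi\equiv1$.

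Then $\tilde\Phi\Phi^{-1}$ has no jumps. Its endpoint singularities are $\mathcal{O}(|z-z_j|^{-1/2})$, hence removable, and it tends to $I$ at infinity. Liouville again gives $\tilde\Phi=\Phi$.

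This step is the main obstacle. It requires the endpoint behaviour of $\Phi$ to be at most quartic-root, which is implicit in RHP~\ref{RHP:M} but not restated in RHP~\ref{RHP:Mtilde}. I would add it as a condition inherited from $M$ via Proposition~\ref{propa12}, item~4. If it is not included, then ``any solution'' must be read as any solution with this endpoint behaviour.
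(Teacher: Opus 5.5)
Your Steps 1--2 are exactly the argument the paper leaves implicit (the corollary is stated without proof right after $\Phi=Ma_2^{\sigma_3}$), with one slip in the $\C^-$ check: conjugating \eqref{a+asympt} gives $a^*(z)\sim\e^{-\i(x_0^\ell-x_0^r)z}$, and only with this sign do the exponentials combine into $\e^{-\i(x-x_0^\ell)z}$. Step 3 goes beyond the paper, and your caveat is justified: without an endpoint condition, $(I+N(z-z_1^\ell)^{-1})\Phi$ is another solution with a different $z^{-1}$ coefficient. However, the endpoint bound for the constructed $\Phi$ should be obtained column by column, $\Phi_{\cdot1}=W_1^\ell\e^{\i(x-x_0^r)z}/a_1$ and $\Phi_{\cdot2}=W_2^r\e^{-\i(x-x_0^r)z}/a_2$, where the quartic-root growth of $a_1$ at $z_j^\ell$ and of $a_2$ at $z_j^r$ cancels that of the Jost columns; multiplying the separate bounds on $M$ and $a_2^{\pm1}$ gives at best $|z-z_j|^{-1/2}$, which is not enough for the Liouville step.
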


%%%%%%%%%%%%%%%%%%%%%%%%%%%%%%%%%%%%%%%%%%%%%%%%%%%%%%%%%%%%%%%%%%%%%%%%%%%%%%%%%%%%%%
\section{Time evolution}\label{inverse}
So far we have considered only the  potential at time $t=0$, namely  $u(x,t=0)$. A key feature of the inverse scattering transform is that if $u(x,t)$ evolves according to \eqref{e:nls}, then the time evolution of the scattering data is linear and trivial. 
Time evolution of the inverse scattering problem simply amounts to allowing $t\neq 0$ in $V(z;x,t)$, which gives us the time-evolved inverse scattering problem

\begin{RHP}%\label{RHP:Phi}
	Find a $2\times2$ matrix-valued function $\Phi(z;x,t)$ which satisfies the following conditions:
	\begin{enumerate}
		\item $\Phi(z;x,t)$ is analytic in $\Complex\setminus (\Real\cup\Sigma^r\cup\Sigma^\ell)$.
		\item $\Phi(z;x,t)=I+\mathcal{O}(z^{-1})$, as $z\to\infty$\,.
		\item $\Phi(z;x,t)$ satisfies the jump condition $\Phi(z_+;x,t)=\Phi(z_-;x,t)V(z;x,t)$,   for $z\in \Sigma^r\cup\Sigma^\ell$, where $V(z;x,t)$ is given in \eqref{e:V0}.
	\end{enumerate}
\end{RHP}

\begin{remark}
The  jump matrix  $V(z;x,t)$  coincides with the  jump matrix of the full  soliton gas Riemann--Hilbert problem for the NLS equation (work in preparation  \cite{GJZZ}    that follows from the work in preparation for  the  modified Korteweg-de Vries equation 
\cite{GJM}).
When $r_1(z)$ or $r_2(z)$ is equal to zero, the jump matrix reduces to the one considered in \cite{BGO}  for the NLS, and in \cite{GGJM} for the Korteweg-de Vries equation.
However, there is a substantial difference between the soliton gas Riemann--Hilbert problem  and the Riemann--Hilbert problem obtained for 
a step-like oscillatory potential: in the latter case $r_1(z)$ has square root zeros at the endpoints of~    $\Sigma^\ell$  and $r_2(z)$ has square root zeros  at the endpoints of~~$\Sigma^r$, while for the soliton gas problem,
this condition is not required. This fact translates in decaying properties of the soliton gas solution 
that are much weaker then the step-like solution. We can conclude  that the soliton gas problem is more general.

\end{remark}

Now we are ready to prove Theorem~\ref{theorem2} which we repeat here for the reader's convenience.
\InverseProblem*
% \begin{theorem}
% Given functions $\rho \in L^{2}(\Real)$, $r_1(z)\in L^2(\Sigma^\ell)$, and $r_2(z)\in L^2(\Sigma^r)$, the Riemann--Hilbert problem \ref{RHP:Phi} is uniquely solvable for all $(x,t)\in\Real\times\Real^+$. Moreover,  if  $\rho\in L^{2,2}(\Real)\cap L^{1,2}(\Real) $   then  the function $u(x,t)$ defined as
% \be
% u(x,t)=2\mathrm{i}\lim_{z\to\infty} z\Phi_{12}(z;x,t),
% \ee
% is in $C^2(\R)\times C^1(\R^+)$ and 
% solves the NLS equation \eqref{e:nls}.
% \end{theorem}
\begin{proof}
The proof is established basically on \cite{Zhou89}.  Indeed writing the Riemann--Hilbert problem for $\Phi$ in the form
$\Phi(z_+)-\Phi(z_-)=\Phi(z_-)(V(z)-I)$, one can use the Sokhotski-Plemelj formula to the contour $\hat{\Sigma}=\R\cup\Sigma^\ell\cup\Sigma^r$  to obtain 
\[
\Phi(z)=I+\dfrac{1}{2\pi \i }\int_{\hat{\Sigma}}\dfrac{\Phi(\xi_-)(V(\xi)-I)d\xi}{\xi-z}\,,
\]
and in particular 
\[
\Phi(z_-)=I+\dfrac{1}{2\pi \i }\int_{\hat{\Sigma}}\dfrac{\Phi(\xi_-)(V(\xi)-I)d\xi}{(\xi-z)_-}\,,
\]
where the subscript $"-"$ stands for the limiting value of the integral on the negative  side of the oriented contour $\hat{\Sigma}$.
Defining $\mu(\xi):=\Phi(\xi_-)$  it is clear that the solvability of the Riemann--Hilbert problem \ref{RHP:Phi}
 is equivalent to the solvability of the following  integral equation for the matrix function $\mu$:
\bse
\be
(I-\mathcal{C}_{V-I})\mu=I,\label{e:inhomo}
\ee
where  $\mathcal{C}_{V-I}$ is the  operator acting from  $ L^2(\hat{\Sigma}, \C^{2\times2})$ to itself defined as
\be
(\mathcal{C}_{V-I} f)(z)=\frac{1}{2\pi\i}\int_{\hat{\Sigma}} \frac{f(s)(V(s)-I)}{(s-z)_-}\d s, \quad f\in L^2(\hat{\Sigma}, \C^{2\times2}).
\ee
\ese
The operator $\mathcal{C}_{V-I}$  is a bounded linear operator from $ L^2(\hat{\Sigma}, \C^{2\times2})$ to itself.
 The solvability of the inhomogeneous problem \eqref{e:inhomo}  follows from the proof  that the homogeneous problem $(I-\mathcal{C}_{V-I})\nu=0$ has only the trivial solution. Suppose that there exists $\nu\in L^2(\hat{\Sigma}, \C^{2\times 2})$ such that $(I-\mathcal{C}_{V-I})\nu=0$. Then the function defined as
\be
\Phi_0(z;x,t)=\frac{1}{2\pi\i}\int_{\hat{\Sigma}}\frac{\nu(s)(V(s;x,t)-I)}{s-z}\d s\,,
\ee
solves the following Riemann--Hilbert problem:
	\begin{enumerate}
	\item $\Phi_0(z;x,t)$ is analytic in $\Complex\setminus \hat{\Sigma}$.
	\item $\Phi_0(z;x,t)=\mathcal{O}(z^{-1})$, as $z\to\infty$\,.
	\item  $\Phi_0(z_+)=\Phi_0(z_-)V(z)$ for $z\in\hat{\Sigma}$.
	\end{enumerate}
 Let $H(z)=\Phi_0(z)\Phi_0^*(z)^T$, where the superscript $T$ denotes the transpose. Then  clearly $H(z)=\mathcal{O}(z^{-2})$  as $z\to\infty$ and $H(z)$ satisfies the following symmetry:
\be\label{e:symmetryH}
H(z)=\sigma_2 H^*(z)\sigma_2.
\ee
 According to Cauchy-Goursat theorem, 	we have 
\be
\int_{\Real} H_+(z)\d z +\int_{\Sigma_1} (H_+(z)-H_-(z)) \d z=0.
\ee
 We first look at the second integral. Since
 \begin{align*}
 \int_{\Sigma_1}(H_+(z)-H_-(z)) \d z&=\int_{\Sigma_1}(\Phi_{0+}(z)\Phi^*_{0+}(z)^T-\Phi_{0-}(z)\Phi_{0-}^*(z)^T )\d z \\
 &=\int_{\Sigma_1}\Phi_{0-}(z)\left(V(z)-(V^*(z)^T)^{-1}\right)\Phi_{0+}^*(z)^T \d z=0,
\end{align*}
because  $V(z)-(V^*(z)^T)^{-1}=0$. Next we  show that $\Phi_0\equiv 0$ iff $\int_{\Real} H_+(z)\d z=0$.
From the definition of $H(z)$, we first have
\be\label{e:hp1}
0=\int_{\Real}H_+(z)\d z=\int_{\Real} \Phi_{0-}(z)V(z)\overline{\Phi_{0-}(z)}^T \d z.
\ee
Using the symmetry of $H(z)$ \eqref{e:symmetryH} and the symmetry of $\Phi_0(z)$, we further have
\be\label{e:hp2}
0=\sigma_2\int_\Real \overline{\Phi_{0-}(z)V(z)\overline{\Phi_{0-}(z)}^T} \d z \sigma_2 =\int_\Real \Phi_{0-}(z)\sigma_2 \overline{V}(z)\sigma_2 \overline{\Phi_{0-}(z)}^T \d z.
\ee
\eqref{e:hp1} and \eqref{e:hp2} together give
\be
\frac{1}{2}\int_\Real \Phi_{0-}(z)\left(V(z)+\sigma_2 \overline{V}(z)\sigma_2 \right) \overline{\Phi_{0-}(z)}^T \d z=0.
\ee
Since $V(z)+\sigma_2 \overline{V}(z)\sigma_2 $ is positive definite, it follows that $\Phi_{0-}(z)$ is identically zero. Therefore, Zhou's vanishing lemma directly gives the solvability of the Riemann--Hilbert problem \ref{RHP:Phi}. Uniqueness of the solution follows from a standard Liouville type argument. Consequently, we denote by $\mu$ the unique solution of the inhomogeneous singular integral equation \eqref{e:inhomo}. And the solution $\Phi$ of the Riemann--Hilbert problem is recovered by
\be
\Phi(z;x,t)=I+\frac{1}{2\pi\i}\int_{\hat{\Sigma}}\frac{\mu(s)(V(s;x,t)-I)}{s-z}\d s.
\ee

Finally, it is a standard Lax pair argument (see \cite{steplikemkdv}) to show that the function 
\be
\~\Phi(z;x,t)=\Phi(z;x,t)\e^{-\i(xz+tz^2)\sigma_3},
\ee
satisfies the Lax pair \eqref{e:NLSLP}, with the function $u(x,t)$ given by
\begin{align}
\begin{split}
u(x,t)&=2\i\lim_{z\to\infty} z\Phi_{12}(z;x,t)=-\frac{1}{\pi}\int_{\hat\Sigma}\Big(\mu(V-I)\Big)_{12}\d s =-\frac{1}{\pi} \int_{\hat{\Sigma}} \mu_{11}(x,t;s) \rho^*(s) \e^{-2\theta(s;x,t)}\d s\\
&=-\frac{1}{\pi}\int_{\Real}\mu_{11}(x,t;s) \rho^*(s) \e^{-2\theta(s;x,t)}\d s-\frac{1}{\pi}\int_{(\Sigma_1^\ell\setminus\Sigma_1^r)\cup(\Sigma_1^\ell\cap\Sigma_1^r)}\Big(\mu(V-I)\Big)_{12}(s)\d s:= u_\R(x,t)+u_\Sigma (x,t)\,.
\end{split}
\end{align}
Differentiating \eqref{e:inhomo} with respect to $x$ and using the bounded inverse $(I-\mathcal{C}_{V-I})^{-1}$, we obtain $\partial_x\mu$, $\partial_x^2\mu\in L^2{\hat{\Sigma}}$.
Here, to verify the $C^2$ regularity in $x$, it suffices to focus on the integral over $\R$, since the second integral is supported on a finite set and is uniformly bounded together with its first two $x$-derivatives.
Taking the second derivative of \eqref{e:recons} with respect to $x$, we have 
\be\label{e:2derivative}
\partial_x^2 u_\R(x,t)= -\frac{1}{\pi}\int_{\Real}s^2\mu_{11}(s;x,t) \rho^*(s) \e^{-2\theta(s;x,t)}\d s + 2\int_{\R} s \rho^*(s) \e^{-2\theta(s;x,t)}\partial_x\mu_{11} \d s +\int_{\R} \rho^*(s) \e^{-2\theta(s;x,t)}\partial_x^2 \mu_{11}  \d s.
\ee
Since $|\e^{-2\theta(s;x,t)}|=1$ for $s\in\Real$, each item in \eqref{e:2derivative} is controlled by the integrability of $s^2\rho$ and the $L^2$-bound of $\partial_x^k\mu_{11}(s;x,t)$ ($k=1,2$). In particular, the assumption $\rho\in L^{1,2}(\Real)$ ensures $s^2\rho \in L^{1}(\Real)$. So the integrals in \eqref{e:2derivative} are absolutely convergent. Therefore, $u(\cdot,t)\in C^2(\R)$.
Similarly, differentiating once in $t$ gives $u(x,\cdot)\in C^1(\R^+)$ under the same hypotheses.
Finally, since  $\rho(z)\in L^{2,2}(\Real)\cap L^{1,2}(\Real)$, the solution $u(x,t)\in C^2(\R)\times C^1(\R^+)$.
\end{proof}

%%%%%%%%%%%%%%%%%%%%%%%%%%%%%%%%%%%%%%%%%%%%%%%%%%%%%%%%%%%%%%%%%%%%%%%%%%%%%%%%%%%%%%
\appendix
\section*{Appendix}
\setcounter{section}1
\setcounter{subsection}0
\setcounter{equation}0
\setcounter{figure}0
\def\thesection{\Alph{section}}
\def\theequation{\Alph{section}.\arabic{equation}}
\def\thetheorem{\Alph{section}.\arabic{theorem}}
\def\thefigure{\Alph{section}.\arabic{figure}}
\addcontentsline{toc}{section}{Appendix}

\subsection{Proof of Proposition~\ref{proposition_elliptic}}\label{a:u}
\begin{proof}

		 We start by proving \eqref{e:|u|^2}.
		   We recall that $\Omega_1$, $\Omega_2$, $\Omega_0$,   defined in \eqref{Omega12}, \eqref{Omega} and $E$ and $N$  defined in \eqref{e:defEN} are real constants and $\theta_4(z)$ and $\theta_3(z)$ are   even functions.
		   It follows that  $\theta_3(\frac{\Omega}{2\pi})$ is real so that 
		   \[
		   |u(x,t)|^2=(\eta_2-\eta_1)^2 \frac{\theta_3(0)^2  |\theta_3(2A(\infty)+\frac{\Omega}{2\pi})|^2}{\theta_3(\frac{\Omega}{2\pi})^2 |\theta_3(2A(\infty))|^2}.
		   \]
		   To further simplify the above expression when  $z_1$ and $z_2$ are arbitrary points on the upper-half complex plane,  we have
		\be
		2A(\infty)=2\int_{z_2}^\infty \omega=\frac{1}{2}+\i v,\quad v\in\R, 
		\ee
		where $v=2\Im A(\infty)$.  The above relation follows from the fact that  $2A(\infty)$  can be obtained   by decomposing 
		 the path of integration into three paths: from  $z_2$ to $z_1$ where the integral is $\tau$, then from  $z_1$ to $0$ where the integral is equal to $1/2$ and then 
		 from $0$ to $\infty$  where the integral is complex.  
		   		   
		  We then use the  identities $|\theta_3(u+\i v)|^2=\theta_3(u+\i  v)\theta_3(u-\i v)$  for $u, v$ real  and 
		$\theta_3(u+v)\theta_3(u-v)\theta_4^2(0)=\theta_4^2(u)\theta_3^2(v)-\theta_1^2(u)\theta_2^2(v)$,  for any $u,v$ complex numbers, so that we can 
		expand $|u(x,t)|^2 $ in the form 
		\begin{align}\label{e:u2}
			\begin{split}
				|u(x,t)|^2&				=(\eta_2-\eta_1)^2\frac{\theta_3^2(0) \theta_3(\frac{\Omega}{2\pi}+\frac{1}{2}+\i v) \theta_3(\frac{\Omega}{2\pi}+\frac{1}{2}-\i v)}{\theta_3^2(\frac{\Omega}{2\pi}) \theta_3(\frac{1}{2}+\i v) \theta_3(\frac{1}{2}-\i v)}\\
				&=(\eta_2-\eta_1)^2\frac{\theta_3^2(0)}{\theta_4^2(0)}\frac{\theta_4^2(\frac{\Omega}{2\pi}+\frac{1}{2})\theta_3^2(\i v)-\theta_1^2(\frac{\Omega}{2\pi}+\frac{1}{2})
					\theta_2^2(\i v)}{\theta_3^2(\frac{\Omega}{2\pi}) \theta_4^2(\i v)}\\
				&=(\eta_2-\eta_1)^2\frac{\theta_3^2(0)}{\theta_4^2(0)}\left[\frac{\theta_3^2(\i v)}{\theta_4^2(\i v)} -\frac{\theta_1^2(\frac{\Omega}{2\pi}+\frac{1}{2})
					\theta_2^2(\i v)}{\theta_4^2(\frac{\Omega}{2\pi}+\frac{1}{2}) \theta_4^2(\i v)}\right]\\
				&=(\eta_2-\eta_1)^2\frac{\theta_3^2(0)}{\theta_4^2(0)}\left[\frac{\theta_3^2(\i v)}{\theta_4^2(\i v)} -\frac{\theta_2^2(0)}{\theta_3^2(0)}\frac{
					\theta_2^2(\i v)}{\theta_4^2(\i v)}\mbox{sn}^2\left(\frac{\Omega K(m)}{\pi}+K(m)\right)\right]\,,
			\end{split}
		\end{align}
		where we have used the identity  $\mbox{sn}(2K(m)z;m)=\dfrac{\theta_3(0)}{\theta_2(0)}\dfrac{\theta_1(z)}{\theta_4(z)}$.		It remains to compute the ratios $\frac{\theta_2^2(\i v)}{\theta_4^2(\i v)}$ and $\frac{\theta_3^2(\i v)}{\theta_4^4(\i v)}$ in \eqref{e:u2}. We first notice that 
		\be
		\frac{\theta_2^2(\i v)}{\theta_4^2(\i v)}=\frac{\theta_2^2(2y-\frac{1}{2})}{\t_4^2(2y-\frac{1}{2})}=\frac{\t_1^2(2y)}{\t_3^2(2y)}, \qquad \qquad y=A(\infty).
		\ee
		Subsequently, using the formulae
		\bse
		\begin{gather}
			\t_1(2y)\t_2(0)\t_3(0)\t_4(0)=2\t_1(y)\t_2(y)\t_3(y)\t_4(y),\\
			\t_3(2y)\t_3(0)\t_4^2(0)=\t_3^2(y)\t_4^2(y)-\t_1^2(y)\t_2^2(y), \label{e:t32y}
		\end{gather}
		\ese
		we have
		\be\label{e:t132y}
		\frac{\t_1^2(2y)}{\t_3^2(2y)}=\frac{4\t_4^2(0)}{\t_2^2(0)}\frac{\t_3^2(y)}{\t_1^2(y)}\frac{\t_4^2(y)}{\t_2^2(y)}\left(\frac{\t_3^2(y)}{\t_1^2(y)}\frac{\t_4^2(y)}{\t_2^2(y)}-1\right)^{-2}.
		\ee
		The calculation of the ratios $\frac{\t_3^2(y)}{\t_1^2(y)}$ and $\frac{\t_4^2(y)}{\t_2^2(y)}$ are more involved. To do so,
		we first consider the function 
		\be\label{e:f421}
		f_{42}(z)=\frac{\t_2^2(0)}{\t_4^2(0)}\frac{\t_4^2(\int_{z_2}^z \omega)}{\t_2^2(\int_{z_2}^z \omega)},
		\ee
		that is a single valued function on the Riemann surface \eqref{RS_X} and satisfies 
		\be
		 \qquad f_{42}(z_2)=1.
		\ee 
		Therefore, $f_{42}(z)$ is a rational function with a double pole at $z=\overline{z_2}$ and a double zero at $z=z_1$. So $f_{42}(z)$ can be rewritten as 
		\be\label{e:f422}
		f_{42}(z)=\frac{z-z_1}{z-\overline{z_2}}\frac{z_2-\overline{z_2}}{z_2-z_1}.
		\ee
		As $z\to\infty$, comparing \eqref{e:f421} with \eqref{e:f422} directly shows
		\be\label{e:t42}
		\frac{\t_4^2(y)}{\t_2^2(y)}=\frac{\t_4^2(0)}{\t_2^2(0)}\frac{z_2-\overline{z_2}}{z_2-z_1}.
		\ee
		Similarly, we define 
		\be
		f_{31}(z)=\frac{\t_2^2(0)}{\t_4^2(0)}\frac{\t_3^2(\int_{z_2}^z \omega)}{\t_1^2(\int_{z_2}^z \omega)},
		\ee
		which is also a rational function normalized to $1$  at $z=\overline{z_2}$.
		Then we have 
		\be	f_{31}(z)=\frac{z-\overline{z_1}}{z-z_2}\frac{\overline{z_2}-z_2}{\overline{z_2}-\overline{z_1}}.
		\ee
		Again let $z\to\infty$, we get
		\be\label{e:t31}
		\frac{\t_3^2(y)}{\t_1^2(y)}=\frac{\t_4^2(0)}{\t_2^2(0)}\frac{\overline{z_2}-z_2}{\overline{z_2}-\overline{z_1}}.
		\ee
		Plugging \eqref{e:t42} and \eqref{e:t31} into \eqref{e:t132y}  and using the identities
		\[
		\dfrac{\t_2^4(0)}{\theta_3^4(0)}=m,\;\;\;\dfrac{\t_4^4(0)}{\theta_2^4(0)}=\dfrac{1-m}{m},\quad m=1-\left|\frac{z_1-z_2}{z_1-\overline{z_2}}\right|^2\,,
		\]
		 then gives 
		\begin{align}\label{e:expr2}
			\begin{split}
				\frac{\t_2^2(0)}{\t_4^2(0)}\frac{\theta_2^2(\i v)}{\theta_4^2(\i v)}&=\frac{\t_2^2(0)}{\t_4^2(0)}\frac{\t_1^2(2y)}{\t_3^2(2y)}=\frac{4\t_4^4(0)}{\t_2^4(0)}\left|\frac{z_2-\overline{z_2}}{z_2-z_1}\right|^2 \left(\frac{\t_4^4(0)}{\t_2^4(0)}\left|\frac{z_2-\overline{z_2}}{z_2-z_1}\right|^2-1\right)^{-2}\\
				&=4\frac{1-m}{m}\left|\frac{z_2-\overline{z_2}}{z_2-z_1}\right|^2\left(\frac{1-m}{m}\left|\frac{z_2-\overline{z_2}}{z_2-z_1}\right|^2-1\right)^{-2}
				=\frac{4\eta_1\eta_2}{(\eta_2-\eta_1)^2}\,.
			%	=\frac{2\eta_1\eta_2}{(\eta_1^2-\eta_2)^2}\sqrt{\frac{(\eta_1-\eta_2)^2+(\xi_1-\xi_2)^2}{\eta_1\eta_2}}.
			\end{split}
		\end{align}
		Moreover, for  $\frac{\theta_3^2(\i v)}{\theta_4^2(\i v)}$, we have 
		\be
		\frac{\theta_3^2(\i v)}{\theta_4^2(\i v)}=\frac{\theta_3^2(2y-\frac{1}{2})}{\theta_4^2(2y-\frac{1}{2})}=\frac{\t_4^2(2y)}{\t_3^2(2y)}.
		\ee
		Here we use \eqref{e:t32y} and 
		\be
		\t_4(2y)\t_4^3(0)=\t_3^4(y)-\t_2^4(y).
		\ee
		Then 
		\be\label{e:t432y}
		\frac{\t_4^2(2y)}{\t_3^2(2y)}=\frac{\t_3^2(0)}{\t_4^2(0)}\left(\frac{\frac{\t_3^2(y)}{\t_2^2(y)}-\frac{\t_2^2(y)}{\t_3^2(y)}}{\frac{\t_4^2(y)}{\t_2^2(y)}-\frac{\t_1^2(y)}{\t_3^2(y)}}
		\right)^2\,.
		\ee
		Similarly, we define rational functions $f_{32}(z)$ and  $f_{12}(z)$
		\bse
		\begin{gather}
			f_{32}(z)=\frac{\t_2^2(0)}{\t_3^2(0)}\frac{\t_3^2(\int_{z_2}^z \omega)}{\t_2^2(\int_{z_2}^z \omega)}=\frac{z-\overline{z_1}}{z-\overline{z_2}}\frac{z_2-\overline{z_2}}{z_2-\overline{z_1}},\\
			f_{13}(z)=\frac{\t_4^2(0)}{\t_2^2(0)}\frac{\t_1^2(\int_{z_2}^z \omega)}{\t_3^2(\int_{z_2}^z \omega)}=\frac{z-z_2}{z-\overline{z_1}}\frac{\overline{z_2}-\overline{z_1}}{\overline{z_2}-z_2},
		\end{gather}
		\ese
		and let $z\to\infty$ to obtain
		\be\label{e:t32t12}
		\frac{\t_3^2(y)}{\t_2^2(y)}=\frac{\t_3^2(0)}{\t_2^2(0)} \frac{z_2-\overline{z_2}}{z_2-\overline{z_1}},\qquad \frac{\t_1^2(y)}{\t_3^2(y)}=\frac{\t_2^2(0)}{\t_4^2(0)}\frac{\overline{z_2}-\overline{z_1}}{\overline{z_2}-z_2}.
		\ee
		Plugging \eqref{e:t42} and \eqref{e:t32t12} into \eqref{e:t432y}, we obtain
		\begin{align}\label{e:expr1}
			\begin{split}
				\frac{\t_3^2(0)}{\t_4^2(0)}\frac{\t_4^2(2y)}{\t_3^2(2y)}&=\frac{\t_3^4(0)}{\t_4^4(0)}\left(\frac{\frac{\t_3^2(0)}{\t_2^2(0)} \frac{z_2-\overline{z_2}}{z_2-\overline{z_1}}-\frac{\t_2^2(0)}{\t_3^2(0)} \frac{z_2-\overline{z_1}}{z_2-\overline{z_2}}}{\frac{\t_4^2(0)}{\t_2^2(0)}\frac{z_2-\overline{z_2}}{z_2-z_1}-\frac{\t_2^2(0)}{\t_4^2(0)}\frac{\overline{z_2}-\overline{z_1}}{\overline{z_2}-z_2}}
				\right)^2
				=\frac{1}{1-m}\left(
				\frac{\frac{1}{\sqrt{m}} \frac{z_2-\overline{z_2}}{z_2-\overline{z_1}}
					-
					\sqrt{m} \frac{z_2-\overline{z_1}}{z_2-\overline{z_2}}}
				{\frac{\sqrt{1-m}}{\sqrt{m}}\frac{z_2-\overline{z_2}}{z_2-z_1}
					-\frac{\sqrt{m}}{\sqrt{1-m}}\frac{\overline{z_2}-\overline{z_1}}{\overline{z_2}-z_2}}
				\right)^2=\frac{(\eta_1+\eta_2)^2}{(\eta_1-\eta_2)^2}\,.
			\end{split}
		\end{align}
		Plugging \eqref{e:expr1} and \eqref{e:expr2} into \eqref{e:u2}  and the explicit expression of $\Omega$ in \eqref{Omega}  then leads to \eqref{e:|u|^2}.
		The relation \eqref{period_u^2} can be obtained using the identity
		\[
		\int_0^{2K(m)}\mbox{sn}^2(s,m)\d s=\frac{2}{m}(K(m)-E(m)).
		\]
		In the case $\Re(z_1)=\Re(z_2)$ we have that $2A(\infty)=\frac{1}{2}$ and it follows that the expression for $u_0(x,t)$ in \eqref{elliptic_general} 
		containing the theta-functions is real.
		Hence we can use the identity $\mbox{dn}^2(u;m)+m\mbox{sn}^2(u,m)=1$ and deduce from \eqref{e:|u|^2}  that 
		\[
		u_0(x,t)=(\eta_1+\eta_2)^2\mbox{dn}\left(|z_1-\overline{z_2}|  ( x - x_0 -v t )+K(m);m\right)\e^{2\i(xE+tN)}.
		\]
		In order to prove \eqref{e:dn}  we need to calculate $E=\lim_{z\to\infty}\int_{z_2}^z[\d p(\lambda)-\d\lambda]-z_2$.
		We observe that the integral  of $\d p(\lambda)-\d\lambda$  along the vertical line passing through the branch points is equal to zero
		namely
		\[
		0=\int_{-\i\infty+\Re{z_2}}^{\i\infty+\Re{z_2}}[\d p(\lambda)-\d\lambda]=\int_{-\i\infty+\Re{z_2}}^{\overline{z_2}}[\d p(\lambda)-\d\lambda]+\int^{\i\infty+\Re{z_2}}_{z_2} [\d p(\lambda)-\d\lambda]-(z_2-\overline{z_2}).
		\]
		We  can  conclude that $\int^{\i\infty+\Re{z_2}}_{z_2} [\d p(\lambda)-\d\lambda]=\i\Im(z_2)$ so that $E=\i\Im(z_2)-z_2=-\Re(z_2)=\frac{v}{2}$.
		Then plugging the value of $N$ as in \eqref{e:defEN} one obtains   \eqref{e:dn}.

		In order to prove \eqref{elliptic_cn} in the case  $\Im(z_1)=\Im(z_2)=\eta_2$ we need to consider the expression \eqref{ellitpic_cn0}, namely 
		\[
 u_0(x,t)=\Re(z_1-z_2)\Im(z_2)\frac{\theta_3(0) \theta_3(\frac{\Omega}{2\pi}+\frac{\tau+1}{2})}{\theta_3(\frac{\Omega}{2\pi})\theta_3'(\frac{\tau+1}{2})c}\e^{2\i(xE+tN)}\,.
 \]
		To reduce  it  to Jacobi elliptic function we use \cite{Lawden}
	\begin{align*}
		& \theta_1(z)=\i\e^{\i\pi z+\frac{\i\pi}{4}\tau}\theta_3\left(z+\frac{1+\tau}{2}\right)\,,\qquad  \theta'_1(0)=\pi\theta_2(0)\theta_3(0)\theta_4(0)\,,\\
		&  \theta_2(z)=\e^{\i\pi z+\frac{\i\pi}{4}\tau}\theta_3\left(z+\frac{\tau}{2}\right),\quad \mbox{cn}(2K(m)z;m)=\dfrac{  \theta_3(\frac{1}{2})\theta_2(z)}{  \theta_2(0)\theta_3(z+\frac{1}{2})}\,, \quad 
		\theta_1(z)=-\theta_2(z+\frac{1}{2})\,,
	\end{align*}
	and then obtain
\begin{align*}
	u_0(x,t)&=\frac{\Re(z_1-z_2)\eta_2}{c\pi} \frac{ \theta_2(\frac{\Omega}{2\pi}+\frac{1}{2})}{\theta_3(\frac{\Omega}{2\pi})\theta_2(0)\theta_4(0)}\e^{2\i(xE+tN)}\e^{-\frac{\i\Omega}{2}}\\
	&=
	\frac{2 \i K(m)\Re(z_1-z_2)\eta_2}{|z_1-\overline{z_2}|\pi\theta_4^2(0)}\mbox{cn}\left(\frac{\Omega  K(m)}{\pi}+K(m)\right)\e^{2\i(xE+tN)}\e^{-\frac{\i\Omega}{2}}\\
	&=2\i \e^{2\i(xE+tN)} \e^{-\frac{\i\Omega}{2}} \frac{\Re(z_1-z_2)\eta_2}{|z_1-\overline{z_2}|\sqrt{1-m}}\mbox{cn}\left(\frac{\Omega  K(m)}{\pi}+K(m)\right)
	=-2\i\eta_2\e^{2\i(xE+tN)}\e^{-\frac{\i\Omega}{2}}\mbox{cn}\left(\frac{\Omega  K(m)}{\pi}+K(m)\right).
\end{align*}
Setting  $z_1=-\frac{v}{2}+\i\eta_1$ and  $z_2=-\frac{v}{2}+\i\eta_2$, with $\eta_2>\eta_1$ then $E=\frac{v}{2}$,
$N=-\frac{v^2}{4}-\frac{\omega_0}{2}$ and one recovers the expression  \eqref{elliptic_cn}.
	
			\end{proof}

%%%%%%%%%%%%%%%%%%%%%%%%%%%%%%%%%%%%%%%%%%%%%%%%%%%%%%%%%%%%%%%%%%%%%%%%%%%%%%%%%%%%%%%%%%%%%%%%%%%
\subsection{Proof of Proposition \ref{propm}}\label{a:propm}

%
%%%%%%%%%%%%%%%%%%%%%%%%%%%%%%%%%%%%%%%%%
%
Our proofs of the list of properties of $W^s(x;z)$ recorded in Proposition~\ref{propm} rely on estimates of matrices and matrix-valued functions. Throughout, we use the following standard norms:
\[
  |A|_F :=  \operatorname{Tr}(A^\dagger A) = \left( \sum_{i,j=1}^2 |a_{ij}|^2 \right)^{1/2}, \qquad A \in \C^{2\times 2}
\]
denotes the usual Frobenius norm; while  
\[
\|M\|_{L^\infty(\Omega)} := \operatorname*{ess\,sup}_{x\in\Omega}\,|M(x)|_{F},
\qquad M: \Omega \to \C^{2\times 2}.
\]
is the usual $L^\infty$ norm.

\begin{proof}[\textbf{Proof of properties \ref{analyticity}-\ref{jumps}.}]
Denote by $m_1^s(x;z)$ and $m_2^s(x;z)$ the first and second columns of $m^s(x;z)$, respectively. 
Below we prove the result for the first column $m^\ell_1(x;z)$. 
The proofs for $m_2^\ell(x;z)$, $m_1^r(x;z)$ and $m_2^r(x;z)$ are analogous. 
From \eqref{e:Intm} we can rewrite the integral equation for $m_1^\ell$ in the form
\begin{equation}\label{e:m1.operator}
(1 - \mathcal{K})m_1^\ell = e_1, \qquad e_1 = \begin{pmatrix}
    1 \\
    0
\end{pmatrix},
\end{equation}
where $\mathcal{K}$ is the integral operator 
\bse \label{e:m.kernel}
\begin{gather}
    (\mathcal{K} f)(x;z) := \int_{-\infty}^x K(x,y;z) f(y;z) \d y, \\%
    K(x,y;z) = \begin{bmatrix} 1 & 0 \\ 0 & \e^{2\i (p^\ell - E^\ell)(x-y)} \end{bmatrix} O^\ell(y;z)^{-1} \Delta U^\ell(y)\, O^\ell(y;z),
    \qquad 
    \Delta U(y) := U(y) - U_0^\ell(y).
\end{gather}
\ese

% \begin{align}
% +\int_{-\infty}^x \diag\begin{pmatrix}
%     1 & \e^{2\i(p^\ell-E^\ell)(x-y)}
% \end{pmatrix}O^\ell(y;z)^{-1}\Delta U^\ell(y) O^\ell(y;z)m_1^\ell(y;z)\d y,
% \end{align}
% where $\Delta U(y)=U(y)-U_0^\ell(y)$.
% Define the integral operator $K$ by
% \be\label{e:K}
% (Kf)(x;z)=\int_{-\infty}^x \mathcal{K}(x,y;z)f(y;z)\d y, \quad \mathcal{K}(x,y;z)=\diag\begin{pmatrix}
% 	1 & \e^{2\i(p^\ell-E^\ell)(x-y)}
% \end{pmatrix}O^\ell(y;z)^{-1}\Delta U^\ell(y) O^\ell(y;z).
% \ee
% For a matrix-valued function $M$, we set

% where, for $A=(a_{ij})\in\mathbb{C}^{2\times 2}$,
% \[
% \|A\|_{1} := \max_{1\le j\le 2}\sum_{i=1}^{2} |a_{ij}|.
% \]
Recall that $O(z;x)$, see \eqref{defO}, is unimodular, analytic in $\C \setminus (\Sigma^\ell \cup \Sigma^\ell_0)$, and uniformly bounded for $x\in \R$ and $z$ bounded away from the spectral endpoints $\partial \Sigma^\ell$ where it has $\frac{1}{4}$-root singularities. Also, from Lemma~\ref{dp_inequality}, $\Im p^\ell(z) >0$ for $z\in \C^+ \setminus \Sigma_1^\ell$ and $\Im p^\ell(z) = 0$ for $z \in \R $ and $z_\pm \in \Sigma^\ell$.
For fixed $\epsilon >0$, 
let $D^+_\epsilon = (\overline{\C^+} \cup \Sigma^\ell) \setminus  \left( \bigcup_{p \in \partial \Sigma^\ell} B_\epsilon(p) \right)$, 
where $B_\epsilon(z_0)=\{z\in\C: |z-z_0|<\epsilon\}$.
Then 
\begin{equation}
    |K(x,y;z)|_F \leq c_\epsilon \, |\Delta u^\ell(y) | , \qquad z \in \overline{D^+}, \quad y<x\leq x_0\,,
\end{equation}
where $c_\epsilon$ is proportional to $\|O \|_{L^\infty(\R \times D_\epsilon^+)}$.
% where $c_\eps(x_0)=\|O(\cdot,z)\|_{L^\infty(-\infty,x_0)}\|^2$. 
It follows that any fixed $x_*\in \Real$ and for every $f(x;z)\in L^{\infty}((-\infty,x_*)\times \overline{D^+})$, we have
\begin{equation}\label{e:Kestimate}
\| \mathcal{K} f(\cdot,z)\|_{L^{\infty}(-\infty, x_*)} \leq c_\epsilon(x_*) \|\Delta u^\ell\|_{L^1(-\infty, x_*)} |f(\cdot,z)|_{L^\infty(-\infty, x_*)}.
\end{equation}
A straightforward induction argument then shows that 
\begin{align*}
	\|\mathcal{K}^{j}f\|_{L^{\infty}(-\infty,x_*)} \leq \frac{1}{j!} \left( c_\epsilon(x_*) \|\Delta u^\ell\|_{L^1(-\infty,x_*)} \right)^j \|f\|_{L^{\infty}(-\infty,x_*)},
\end{align*}
and the resolvent operator exists and satisfies \[
    \|(1-\mathcal{K})^{-1}\|_{L^{\infty}(-\infty,x_*) \to L^{\infty}(-\infty,x_*) } 
    \leq \e^{ c_\epsilon(x_*) \|\Delta u^\ell\|_{L^1(-\infty,x_*)} }.
\]
Consequently the Neumann series 
\begin{equation}
    \label{e:m1.nuemann}
    m_1^\ell(x;z) = \sum_{j=0}^\infty (\mathcal{K}^j \e_1) (x;z),
\end{equation}
defines the unique solution of \eqref{e:m1.operator} for each $x \in \R$. Moreover, this solution is analytic for $z \in \C^+ \setminus( \Sigma^\ell \cup \Sigma_0^\ell)$ with continuous boundary values on $\R \cup \operatorname{int}(\Sigma_\pm^\ell)$. The proofs for the other columns are similar.

Now consider the boundary values $m^\ell(x;z_\pm)$ for $z_\pm \in \Sigma^\ell \cup \Sigma^\ell_0$. From the above argument $m^\ell(x;z_\pm)$ is the unique solution of the Volterra equation \eqref{e:Intml}. 
For $z \in \Sigma^\ell$, using the jump relations $p^\ell(z_+) + p^\ell(z_-) =0$ and $O^\ell(x;z_+) = O^\ell(x;z_-)  \e^{2\i x E^\ell \sigma_3}(i \sigma_1)$ we have
\begin{multline*}
    m^\ell(x;z_+) = I + \int_{-\infty}^x \e^{\i(x-y)(p^\ell(z_-)+ E^\ell) \sigma_3}  \sigma_1 \e^{-2\i y E^\ell \sigma_3} O(y;z_-)^{-1} \Delta U^\ell(y) O^\ell(y;z_-) \\
    \times \e^{2\i y E^\ell  \sigma_3} \sigma_1 m^\ell(y;z_+) \e^{-\i(x-y)(p^\ell(z_-)+ E^\ell) \sigma_3} \d y, \qquad z \in \Sigma^\ell.
\end{multline*}
Defining $\eta^\ell(x;z_+) := \e^{2\i x E^\ell \sigma_3} \sigma_1 m^\ell(x;z_+) \sigma_1 \e^{-2\i x E^\ell \sigma_3}$, the above equation is equivalent to 
\begin{multline*}
    \eta^\ell(x;z_+) = I + \int_{-\infty}^x \e^{-\i(x-y)(p(z_-) - E^\ell) \sigma_3} O(y;z_-)^{-1} \Delta U^\ell(y) O^\ell(y;z_-) \eta^\ell(y;z_+) \e^{\i(x-y)(p(z_-) - E^\ell) \sigma_3}\, \d y.
\end{multline*}
This is exactly the integral equation \eqref{e:Intml} satisfied by $m^\ell(x;z_-)$. Uniqueness implies that $\eta^\ell(x;z_+) = m^\ell(x;z_-)$ for $z \in \Sigma^\ell$. Using the relation \eqref{e:defm} between $W^\ell(x;z)$ and $m^\ell(x;z)$ together with the jump relations of $O^\ell(x;z)$ and $p^\ell(z)$ for $z \in \Sigma^\ell$ gives \eqref{jumpW}. This proves property~\ref{jumps}. 

A similar argument shows that 
\[
    m^\ell(x;z_+) = \e^{-\i \Omega^\ell \sigma_3} m^{\ell}(x;z_-) \e^{\i \Omega^\ell \sigma_3}, \qquad z \in \Sigma^\ell_0 \cap \C^+\,,
\]
and through \eqref{e:defm} it follows that $W^\ell(x;z_+) = W^\ell(x;z_-)$ for $z \in \Sigma_0^\ell \cap \C^+$. Morerra's theorem then implies that $W^\ell(x;z)$ is analytic in $\C^+ \setminus \Sigma_1^\ell$ as claimed in property~\ref{analyticity}. 
\end{proof}
%%%%%%%%%%%%%%%%%%%%%%%%%%%%%%%%%%%%%%%%%%%%%%%%%%%%%%%%%%%%%%%%%%%%%%%%%%%%%%%%%%%%%%%%

%%%%%%%%%%%%%%%%%%%%%%%%%%%%%%%%%%%%%%%%%%%%%%%%%%%%%%%%%%%%%%%%%%%%%%%%%%%%%%%%%%%%%%%%
\begin{proof}[\textbf{Proof of property \ref{asymptotics}.}]
We prove the result for $m_1^\ell(x;z)$ here. 
The proofs for the others columns are analogous, and omitted for brevity. 
%We drop the superscript ``$\ell$'' for brevity.
Let $R^\ell = \max_{z \in \Sigma^\ell} |z|$.
For $\Delta u^\ell \in L^{1}(\R^-)$, property~\ref{analyticity} of Prop.~\ref{propm} shows that the function $m_1^\ell(x;z)$ defined by \eqref{e:m1.nuemann} satisfies \eqref{e:m1.operator} for each $z \in \overline{\C^+}$ with $|z| > R^\ell$. Recalling the relation \eqref{e:defm} relating $W_1^\ell$ to $m_1^\ell$ and observing that $p^\ell(z) - z - E^\ell$ and $O^\ell(x;z)$ are analytic functions for $|z| > R$, the existence of an expansion of $W_1^\ell$ to order $z^{-(n-1)}$ follows immediately from the following lemma.

\begin{lemma}
\label{lem:iterates}
For $\Delta u^\ell \in \mathcal{W}^{n,1}(\R^-)$ the iterates $\mathcal{K}^j e_1$ in \eqref{e:m1.nuemann} admit expansions of the form
\begin{equation}\label{e:iterates}
    (\mathcal{K}^j e_1)(x;z) =  \begin{dcases} \sum_{p = \lceil j/2 \rceil}^{n-1} F_{j,p}(x) z^{-p} + \mathcal{O}(z^{-n}),  & j \leq 2(n-1), \\
   \mathcal{O}(z^{-n}) & j>2(n-1)\,,
   \end{dcases}
\end{equation}
with coefficients $F_{j,p} \in \mathcal{W}^{n-p,1}(\R^-)$.
\end{lemma}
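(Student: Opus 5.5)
We argue by induction on $j$, reducing the problem to repeated integration by parts in the Volterra operator $\mathcal{K}$ of \eqref{e:m.kernel}. For $|z|>R^\ell$ in $\overline{\C^+}$ we first expand the kernel. Since $O^\ell(y;z)$ is analytic at infinity with $O^\ell = I + \sum_{k\ge1} O_k^\ell(y) z^{-k}$, where the coefficients are smooth bounded functions of $y$ (elliptic data), the conjugated potential
$$Q(y;z):=O^\ell(y;z)^{-1}\Delta U^\ell(y)O^\ell(y;z)$$
has an expansion $Q=\sum_{k\ge0} Q_k(y) z^{-k}$. Each $Q_k$ is a bounded smooth matrix times $\Delta u^\ell$ or its conjugate, so $Q_k\in\mathcal{W}^{n,1}(\R^-)$ with its derivatives controlled by those of $\Delta u^\ell$. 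Moreover $Q_0=\Delta U^\ell$ is off-diagonal. We also use $p^\ell(z)-E^\ell = z+\pi_1/z+\dots$, so that the oscillatory factor is $\e^{2\i(p^\ell-E^\ell)(x-y)}=\e^{2\i z(x-y)}\,(1+\mathcal{O}((x-y)/z))$. To avoid unbounded $(x-y)$ growth, we keep the exact phase $\phi(z)=p^\ell(z)-E^\ell$ and only expand the amplitude, noting $1/\phi(z)=z^{-1}(1+\mathcal{O}(z^{-2}))$ analytically at infinity.

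The key step is to track parity: diagonal versus off-diagonal parts. Write $f=\binom{f_1}{f_2}$. Then $(\mathcal{K}f)_1=\int_{-\infty}^x (Qf)_1\,\d y$ carries no oscillation, while
$$(\mathcal{K}f)_2=\int_{-\infty}^x \e^{2\i\phi(x-y)}(Qf)_2(y)\,\d y.$$
For $g\in\mathcal{W}^{k,1}$, integrating by parts $k$ times gives
$$\int_{-\infty}^x\e^{2\i\phi(x-y)}g(y)\,\d y=\sum_{i=0}^{k-1}\frac{g^{(i)}(x)}{(-2\i\phi)^{i+1}}\,(-1)^{i}+\mathcal{O}\bigl(\phi^{-k}\|g^{(k)}\|_{L^1}\bigr),$$
where boundary terms at $-\infty$ vanish because $\Im\phi\ge0$ and $g^{(i)}\to0$. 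The remainder is uniform in $x$ because $|\e^{2\i\phi(x-y)}|\le1$. Thus each passage through the oscillatory (second) component gains a factor $z^{-1}$ and costs one derivative. Since $Q_0$ is off-diagonal and the leading entries of $f$ alternate components, every two applications of $\mathcal{K}$ force at least one passage through the second component. This produces the lower index $\lceil j/2\rceil$. The induction hypothesis is that $\mathcal{K}^je_1$ has the stated form, with coefficients $F_{j,p}\in\mathcal{W}^{n-p,1}$; for $j$ odd the leading term lies in the second component, for $j$ even in the first. Applying $\mathcal{K}$ once more and multiplying the expansions of $Q$ and $F_{j,p}$ (products of $\mathcal{W}^{m,1}$ functions with bounded smooth ones stay in $\mathcal{W}^{m,1}$, and products of two $\mathcal{W}^{m,1}$ functions also do, using $\mathcal{W}^{1,1}\subset L^\infty$), we get two cases. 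In the first component, antiderivatives $\int_{-\infty}^x$ of $L^1$ functions are merely bounded, but they enter only the diagonal-type terms and carry the correct power of $z$. In the second component, integration by parts produces $F_{j+1,p}$ for $p\ge\lceil (j+1)/2\rceil$.

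The main obstacle is regularity bookkeeping. The first-component antiderivatives $\int_{-\infty}^x h$ are not in $L^1(\R^-)$ in general, so the claim $F_{j,p}\in\mathcal{W}^{n-p,1}$ must be read appropriately: such terms are only bounded with derivatives in $\mathcal{W}^{n-p-1,1}$. I would try first to show that these nonintegrable antiderivatives are always multiplied by $\Delta u$ at the next step, so that the subsequent input to an integration by parts is again integrable. Under this reading, the key point is that the derivative count never exceeds $n$ before the power $z^{-n}$ is reached, since each $z^{-1}$ costs exactly one derivative. Finally, for $j>2(n-1)$, the recursion gives at least $n$ passages through the oscillatory component, so the term is $\mathcal{O}(z^{-n})$. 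The tail $\sum_{j>2(n-1)}$ is summable by the $1/j!$ bound from \eqref{e:Kestimate} applied to $\mathcal{K}^{j-2n+2}$ acting on $\mathcal{K}^{2n-2}e_1=\mathcal{O}(z^{-n})$.
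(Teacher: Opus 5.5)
Your proposal follows the paper's proof essentially step for step: expand the kernel using the Laurent series of $O^\ell$ and $(O^\ell)^{-1}$, integrate by parts in the oscillatory second row using $1/(p^\ell-E^\ell)=z^{-1}(1+\mathcal{O}(z^{-2}))$, and use the alternation between the two rows (off-diagonal $Q_0$) to get the index $\lceil j/2\rceil$. Your caveat that the first-row antiderivatives $\int_{-\infty}^x(\cdot)\,\d y$ are only bounded, not in $\mathcal{W}^{n-p,1}(\R^-)$, is a fair correction of the paper's bookkeeping and does not affect the expansion; the one slip is in the tail argument, which must start from $\mathcal{K}^{2n-1}e_1=\mathcal{O}(z^{-n})$, because $\mathcal{K}^{2n-2}e_1$ is in general only $\mathcal{O}(z^{-(n-1)})$.
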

The expansion coefficients  of $W_1^\ell$ can be directly computed from careful analysis of the Neumann iterates, but this is tedious. Once the existence of the an expansion to order $n$ is established, the coefficients are computed using the method presented in section~\ref{a:asymW}. Then our proof of property~\ref{asymptotics} is complete once Lemma~\ref{lem:iterates} is established. 
\end{proof}
\begin{proof}[Proof of Lemma~\ref{lem:iterates}]
The function $O^\ell(x;z)$ is unimodular, periodic and $C^\infty$ as a function of $x$, and analytic for $|z| > R^\ell$. For $z > |R|$ both it and its inverse admit convergent Laurent expansions
\[
    O^\ell(x;z) = \sum_{k=0}^\infty O^\ell_k(x) z^{-k},
    \qquad
    O^\ell(x;z)^{-1} = \sum_{k=0}^\infty Q^\ell_k(x) z^{-k},
\]
where the coefficients $O^\ell_k(x)$ and $Q^\ell_k(x)$ are bounded periodic functions expressible in terms of the asymptotic potential $u_0^\ell(x)$; the first term $O^\ell_1$ is given by \eqref{e:Oasymp}.
It follows that for any $n\in \N$ the kernel $K(x,y;z)$ of \eqref{e:m.kernel} can be expanded as
\[
% \begin{gathered}
%     K(x,y;z) = \begin{bmatrix} 1 & 0 \\ 0 & e^{i\phi(z,x-y)} \end{bmatrix} \left\{ \Delta U^\ell + \sum_{j=1}^\infty z^{-j}   A_j\left(y, \Delta u^\ell(y), \overline{ \Delta u^\ell(y) }\, \right)   \right\} \\
%     \phi(z,s) =2 s (p^\ell(z) - E^\ell)
% \end{gathered}
\begin{gathered}
    K(x,y;z) =     
       \begin{bmatrix} 1 & 0 \\ 0 & \e^{\i\phi(x-y,z)} \end{bmatrix} 
       \left\{ \Delta U^\ell(y) + 
       \sum_{j=1}^{n-1} 
       \frac{A_j(y)}{z^{j}}  + \frac{1}{z^n} R_n(x,y;z)
       \right\}, \\
    A_j(y) = \Delta u^\ell(y) \sum_{s=0}^j O^\ell_s(y) \begin{bsmallmatrix} 0 & 1 \\ 0 & 0 \end{bsmallmatrix} Q^\ell_{j-s}(y) - \overline{\Delta u^\ell(y)} \sum_{s=0}^j O^\ell_s(y) \begin{bsmallmatrix} 0 & 0 \\ 1 & 0 \end{bsmallmatrix} Q^\ell_{j-s}(y), \qquad j\geq 1 \\ 
    \phi(s,z) =2 s (p^\ell(z) - E^\ell)
\end{gathered}
\]
where the coefficients $A_j(y) \in \mathcal{W}^{n,1}(\R^-)$. 

% For a vector valued function 
% $f(x;z) = \begin{bmatrix} f_1(x;z) & f_2(x;z) \end{bmatrix}^\intercal$
% it follows that 
% \begin{multline*}
%     (\mathcal{K} f)(x;z) = \int_{-\infty}^x 
%     \begin{bmatrix} \Delta u^\ell(y) f_2(y;z) \\ -e^{i\phi(x-y,z)} \overline{\Delta u^{\ell}(y)} f_1(y;z)
%     \end{bmatrix} \d y \\
%      + \sum_{j=1}^{n-1} z^{-j} \left[ e_1 e_1^\intercal \int_{-\infty}^x
%          A_j(y) f(y;z) \d y + e_2 e_2^\intercal \int_{-\infty}^x
%           e^{i\phi(x-y,z)} A_j(y) f(y;z) \d y 
%     \right] \\
%      + \frac{1}{z^n} \int_{-\infty}^x \begin{bmatrix} 1 & 0 \\ 0 & e^{i\phi(x-y,z)} \end{bmatrix}  R_n(x,y;z) f(y) \d y   
% \end{multline*}

Consider the first iterate $\mathcal{K}e_1$, 
\[
    (\mathcal{K}e_1)(x;z) = \sum_{j=0}^{n-1} z^{-j} \int_{-\infty}^x
       \begin{bmatrix} 1 & 0 \\ 0 & \e^{\i\phi(x-y,z)} \end{bmatrix}  A_j(y) e_1 \d y  
       + \frac{1}{z^n} \int_{-\infty}^x R_n(x,y;z)e_1 \d y\,.
\]
Repeated integration by parts in the second row gives
\begin{multline}
    (\mathcal{K}e_1)(x;z) = \sum_{j=0}^{n-1} z^{-j} \left[ \Etl \int_{-\infty}^x A_j(y) e_1 \d y + \Ebr \sum_{s=1}^{j} \left( \frac{z}{2\i(p^\ell(z) - E^\ell)} \right)^s \partial_x^s \left( A_{j-s}(x) e_1 \right)  \right] \\
    + z^{-n}\left[
    \Ebr \sum_{s=1}^{n-1} \left( \frac{z}{2\i(p^\ell(z) - E^\ell)} \right)^s \partial_x^{s} \left( A_{n-1-s}(x) e_1 \right) \right. \\
    \left. + \int_{-\infty}^x \left( R_n(x,y;z) e_1  
      + \sum_{s=1}^{n-1} \e^{\i\phi(x-y,z)}\partial_x^{s+1} \left( A_{n-1-s}(x) e_1 \right) \right) \d y \right].
\end{multline}    
We observe that: (1) $\frac{z}{p^\ell(z) -E^\ell} = 1 + \mathcal{O}(z^{-2})$; (2) the coefficients of $z^{-j}$, $1\leq j\leq n-1$ are $\mathcal{W}^{n-j,1}(\R^-)$ functions of $x$; (3) the coefficient of $z^{-n}$ is bounded under our assumption on the smoothness of $\Delta u^\ell$. 

Now proceed by induction. Suppose that $(\mathcal{K}^m e_1)(x;z)$ admits an expansion of the form \eqref{e:iterates}, then by repeating the integration by parts argument one sees that if the leading coefficient $F_{m,\lceil{m/2}\rceil}(x)$ of $(\mathcal{K}^m e_1)(x;z)$ has a non-zero entry in its second row, then $(\mathcal{K}^{m+1} e_1)(x;z)$ decays at the same rate, but its leading coefficient $F_{m+1,\lceil{m+1/2}\rceil}(x)$ vanishes in its second row. The subsequent iterate $(\mathcal{K}^{m+2} e_1)(x;z)$ vanishes in its first entry, and so integrating by parts we have we find $(\mathcal{K}^{m+2} e_1)(x;z)$ decays one power of $z$ faster than $(\mathcal{K}^m e_1)(x;z)$. We note the smoothness assumption is always sufficient to integrate by parts to order $\mathcal{O}(z^{-n}).$ 
\end{proof}

In order to prove the last condition of Proposition~\ref{propm} it's convenient to work with an integral equation more directly related to the Jost functions $W^s(x;z)$ and which captures the isolates the singular behavior at the endpoints $z_j^s$ of $\Sigma_1^s. $
Define 
\begin{gather}
\begin{aligned}
	&\widehat W^s(x;z) := 
    \begin{dcases}
     \gamma^s(z) \e^{-\i x E^s \sigma_3}  W^s(x;z),
     & |z-z_2^s|<|z_2^2-z_1^s|, \\
      \gamma^s(z)^{-1} \e^{-\i x E^s \sigma_3} W^s(x;z),
     & |z-z_1^s|<|z_2^2-z_1^s|,
	\end{dcases} \\[.5em]
	&\widehat W_0^s(x;z) := 
    \begin{dcases}    
    \gamma^s(z) \e^{-\i x E^s \sigma_3} W_0^s(x;z) , 
    & |z-z_2^s|<|z_2^2-z_1^s|, \\
    \gamma^s(z)^{-1} \e^{-\i x E^s \sigma_3} W_0^s(x;z), 
    & |z-z_1^s|<|z_2^2-z_1^s|,
    \end{dcases} 
    \end{aligned}\\
    \nonumber
    \Delta \widehat  U^s(x) := 
	\begin{bmatrix}
		0 & \left( u(x) - u^s(x,0) \right) \e^{-2\i E^s x} \\
		- \overline{\left( u(x) - u^s(x,0) \right)} \e^{2\i E^s x} & 0 \\
	\end{bmatrix}.
\end{gather}
Then using \eqref{e:defW0}, \eqref{e:defm}, and the integral equations defining $m^s$, \eqref{e:Intm}, we find that 
$\widehat W^s$ satisfies the integral equation
% \[
% W^s(x) =W_0^s(x) \left[ I +  \int_{\infty_s}^x  W_0^s(y)^{-1} \Delta U^s(y) W^s(y) \d y \right].
% \]
% Before moving on to study the behavior of these solutions we make one further reduction. Introduce rescaled Jost functions $\widehat W^s(x;z)$ and rescaled background Jost functions
\begin{gather}\label{What.int.eq}
	\widehat W^s(x;z) = \widehat W^s_0 (x;z) + 
    (\widehat{\mathcal{K}}^{\,s} W^s)(x;z), \\
\widehat{\mathcal{K}}^{\,s} f(x;z)=   
    \int_{\infty_s}^x \widehat{K}^{s}(x,y;z) \Delta \widehat U^s(y) f(y;z) \, \d y\,,
\end{gather}
where
\begin{equation}\label{What.kernel}
	\widehat{K}^{s}(x,y;z) 
    = \widehat W_0^s(x;z) \widehat W_0^s(y;z)^{-1} 
    = \e^{\i x E^s \sigma_3} W_0^s(x;z) W_0^s(y;z)^{-1} \e^{-\i y E^s \sigma_3}. 
\end{equation}

\begin{lemma}\label{lem:kernel}
	$\widehat{K}^s(x,t;z)$ is an analytic function of $z$ for each value of $x,y \in \R$.
\end{lemma}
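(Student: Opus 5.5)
The plan is to show that the kernel $\widehat K^s(x,y;z)=\e^{\i xE^s\sigma_3}W_0^s(x;z)W_0^s(y;z)^{-1}\e^{-\i yE^s\sigma_3}$ has no jumps anywhere in $\C$ and no genuine singularities. Then, by Morera's theorem and Riemann's removable singularity theorem, it is entire in $z$.

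The first step is to use the representation \eqref{defWbg} at $t=0$, $W_0^s(x;z)=O^s(z;x,0)\e^{-\i(x-x_0^s)(p^s(z)-E^s)\sigma_3}$. From it, $W_0^s$ is analytic in $\C\setminus(\Sigma_0^s\cup\Sigma_1^s\cup\Sigma_2^s)$ and unimodular there. Unimodularity follows because $\det O^s$ has no jumps (the jump matrices in \eqref{e:Ojump} have determinant $1$), is bounded near the branch points (quartic-root behavior, so the product has at most $|z-z_j|^{-1/2}$ growth), and tends to $1$ at infinity. Consequently $W_0^s(y;z)^{-1}$ is analytic on the same set, and so is $\widehat K^s$.

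The second step is to check the jumps, using \eqref{e:jumpW0}. On $\Sigma_0^s$ we have $W_0^s(z_+)=W_0^s(z_-)$, so $W_0^s$ itself has no jump there. On $\Sigma_1^s\cup\Sigma_2^s$ we have $W_0^s(x;z_+)=W_0^s(x;z_-)J$ with $J=\i\e^{2\i x_0^sE^s\sigma_3}\sigma_1$, and $J$ is independent of $x$. Hence
\[
W_0^s(x;z_+)W_0^s(y;z_+)^{-1}=W_0^s(x;z_-)JJ^{-1}W_0^s(y;z_-)^{-1}=W_0^s(x;z_-)W_0^s(y;z_-)^{-1},
\]
so $\widehat K^s$ is continuous across every open arc. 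The conjugation by $\e^{\pm\i E^s\sigma_3}$ does not depend on $z$. By Morera's theorem, $\widehat K^s$ is therefore analytic in $\C$ minus the four branch points $\{z_1^s,z_2^s,\ov{z_1^s},\ov{z_2^s}\}$. One must also confirm that the boundary values are continuous up to the interior of the arcs, which follows from the explicit theta-function formula \eqref{defO}.

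The third step, which I expect to be the only delicate point, is removability at the branch points. Near each endpoint, $O^s$ and $(O^s)^{-1}$ are $\mathcal{O}(|z-z_j|^{-1/4})$ by the quartic-root condition in RHP \ref{RHP:O}. The exponential factor $\e^{\mp\i(x-x_0)(p-E)\sigma_3}$ stays bounded, since $p^s$ has only a square-root zero there by the local analysis preceding Lemma \ref{dp_inequality}. The product $\widehat K^s$ is thus $\mathcal{O}(|z-z_j|^{-1/2})$, which is $o(|z-z_j|^{-1})$, and any isolated singularity with that growth is removable. This is the same argument used in the proof of Proposition \ref{prop:W0}. One should also note that only the $1/4$-powers from $\gamma$ produce the singular growth: the theta-function denominators $\theta_3(A(z)\mp A(\infty))$ do not vanish near the endpoints, given that $\theta_3(\Omega/2\pi)\neq0$ for real $\Omega$. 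If one wants to avoid this check, the alternative is to argue directly that $\det W_0^s\equiv1$ and that the entries of $W_0^s$ are at most quartic-root singular, which gives the same bound.

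Combining the three steps, $\widehat K^s(x,y;\cdot)$ is entire for each fixed $x,y\in\R$, and in particular analytic for every $z$.
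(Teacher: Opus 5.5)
Your proposal is correct and follows the same route as the paper. The jump $\i\e^{2\i x_0^sE^s\sigma_3}\sigma_1$ of $W_0^s$ on $\Sigma_1^s\cup\Sigma_2^s$ is independent of $x$, so it cancels in $W_0^s(x;z)W_0^s(y;z)^{-1}$, and the at most $\tfrac12$-root growth of this product at the four branch points makes those singularities removable.

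You also make explicit what the paper leaves implicit: the absence of a jump on $\Sigma_0^s$, unimodularity (so that $W_0^s(y;z)^{-1}$ is analytic), and boundedness of the exponential factor near the endpoints. One side remark is off: $\theta_3(\Omega/2\pi)\neq0$ says nothing about the denominators $\theta_3(A(z)\mp A(\infty))$. What carries the argument there is the quartic-root condition in RHP~\ref{RHP:O}, which you invoke anyway.
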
	
\begin{proof}
	By construction the function $W_0^s(x;z)$ is analytic in $\C \setminus (\Sigma_1 \cup \Sigma_2)$ and bounded away from the four branch points $z_1, \bar z_1, z_2, \bar z_2$ where it admits at worst $\frac{1}{4}$-root singularities. Along $\Sigma_1 \cup \Sigma_2$ it satisfies a constant jump  \eqref{e:jumpW0} in both $x$ and $z$. The product $W_0^s(x;z) W_0^s(y;z)^{-1}$ is then continuous across $\Sigma_1 \cup \Sigma_2$ and so can admit at worst isolated singularities at the four branch points. However, the growth condition at each endpoint guarantees the local growth rate of the product is at most $\frac{1}{2}$-root blow up; it follows that the branch point singularities are removable. 
\end{proof}

\begin{proof}[\textbf{Proof of Condition \ref{singularity}.}]
	Suppose that $z \in \operatorname{int}(\Sigma^s)_\pm$, so that $\Im p^s(z) = 0$. 
	Here the $\pm$ subscript indicates that we must regard $z$ as a boundary value from either the left or right side of $\Sigma^s$ as the Jost functions are branched along these contours.  We want to study the behavior of the Jost functions $W^s(x;z)$ as $z \to z_k^s$, $k=1,2$. 
	
	Our starting point is the integral equation \eqref{What.int.eq}.
	We know from Lemma~\ref{lem:kernel} that the integral kernel is analytic in $z$. We now want to investigate its behavior in $x, y$ for $z$ near a branch point. 
	Expanding the kernel \eqref{What.kernel}  using \eqref{defO} and \eqref{e:defm} gives
	\begin{equation}
		\widehat{K}^s(x,y;z) = \frac{\gamma(z)^{-2}}{4} \widehat{K}^s_{1} (x,y;z) +\frac{1}{2} \widehat{K}^s_{0} (x,y;z) +\frac{\gamma(z)^{2}}{4} \sigma_3 \widehat{K}^s_{1} (x,y;z) \sigma_3,
	\end{equation}
	where
	\begin{subequations}
		\begin{multline}
			\widehat{K}^s_{0} (x,y;z) = 
				\begin{bmatrix}
					H_{11}(x;z) H_{22}(y;z)    &   0 \\
					0  &   H_{21}(x;z) H_{12}(y;z)   
				\end{bmatrix} 
				\e^{-\i (x-y) p(z)}  \\
		+
				\begin{bmatrix}
					H_{12}(x;z) H_{21}(y;z) & 0 \\
					0 &  H_{22}(x;z) H_{11}(y;z)
				\end{bmatrix}
				\e^{ \i(x-y) p(z)
			},
		\end{multline}
		\begin{multline}
			\widehat{K}^s_{1} (x,y;z) = 
				\begin{bmatrix*}[r]
					H^s_{11}(x;z) \\ -H^s_{21}(x;z)
				\end{bmatrix*}
				\begin{bmatrix*}[r] 
					H^s_{22}(y;z)    & H^s_{12}(y;z) 
				\end{bmatrix*} 
				\e^{-\i (x-y) p^s(z)}  \\
-
				\begin{bmatrix*}[r]
					H_{12}^s(x;z) \\ -H_{22}^s(x;z) 
				\end{bmatrix*} 
				\begin{bmatrix} 
					H_{21}^s(y;z) & H_{11}^s(y;z)
				\end{bmatrix}
				\e^{ \i(x-y) p^s(z)
			}.
		\end{multline}
		\end{subequations}
		We can further rewrite $\widehat{K}_1^s(x,y;z)$ in the form 
		\begin{multline}
			\widehat{K}^s_{1} (x,y;z) =
			-2\i \begin{bmatrix*}[r]
				H^s_{11}(x;z) \\ -H^s_{21}(x;z)
			\end{bmatrix*}
			\begin{bmatrix} 
				H_{21}^s(y;z) & H_{11}^s(y;z)
			\end{bmatrix}
			\sin( (x-y) p(z)) \\
			+  \begin{bmatrix*}[r]
				H^s_{11}(x;z) \\ -H^s_{21}(x;z)
			\end{bmatrix*}
			\begin{bmatrix} 
				(H^s_{22}-H^s_{21})(y;z)    & (H^s_{12}-H^s_{11})(y;z) 
			\end{bmatrix} 
			\e^{-\i (x-y) p^s(z)}  \\
			- \begin{bmatrix} 
				(H^s_{12}-H^s_{11})(x;z) \\  -(H^s_{22}-H^s_{21})(x;z)
			\end{bmatrix}  
			\begin{bmatrix} 
				H_{21}^s(y;z) & H_{11}^s(y;z)
			\end{bmatrix}
			\e^{ \i(x-y) p^s(z)}\,.
		\end{multline}
		Now consider the behavior of the kernel as $z \to z_2^s$ with $z \in \Sigma_1^s$. 
		The functions $H_{jk}(x;z)$ are uniformly bounded in $x$ for $z$ in sufficiently small neighborhoods of each branch point. 
		The quasi-momentum $p^s(z)$ satisfies $p^s(z) = \bigo{(z-z_2^s)^{1/2}}$ as $z \to z_2^s$.  
		Likewise, the Abel map satisfies $A^s(z) = \bigo{z-z_2^s}$ for $z \to z_2^s$. 
		It follows that 
		\begin{subequations}\label{H.z2}
			\begin{align}
				&\left| H^s_{21\pm}(x; z) - H^s_{11\pm}(x;z) \e^{2\i xE^s} \right| \leq C |z-z_2^s|^{1/2} 
				&&z \to z_2^s, \\
				&\left| H^s_{22\pm}(x; z) - H^s_{21\pm}(x;z) \e^{2\i xE^s} \right| \leq C |z-z_2^s|^{1/2}
				&&z \to z_2^s, 
			\end{align}
		\end{subequations}
		for some constant $C > 0$ which is independent of $x$. 
		Fixing a value $x_* \in \R$, for $z$ in a sufficiently small neighborhood of $z_2$ we have, 
		\begin{equation}
			\begin{aligned}
				&\sup_{y < x< x_*} | \widehat{K}^\ell(x,y;z) | \leq  C(x_* - y + 1),  && z \to z_2, \\
				&\sup_{x_* < x < y} | \widehat{K}^r(x,y;z) | \leq  C(y-x_*  + 1), && z \to z_2,
			\end{aligned}
		\end{equation}
		where the constant $C$ is independent of $x,y, z$. 
        The solution of \eqref{What.int.eq} can then be constructed as a convergent Nuemann series $\widehat{W}^s(x;z)=\sum_{n=0}^\infty \left( \widehat{\mathcal{K}}^{s} \right)^n \widehat{W}_0^s (x;z)$ for each $z \in \Sigma^s$. The estimates in \eqref{e:singW} follow from an argument similar to that between \eqref{e:Kestimate} and \eqref{e:m1.nuemann}. Finally, \eqref{e:W.endpoints} follows from using the dominated convergence to write
        \begin{equation}\label{e:endpoint.integral.eq}
            \lim_{z \to z^s_j} \widehat{W}^s(x;z) = \int_{\infty_s}^x \widehat{K}^s(x,y; z^s_j) \widehat{W}^s(y;z^s_j) \d y = \widehat{W}^s(x;z^s_j),  \qquad j =1,2.
        \end{equation}
\end{proof}

\subsection{Asymptotics of $W(x,t;z)$ as $z\to\infty$}\label{a:asymW}
Let $W(x;z)$ be the solution of 
 the Zhakarov-Shabat linear problem
\[
W_x={ \mathcal L}(u,z)  W\,,
\]
with  ${ \mathcal L}(u,z) = -\i z\sigma_3 + U(x,t)$,  with $ U(x,t)=\begin{pmatrix}
	0 & u(x,t)\\
	-\overline{u(x,t)} & 0
\end{pmatrix}.$

We define the matrix $\Gamma(z)$ as

\begin{equation}
	\label{eq:psi_1}
	W(x;z)=\Gamma(x;z){\rm e}^{-\i(x-x_0)(p(z)-E)\sigma_{3}}\,.
\end{equation}
We assume  an expansion of the form
\begin{equation} \label{eq:Gammak}
	\Gamma(z)=\sum_{k=0}^\infty\dfrac{\Gamma_k}{z^k},\quad 
	\Gamma_{k}(x) =
	\begin{bmatrix}
		\alpha_{k}(x)  & \beta_{k}(x)\\
		-\ov \beta_{k}(x)  & \ov\alpha_{k}(x)
	\end{bmatrix},\;k\geq 0,\quad \alpha_0=1,\;\;\beta_0=0\,.
\end{equation}
The above  expansion certainly holds for the matrix $O(x;z)$ that solves the periodic problem.
The symmetry of the coefficients follows from the symmtery $\ov{W(\ov z)}=\sigma_2 W(z)\sigma_2$ that implies
\[
\begin{pmatrix}
\ov{W_{11}(\ov z)}&\ov{W_{12}(\ov z)}\\
\ov{W_{21}(\ov z)}&\ov{W_{22}(\ov z)}\\
\end{pmatrix}
=\begin{pmatrix}
W_{22}(z)&-W_{21}(z)\\
-W_{12}(z)&W_{11}(z)
\end{pmatrix}.
\]
We consider the ratio
\begin{align}
	\label{eq:proof_5.2} 
	(\pa_x  W) W^{-1}=  -\i\left(z+\sum_{k=1}^\infty\dfrac{\pi_k}{z^k}\right)\left( \1 + \sum_{k=1}^{\infty}\frac{\Gamma_{k} }{z^{k}}\right)\sigma_3 \left( \1 + \sum_{k=1}^{\infty}\frac{\tilde{\Gamma}_{k} }{z^{k}}\right) + \left(\sum_{k=1}^{\infty}\frac{\pa_x\Gamma_{k} }{z^{k}}\right)\left( \1 + \sum_{k=1}^{\infty}\frac{\tilde{\Gamma}_{k} }{z^{k}}\right)\,,
\end{align}
where  $p(z)-E=z+\sum_{k=1}^\infty\dfrac{\pi_k}{z^k}$     
and $\tilde{\Gamma}_{k} $ is still a $2 \times 2$ matrix given by the recursive formula
\be
\label{eq:rec-tilde}
\tilde{\Gamma}_{k}= -\Gamma_{k} -\sum_{j=1}^{k-1}\Gamma_{j}\tilde{\Gamma}_{k-j}.
\ee
Expanding  the Laurent series  in equation ~\eqref{eq:proof_5.2} we obtain 
\begin{equation}
	\begin{split}
		\label{eq:Lau-ser}
		&  (\pa_x  W) W^{-1}\sim -\i z \sigma_3 -\i[\Gamma_1 ,\sigma_3]  \\
		& + \sum_{k=1}^{\infty}z^{-k}\left[ \pa_x \Gamma_{k} + \sum_{l=1}^{k-1} \pa_{x}\Gamma_{l}\tilde{\Gamma}_{k-l} -\i \left([\Gamma_{k+1},\sigma_3] + \sum_{j=1}^{k}[\Gamma_j,\sigma_3]\tilde{\Gamma}_{k+1-j}\right)\right]\\
		&-\i \sum_{k=1}^{\infty}\dfrac{\pi_k}{z^{k}}\sigma_3 -\i \sum_{k=2}^{\infty}z^{-k}\sum_{n=1}^{k-1} \pi_{k-n} \left[ \left([\Gamma_{n},\sigma_3] + \sum_{j=1}^{n-1}[\Gamma_j,\sigma_3]\tilde{\Gamma}_{n-j}\right)\right]\,.
	\end{split}
\end{equation}

Since  $W$ satisfies the ZS equation the tail of the Laurent expansion is  identically zero. 
From 	the ZS linear equation  we have $-\i[\Gamma_1,\sigma_3] =U(x,t)=\begin{pmatrix}
	0 & u(x,t)\\
	-\overline{u(x,t)} & 0
\end{pmatrix}$  so that $\beta_1=\frac{u}{2\i}$.

From the coefficient $z^{-1}$ we obtain 
\be
	\label{eq:lemma-5.1}
	\i\pa_{x} \Gamma_{1}=-  [\Gamma_{2},\sigma_{3}] + [\Gamma_{1},\sigma_3]\Gamma_{1} - \pi_{1}\sigma_3= 0, 
		\ee
		that implies
\[
\i\pa_x \alpha_1=2\beta_1 \ov \beta_1- \pi_1=\dfrac{|u(x,t)|^2}{2}-\pi_1,
\]
	and
	\begin{equation}
	\label{beta_rec}
\i\pa_x \beta_1=2\beta_2-2\beta_1 \ov \alpha_1.
\end{equation}
From the coefficient $z^{-2}$ we obtain 
\begin{align}
		k=2 \qquad & \pa_x \Gamma_{2} + \pa_x \Gamma_1 \tilde{\Gamma}_1 -\i\left[[\Gamma_3,\sigma_3]+[\Gamma_1,\sigma_3]\tilde{\Gamma}_2 +[\Gamma_2,\sigma_3]\tilde{\Gamma}_1 +\pi_2\sigma_3+\pi_1[\Gamma_{1},\sigma_3]  \right]= 0\,.
	\end{align}
	From the equation ~\eqref{eq:rec-tilde} we get that $\tilde{\Gamma}_1= - \Gamma_1$ and $\tilde{\Gamma}_2= -\Gamma_2 + (\Gamma_1)^2$, so we have that 
	\begin{align}    \pa_x \Gamma_2 =\i([\Gamma_3,\sigma_3] - [\Gamma_1,\sigma_3]\Gamma_2+\pi_2\sigma_3+
\pi_1\Gamma_{1}\sigma_3).    \end{align}
		The above equation implies
        \bse
\begin{gather}
	\label{alpha_rec2}
	 \pa_x \alpha_2 = -2\i \beta_1 \ov \beta_2 +\i\sum_{j=1}^2\pi_j\alpha_{2-j}, \\
     \label{beta_rec2}
	 \pa_x \beta_2 = -2\i \beta_3  +2\i \beta_1\ov{\alpha_2}-\i\pi_1 \beta_1.  \end{gather} 
    \ese

Similarly, at $k=3$, we have 
\be
\partial_x\Gamma_3=\i([\Gamma_4,\sigma_3]-[\Gamma_1,\sigma_3]\Gamma_3+\pi_3\sigma_3+\pi_1\Gamma_2\sigma_3+\pi_2\Gamma_1\sigma_3),
\ee
which implies:
\begin{align}
 &\pa_x \alpha_2 = -2\i \beta_1 \ov \beta_2 +\i\sum_{j=1}^3\pi_j\alpha_{3-j}, \\
	\nonumber
	& \pa_x \beta_2 = -2\i \beta_4  +2i \beta_1\ov{\alpha_3}-\i\pi_2 \beta_1-\i\pi_1\beta_2. 
\end{align}

We observe that the recurrence relations for determining the coefficients $\alpha_k$ and $\beta_k$ assume some regularity of $u$. For example for $u\in C^1(\R)$
we can see that $\alpha_j$ and $\beta_j$ are  well defined  for $j=0,1,2$.
\subsection{Expansion of $b(z)$ as $z\to\infty$}\label{s:asymptoticsb}
By definition we have 
\[
b(z)=\det[W^r_1(x;z),W^\ell_1(x;z)]=W^r_{11}(x;z)W^\ell_{21}(x;z)-W^r_{21}(x;z)W^\ell_{11}(x;z)\,.
\]
Since $b(z)$ is $x$-independent, we evaluate it at $x=x_0^\ell$.
  By Proposition~\ref{propm}  if $u-u_0^\ell\in \mathcal{W}^{4,1}(\R^-)$ and $u-u_0^r\in \mathcal{W}^{4,1}(\R^+)$, then  $W^s(x;z) = \sum_{k=0}^{3} W^s_k(x) z^{-k} + \mathcal{O}\left( z^{-4} \right)$ as $z \to \infty$ and moreover  $u\in C^2(\R)$.
 In this case we can consider the expansion  of $W(z)$ as $z\to\infty$  up to order $z^{-4}$.  We have 
\be\label{expansionb}
\begin{split}
&b(z)\e^{\i(x_0^\ell-x_0^r)z}=\frac{\ov \beta_1^r-\ov \beta_1^\ell}{z}+\frac{1}{z^2}(\ov\beta_1^r\alpha_1^\ell-\ov \beta_1^\ell\alpha_1^r+\ov\beta_2^r-\ov\beta_2^\ell-i(x_0^\ell-x_0^r)\pi_1(\ov \beta_1^r-\ov \beta_1^\ell))\\
&+\frac{1}{z^3}\Bigg(\ov{\beta}_1^\ell(\alpha_2^\ell-\alpha_2^r)+(\alpha_1^\ell\ov{\beta}_2^r-\alpha_1^r\ov{\beta}_2^\ell)+(\ov{\beta}_3^r-\ov{\beta}_3^\ell)
+\i (x_0^\ell-x_0^r)(\alpha_1^r\ov{\beta_1^\ell}+\ov{\beta}_2^\ell-\alpha_1^\ell\ov{\beta}_1^r-\ov{\beta}_2^r)\pi_1\Bigg.\\
&+(x_0^\ell-x_0^r)^2(\ov{\beta}_1^\ell-\ov{\beta}_1^r)\pi_1^2+\i(x_0^\ell-x_0^r)(\ov{\beta}_1^\ell-\ov{\beta}_1^r)\pi_2
\Bigg)
+O(z^{-4}).
\end{split}
\ee
By \eqref{beta_rec} and \eqref{beta_rec2}  we have $\beta_1^\ell=\beta_1^r=\frac{u}{2\i}$  and 
\bse\label{expansions}
\begin{align}
&\beta_2^\ell-\beta_2^r=\frac{\i}{2} \pa_x( \beta^\ell_{1} - \beta^r_{1})+ \beta^\ell_1(\ov\alpha_{1}^\ell-\ov\alpha_{1}^r),\\
&\beta_3^\ell-\beta_3^r=\frac{\i}{2}\partial_x(\beta_2^\ell-\beta_2^r)+\beta_1^\ell(\ov \alpha_2^\ell-\ov \alpha_2^r)-\frac{1}{2}\beta_1^\ell(\pi_1^\ell-\pi_1^r)=(\beta_2^\ell-\beta_1^\ell\ov\alpha_1^\ell)(\ov\alpha_1^\ell-\alpha_1^r)+\beta_1^\ell(\ov\alpha_2^\ell-\ov\alpha_2^r).
\end{align}
\ese
Plugging \eqref{expansions} into \eqref{expansionb} we conclude 
\begin{align}\label{b-asym}
b(z)\e^{\i(x_0^\ell-x_0^r)z}=\mathcal{O}(z^{-4}),\qquad z\to \infty\,.
\end{align}

\subsection{Behavior of Cauchy transform near endpoints of integration for a continuous density}

The goal of this section is to prove the following result: 
\begin{proposition}
Given a finite length, Lipschitz, oriented curve $\Gamma$ and $f \in C(\Gamma,\C)$ define
\[
    F(z) = \frac{1}{2\pi \mathrm{i}}\int_\Gamma \frac{ f(\zeta)}{\zeta-z} \d s, \quad z \in \C \setminus \Gamma    
\]
Let $z_0$ denote an endpoint of \, $\Gamma$, and suppose $\Gamma$ has a well defined tangent at $z_0$. 
Then as $z \to z_0$ non-tangentially
\[
    F(z) = \pm\frac{ f(z_0)}{2\pi\mathrm{i}} \log(z-z_0) \left[1  + o(1) \right],  \quad z \to z_0.
\]
Here, $\log(z-z_0)$ is branched along $\Gamma$, and one takes the $+$ sign (resp. $-$ sign) if $z_0$ is the terminal point (resp. initial point) of~~$\Gamma$.
\end{proposition}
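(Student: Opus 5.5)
The plan is to split the density as $f(\zeta)=f(z_0)+(f(\zeta)-f(z_0))$, which decomposes $F$ into
\[
F(z)=\frac{f(z_0)}{2\pi\mathrm{i}}\int_\Gamma\frac{\d\zeta}{\zeta-z}+\frac{1}{2\pi\mathrm{i}}\int_\Gamma\frac{f(\zeta)-f(z_0)}{\zeta-z}\,\d\zeta=:F_0(z)+F_1(z).
\]
(I read the $\d s$ in the statement as $\d\zeta$.) Each piece is then handled separately.

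The first piece is explicit. If $\Gamma$ runs from an initial point $a$ to a terminal point $b$, then
\[
\int_\Gamma\frac{\d\zeta}{\zeta-z}=\log(b-z)-\log(a-z),
\]
with the logarithm continued along $\Gamma$. Equivalently this is $\log(z-b)-\log(z-a)$ up to an additive constant, with the branch cut along $\Gamma$ itself. Take $z_0=b$. The term $\log(z-a)$ stays bounded as $z\to z_0$, so $F_0(z)=\frac{f(z_0)}{2\pi\mathrm{i}}\log(z-z_0)+\mathcal{O}(1)$. Take instead $z_0=a$: the singular term comes with the opposite sign, which accounts for the $\pm$ in the statement. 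Because $\Gamma$ has a tangent at $z_0$ and $z$ approaches non-tangentially, the argument of $z-z_0$ stays in a bounded range on the chosen sheet. Hence the $\mathcal{O}(1)$ term is really $o(\log|z-z_0|)$.

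The second piece is the main step: I will show $F_1(z)=o(\log|z-z_0|)$. Fix $\epsilon>0$ and choose $\delta$ small enough that $|f(\zeta)-f(z_0)|<\epsilon$ on $\Gamma_\delta=\Gamma\cap B_\delta(z_0)$. On $\Gamma\setminus\Gamma_\delta$ the kernel is bounded uniformly once $|z-z_0|<\delta/2$, so that part contributes $C_\delta$. On $\Gamma_\delta$ the contribution is at most $\epsilon\int_{\Gamma_\delta}|\d\zeta|/|\zeta-z|$.

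The non-tangential hypothesis and the tangent at $z_0$ give a cone inequality $|\zeta-z|\ge c(|\zeta-z_0|+|z-z_0|)$ for $\zeta\in\Gamma_\delta$. Since $\Gamma$ is Lipschitz (chord-arc), arclength on $\Gamma_\delta$ is comparable to $|\zeta-z_0|$. Together these yield
\[
\int_{\Gamma_\delta}\frac{|\d\zeta|}{|\zeta-z|}\le C\int_0^\delta\frac{\d r}{r+|z-z_0|}\le C\bigl(\log\delta-\log|z-z_0|\bigr),
\]
so that $\limsup|F_1|/|\log|z-z_0||\le C\epsilon$. Letting $\epsilon\to0$ gives the claim.

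The main obstacle is justifying the cone inequality uniformly. I will take $\delta$ small enough that $\Gamma_\delta$ lies in a narrow sector about the tangent direction at $z_0$, while $z$ stays in a sector separated from it by a fixed angle. Elementary plane geometry, namely the law of cosines with the angle bounded away from $0$, then gives the inequality. If $f(z_0)=0$ the statement reads $F(z)=o(\log|z-z_0|)$, and the argument above covers that case as well.
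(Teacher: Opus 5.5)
Your proposal is correct and follows essentially the same route as the paper: subtract $f(z_0)$ to isolate the explicit term $\frac{f(z_0)}{2\pi\mathrm{i}}\log_\Gamma\bigl(\frac{z-z_T}{z-z_I}\bigr)$, bound the part of the remainder away from $z_0$ by a constant, and bound the part in $B_\delta(z_0)$ by $\epsilon$ times a logarithmically growing integral, using the angular separation from the non-tangential approach and the law of cosines. You also state explicitly the chord-arc comparison of arclength with $|\zeta-z_0|$, which the paper uses only implicitly in its bound $K\epsilon\log\bigl(1/(\sin(\delta\theta_0)|z-z_0|)\bigr)$ and which is genuinely needed there.
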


This result is weaker than the classical results for singular integrals \cite{Gakhov, Musk} that assume that the density $f$ is H\"older continuous. In our search of the literature, we could not find results that assume only continuity of the density. 

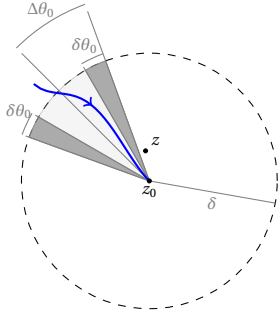
\begin{figure}[ht]
\centering
\begin{tikzpicture}[
	dot/.style={circle, fill=black, minimum size=2.5pt, inner sep=0pt}
	]
	\draw[dashed, thin] (0,0) circle [radius= 1.7];
	\draw[thin, gray, fill=gray!20, opacity=0.4] (0,0) -- ++ (110:1.7) arc (110:160:1.7)-- cycle;
	\draw[thin, gray, fill=gray!80, opacity=0.8] (0,0) -- ++ (110:1.7) arc (110:120:1.7)-- cycle;
	\draw[thin, gray, fill=gray!80, opacity=0.8] (0,0) -- ++ (160:1.7) arc (160:150:1.7)-- cycle;
	\draw[thin, gray] (0,0) (135: 2.5) arc (135:110:2.5) node[pos=0.5, left=0.1, above, scale=0.6] {$ \Delta \theta_0$};
	\draw[thin, gray] (0,0) (110: 1.8) arc (110:120:1.8) node[pos=0.5, left=0.1, above, scale=0.6] {$ \delta \theta_0$};
	\draw[thin, gray] (0,0) (160: 1.8) arc (160:150:1.8) node[pos=0.5, left=0.1, above, scale=0.6] {$ \delta \theta_0$};	
	\draw[thin, gray] (135:2.4) -- (0,0) -- (110:2.4);
	\draw[thin, gray] (120: 1.75) -- (0,0);
	\draw[thin, gray] (160: 1.75) -- (0,0) -- (150: 1.75);
	\draw[thin, gray] (0,0) -- (-10:1.7) node[pos=0.5, below, scale=0.6] {$\delta$};
	\node[above right, scale=0.6] at (-0.05, 0.4) {$z$};
	\node[below, scale=0.6] at (0, 0) {$z_0$};
	\begin{axis}[
	 	anchor = origin,
		x=1cm, y=1cm,
		hide axis,
       		xmin=-2, xmax=2, % displayed x range
        		ymin=-2, ymax=2,  % displayed y range
    	]
        \addplot[blue, thick, domain =-1.3:0][->-=0.4] ( {\x+0.12*\x^2+.2*\x^3}, {-1*\x +1.5*\x^2 + 1.4*\x^3 -0.05*\x^9 } ); 
	 \addplot [ color = black, mark=*, mark size=0.75pt, only marks] coordinates {  (-0.05, 0.4)  (0,0) };
    \end{axis}
\end{tikzpicture}
\caption{The local neighborhood of the endpoint $z_0$ of the curve $\Gamma$ (shown in blue) is chosens so that a nonzero angle $\delta \theta_0$ separates the curve $\Gamma$ from the non-tangential path along which $z \to z_0$.
}
\label{fig:cauchy.prop}
\end{figure}
\begin{proof}
We start by observing that for a finite length contour 
\[
    F(z) = 
    \frac{ f(z_0)}{2\pi \i} \log_\Gamma \left( \frac{ z- z_T}{z-z_I} \right) 
    + \frac{1}{2\pi \i}\int_\Gamma \frac{ f(\zeta)-f(z_0)}{s-z} \d \zeta,
\]
where $z_I$ and $z_T$ denote the initial and terminal points of $\Gamma$ and $\log_\Gamma \left( \frac{ z- z_T}{z-z_I} \right)$ has its branch cut along $\Gamma$. The result is proved if we show that the remaining integral is $o(\,\log(|z-z_0|^{-1})\,)$.

Fix $\epsilon > 0$. Suppose that $z \to z_0$ making an angle greater than $\Delta \theta_0$ with the cone. As $\Gamma$ is Lipschitz and $f$ is continuous we can find a disk $B_\delta(z_0)$ such that: (a) $\sup_{z\in\Gamma \cap B_\delta(z_0)} | f(z) - f(z_0)| \leq \epsilon$; (b) there exists an angle $\delta \theta_0 \in (0, \Delta \theta_0)$ such that $\inf_{s \in \Gamma \cap B_\delta(z_0)} \arg\left( \frac{z- z_0}{s-z_0} \right) \geq \delta \theta_0$. See Figure~\ref{fig:cauchy.prop}. 

We then separate the remaining integral into two parts 
\[
  \frac{1}{2\pi \i}\int_\Gamma \frac{ f(\zeta)-f(z_0)}{\zeta-z} \d \zeta =
  \frac{1}{2\pi \i}\int_{\Gamma \setminus B_\delta(z_0)}  \frac{ f(\zeta)-f(z_0)}{\zeta-z} \d \zeta +
  \frac{1}{2\pi \i}\int_{\Gamma \cap B_\delta(z_0)}  \frac{ f(\zeta)-f(z_0)}{\zeta-z} \d \zeta.
\]
The first integral is bounded independent of the distance $|z-z_0|$ since the contour remains a fixed distance from $z_0$. For the second integral, our choice of $B_\delta(z_0)$ gives
\begin{multline*}
\left| \int_{\Gamma \cap B_\delta(z_0)}  \frac{ f(\zeta)-f(z_0)}{\zeta-z} \d \zeta \right| \leq  
\epsilon \int_{\Gamma \cap B_\delta(z_0)} \frac{|\d\zeta|}{|\zeta-z|} \\
\leq
\epsilon \int_{\Gamma \cap B_\delta(z_0)} \frac{|\d\zeta|}{\sqrt{(|\zeta-z_0|-|z-z_0| \cos(\delta\theta_0) )^2 + \sin(\delta\theta_0)^2|z-z_0|^2}} 
\leq K \epsilon \log\left( \frac{1}{\sin(\delta\theta_0) |z-z_0|} \right).
\end{multline*}
Since such an estimate holds for every $\epsilon >0$, it follows that 
\[
    \lim_{z \to z_0} \frac{1}{\log(|z-z_0|^{-1})}\left|\int_\Gamma \frac{f(\zeta)-f(z_0)}{\zeta- z} \d \zeta \right| = 0,
\]
which completes the proof. 
\end{proof}

\subsection{An example of initial data with pure step elliptic functions }
Fix $0<\eta_1^\ell <\eta_2^\ell$ and $0<\eta_1^r<\eta_2^r$. Consider the pure step initial datum
\begin{equation}\label{e:stepic}
	u_0(x)=
	\begin{cases}
		u^\ell_0(x), & x\le 0,\\
		u^r_0(x), & x>0,
	\end{cases}
\end{equation}
where $u^\ell_0(x)$ and $u^r_0(x)$ are the genus-$1$ finite-gap backgrounds with branch points
$\pm \i\,\eta_1^\ell,\pm \i\,\eta_2^\ell$ and $\pm \i\,\eta_1^r,\pm \i\,\eta_2^r$, respectively. In the present setting, $E=0$ and $N=0$. We also take $x_0=0$ for simplicity.

Choose the branch cut exactly as in Figure~\ref{fig:cuts}:
\begin{equation}\label{eq:gamma-def}
	\Sigma^\ell := [\i\,\eta_1^\ell,\i\,\eta_2^\ell]\ \cup\ [-\i\,\eta_2^\ell, -\i\,\eta_1^\ell],\qquad
	\Sigma^r := [\i\,\eta_1^r,\i\,\eta_2^r]\ \cup\ [-\i\,\eta_2^r,-\i\,\eta_1^r].
\end{equation}
Define the scalar fourth roots:
\begin{equation}
	\gamma^r(z):=\left(\frac{(z-\i\,\eta_2^r)(z+\i\,\eta_1^r)}{(z+\i\,\eta_2^r)(z-\i\,\eta_1^r)}\right)^{1/4},\qquad
	\gamma^\ell(z):=\left(\frac{(z-\i\,\eta_2^\ell)(z+\i\,\eta_1^\ell)}{(z+\i\,\eta_2^\ell)(z-\i\,\eta_1^\ell)}\right)^{1/4},
\end{equation}
with branches chosen so that $\gamma^r(z)\to 1$ and $\gamma^\ell(z)\to 1$ as $z\to\infty$ and with branch cuts along
$\Sigma^r$ and $\Sigma^\ell$, respectively. With this normalization their boundary values satisfy the constant jump
\begin{equation}
	\gamma^s(z_+)=\i\,\gamma^s(z_-),\qquad z\in\Sigma^s,\quad s\in\{\ell,r\}.
\end{equation}
Let $O^s(x;z)$ be defined as in \eqref{defO}, with $E=0$ and $N=0$.
Denote by $W_0^s(x;z)$ the background matrix solution of the Lax pair for $u^s_0(x)$, and write
\begin{equation}
	W_0^s(x;z)=O^s(x;z)\,\e^{-\i x p^s(z)\sigma_3}.
\end{equation}
Let $W^s(x;z)$ be the matrix solutions defined in \eqref{e:Wpmasy}. In the present pure-step case, the potential equals the background for $x\leq 0$ and $x\geq0$, hence
\[
W^\ell(x;z)=W_0^\ell(x;z),\quad x\le 0,\qquad\qquad
W^r(x;z)=W_0^r(x;z),\quad x\ge 0.
\]
Continuity at $x=0$ yields the scattering relation
\begin{equation}
	W^\ell(0;z)=W^r(0;z)\,S(z),
\end{equation}
hence
\begin{equation}
	S(z)=\Big(W^r_0(0;z) \Big)^{-1} W_0^\ell(0;z)
	=\Big(O^r(0;z)\Big)^{-1}
	O^\ell(0;z).
\end{equation}
In this case, $S(z)$ admits the explicit closed form
\[
S(z)=\frac12
\begin{pmatrix}
	\lambda(z)+\lambda(z)^{-1} & \lambda(z)-\lambda(z)^{-1}\\
	\lambda(z)-\lambda(z)^{-1} & \lambda(z)+\lambda(z)^{-1}
\end{pmatrix},
\qquad
\lambda(z)=\frac{\gamma^\ell(z)}{\gamma^r(z)}=\left(
\frac{(z-\i\eta_2^\ell)(z+\i\eta_1^\ell)(z+\i\eta_2^r)(z-\i\eta_1^r)}
{(z+\i\eta_2^\ell)(z-\i\eta_1^\ell)(z-\i\eta_2^r)(z+\i\eta_1^r)}
\right)^{1/4}.
\]
It follows that 
\[
a(z)=\dfrac{1}{2}(\lambda(z)+\lambda(z)^{-1}),\quad b(z)=\dfrac{1}{2}(\lambda(z)-\lambda(z)^{-1})
\]
so that both $a(z)$ and $b(z) $  have an analytic extension to $\C\setminus\{\Sigma^\ell\cup\Sigma^r\}$.  It follows that  $b_1(z_\pm)$ are  the boundary values of $b(z)$ for $z\in\Sigma_1^\ell$  and $b_2(z_\pm)$ are  the boundary values of $b(z)$ for $z\in\Sigma_1^r$.
% On $z\in\R$, this coincides with the standard scattering matrix normalization $S=\begin{psmallmatrix}a&-b^*\\ b&a^*\end{psmallmatrix}$. On the bands one uses the boundary values $z_\pm\in\Sigma_1^r \cap \Sigma_1^\ell$ and the corresponding coefficients $a(z_\pm)$, $b_1(z_\pm)$, $b_2(z_\pm)$ in the sense of \eqref{e:S}.

The explicit formula above allows one to track the boundary values of $b_1(z)$ across the bands. In particular, by comparing the $+$ and $-$ boundary values on $\Sigma^\ell$ and $\Sigma^r$, on obtains the following local jump relations near the endpoints.
\begin{proposition}
	Let $\mathbb{D}(\cdot)$ denote the small disk, then the scattering data $b_1(z)$ admits the following jump conditions near each endpoint:
	\begin{equation}
		\begin{alignedat}{4}
			\frac{b_1(z_+)}{b_1(z_-)}&=-\mathrm{i},
			&\quad & z\in\mathbb{D}(\mathrm{i}\,\eta_2^r)\cap(\Sigma_1^r\setminus\Sigma_1^\ell),
			&\qquad
			\frac{b_1(z_+)}{b_1(z_-)}&= \mathrm{i},
			&\quad & z\in\mathbb{D}(\mathrm{i}\,\eta_2^\ell)\cap(\Sigma_1^r\setminus\Sigma_1^\ell),\\[0.8ex]
			\frac{b_1(z_+)}{b_1(z_-)}&= 1,
			& & z\in\mathbb{D}(\mathrm{i}\,\eta_2^\ell)\cap(\Sigma_1^r\cap\Sigma_1^\ell),
			&
			\frac{b_1(z_+)}{b_1(z_-)}&= 1,
			& & z\in\mathbb{D}(\mathrm{i}\,\eta_1^r)\cap(\Sigma_1^r\cap\Sigma_1^\ell),\\[0.8ex]
			\frac{b_1(z_+)}{b_1(z_-)}&=-\mathrm{i},
			& & z\in\mathbb{D}(\mathrm{i}\,\eta_1^r)\cap(\Sigma_1^\ell\setminus\Sigma_1^r),
			&
			\frac{b_1(z_+)}{b_1(z_-)}&= \mathrm{i},
			& & z\in\mathbb{D}(\mathrm{i}\,\eta_1^\ell)\cap(\Sigma_1^\ell\setminus\Sigma_1^r).
		\end{alignedat}
	\end{equation}
\end{proposition}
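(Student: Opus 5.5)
The plan is to work entirely from the closed form $b(z)=\tfrac12(\lambda-\lambda^{-1})$ with $\lambda=\gamma^\ell/\gamma^r$, established just above. No further information about the Jost functions is needed. I fix the configuration of Figure~\ref{fig:cuts}(a), with $\Sigma_1^r$ lying above $\Sigma_1^\ell$, so that $\eta_1^\ell<\eta_1^r<\eta_2^\ell<\eta_2^r$. Then:
\begin{itemize}
\item $\Sigma_1^r\setminus\Sigma_1^\ell=[\mathrm{i}\eta_2^\ell,\mathrm{i}\eta_2^r]$;
\item $\Sigma_1^r\cap\Sigma_1^\ell=[\mathrm{i}\eta_1^r,\mathrm{i}\eta_2^\ell]$;
\item $\Sigma_1^\ell\setminus\Sigma_1^r=[\mathrm{i}\eta_1^\ell,\mathrm{i}\eta_1^r]$.
\end{itemize}
The other configurations are handled by swapping the roles of $\ell$ and $r$.

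\emph{Step 1: boundary values of $\lambda$ on each piece.} Each $\gamma^s$ jumps by the factor $\mathrm{i}$ exactly on $\Sigma^s$ and is continuous elsewhere. On $\Sigma_1^r\setminus\Sigma_1^\ell$ only $\gamma^r$ jumps, so $\lambda_+=-\mathrm{i}\lambda_-$. On $\Sigma_1^\ell\setminus\Sigma_1^r$ only $\gamma^\ell$ jumps, so $\lambda_+=\mathrm{i}\lambda_-$. On the overlap both jump and the factors cancel, so $\lambda_+=\lambda_-$. It follows that $b$ has no jump on the overlap, which gives the two entries equal to $1$ directly.

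\emph{Step 2: local size of $\lambda$ at each endpoint.} I will read this off from the explicit fourth root:
\begin{itemize}
\item near $\mathrm{i}\eta_2^r$ the factor $\gamma^r$ vanishes like $(z-\mathrm{i}\eta_2^r)^{1/4}$, so $\lambda\to\infty$;
\item near $\mathrm{i}\eta_2^\ell$ the factor $\gamma^\ell\to0$, so $\lambda\to0$;
\item near $\mathrm{i}\eta_1^r$ the factor $\gamma^r\to\infty$, so $\lambda\to0$;
\item near $\mathrm{i}\eta_1^\ell$ the factor $\gamma^\ell\to\infty$, so $\lambda\to\infty$.
\end{itemize}
Where $\lambda\to\infty$ we have $b=\tfrac12\lambda(1+\mathcal{O}(\lambda^{-2}))$, hence $b_+/b_-\to\lambda_+/\lambda_-$. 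Where $\lambda\to0$ we have $b=-\tfrac12\lambda^{-1}(1+\mathcal{O}(\lambda^{2}))$, hence $b_+/b_-\to\lambda_-/\lambda_+$.

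\emph{Step 3: combine.} Combining Steps 1 and 2 gives the four nontrivial entries:
\begin{itemize}
\item near $\mathrm{i}\eta_2^r$: $\lambda_+/\lambda_-=-\mathrm{i}$;
\item near $\mathrm{i}\eta_2^\ell$ on $\Sigma_1^r\setminus\Sigma_1^\ell$: $\lambda_-/\lambda_+=\mathrm{i}$;
\item near $\mathrm{i}\eta_1^r$ on $\Sigma_1^\ell\setminus\Sigma_1^r$: $\lambda_-/\lambda_+=-\mathrm{i}$;
\item near $\mathrm{i}\eta_1^\ell$: $\lambda_+/\lambda_-=\mathrm{i}$.
\end{itemize}

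\emph{Main obstacle: in what sense the equalities hold.} Off the overlap, $b_+/b_-$ is not literally constant. For instance, with $\lambda_+=-\mathrm{i}\lambda_-$,
\[
b_+=-\tfrac{\mathrm{i}}{2}\bigl(\lambda_-+\lambda_-^{-1}\bigr),
\]
which is not a constant multiple of $b_-$. So the relations must be read as leading-order statements: $b_+/b_-\to$ the stated constant as $z$ tends to the endpoint, with error $\mathcal{O}(|z-z_j|^{1/2})$ coming from the $\lambda^{\pm2}$ corrections. This is exactly the form used in the proof of Proposition~\ref{p:hep}, cf.\ \eqref{e:b2jump}, \eqref{e:b1jump1} and \eqref{e:b1jump2}. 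I would state the proposition in that limiting sense.

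The remaining care is bookkeeping of the orientation and branch conventions, so that ``$+$'' agrees with the side on which $\gamma_+=\mathrm{i}\gamma_-$ holds. I would check this consistency against \eqref{e:b1jump2}, where the overlap-versus-nonoverlap ratio of the limits is $-\mathrm{i}$, matching $1$ versus $\mathrm{i}$ at $\mathrm{i}\eta_2^\ell$ up to the stated sign convention.
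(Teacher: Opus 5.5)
Your proposal is correct and follows the paper's route. The paper likewise only compares the $\pm$ boundary values of the closed form $b=\frac{1}{2}(\lambda-\lambda^{-1})$, $\lambda=\gamma^\ell/\gamma^r$, and your Steps 1--3 carry out that comparison explicitly. Your caveat is also right and worth keeping: off the overlap one has exactly $b_+=-\mathrm{i}\,a_-$ on $\Sigma_1^r\setminus\Sigma_1^\ell$ and $b_+=\mathrm{i}\,a_-$ on $\Sigma_1^\ell\setminus\Sigma_1^r$, in agreement with \eqref{a+symmetry}. So the four nontrivial ratios hold only as endpoint limits with $\mathcal{O}(|z-z_j|^{1/2})$ error, which is precisely how they enter \eqref{e:b2jump}, \eqref{e:b1jump1} and \eqref{e:b1jump2}.
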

From the explicit expression for $\lambda(z)$, and hence for $a(z)$, $b_1(z)$ and $b_2(z)$, the local behavior of the auxiliary functions $h(z)$, $a_1(z)$, $a_2(z)$ and of $r_1(z)$, $r_2(z)$ near the endpoints follows directly from their definitions.
\begin{proposition}
	The meromorphic function $h(z)$ has algebraic singularities at the endpoints,
	with the following local behaviour:
	\begin{equation}
		\begin{aligned}
			h(z) &= \mathcal{O}\!\left((z - \mathrm{i}\,\eta_2^r)^{\frac14}\right),
			&\quad && z \to \mathrm{i}\,\eta_2^r,
			&\qquad
			&h(z)= \mathcal{O}\!\left((z - \mathrm{i}\,\eta_2^\ell)^{-\frac14}\right),
			&\quad && z \to \mathrm{i}\,\eta_2^\ell, \\[0.6ex]
			h(z)&= \mathcal{O}\!\left((z - \mathrm{i}\,\eta_1^r)^{\frac14}\right),
			&\quad && z \to \mathrm{i}\,\eta_1^r,
			&\qquad
			&h(z)= \mathcal{O}\!\left((z - \mathrm{i}\,\eta_1^\ell)^{-\frac14}\right),
			&\quad && z \to \mathrm{i}\,\eta_1^\ell .
		\end{aligned}
	\end{equation}
\end{proposition}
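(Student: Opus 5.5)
Since $h$ is, away from the isolated endpoints, given by the explicit Cauchy-type formula \eqref{e:defh}, I will read off its local behaviour at each endpoint from the jump data of the explicit example, combined with the endpoint result for Cauchy transforms with continuous density proved just above. Near an endpoint $z_0$, only the Cauchy integrals whose contours terminate or originate at $z_0$ can be singular. All the others are analytic in a neighbourhood of $z_0$: the integral over $\R$ is analytic near $z_0\notin\R$, and the integrals over the lower arcs are analytic in $\C^+$. Hence $\log h(z)$ equals a locally bounded function plus the sum of the endpoint contributions $\pm\frac{f(z_0)}{2\pi\i}\log(z-z_0)(1+o(1))$. Here $f$ is the relevant density $\log(-b_1(s_+)/b_1(s_-))$ or $\log(-b_2(s_-)/b_2(s_+))$, the sign is $+$ at a terminal point and $-$ at an initial point, and the continuity of $f$ up to $z_0$ comes from the fact that the boundary values of $\lambda$ are explicit quartic roots.

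The concrete step is to evaluate $f(z_0)$ using the preceding proposition on the jumps of $b_1$ (and, by the same explicit formula $b=\frac12(\lambda-\lambda^{-1})$, of $b_2$). At a point such as $\i\eta_2^\ell$ lying in the overlap configuration of Figure~\ref{fig:cuts}(a), two contours meet: on $\Sigma_1^r\setminus\Sigma_1^\ell$ the ratio is $\i$, so $f=\log(-\i)=-\i\pi/2$, while on $\Sigma_1^r\cap\Sigma_1^\ell$ the ratio is $1$, so $f=\log(-1)=\i\pi$ with the principal branch. Because $z_0$ is the terminal point of one arc and the initial point of the other, the $\i\pi$ contributions cancel up to an integer multiple of $2\pi\i$ in the exponent, which is harmless after exponentiation, exactly as in Case 2 of Proposition~\ref{p:hep}. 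The net coefficient is $\mp\frac14$, giving $h=\mathcal{O}((z-\i\eta_2^\ell)^{-1/4})$. At the $r$-endpoints the jump ratio is $-\i$ instead of $\i$, which flips the sign and gives the exponent $+\frac14$. The endpoints $\i\eta_1^{r}$ and $\i\eta_1^\ell$ are handled by the same bookkeeping with the orientation reversed, and the lower half-plane follows from $h^*=h$.

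The main obstacle will be the bookkeeping of branches and orientations. The value of $\log(-\text{ratio})$, in particular whether it equals $\pm\i\pi/2$ or $\pm 3\i\pi/2$, must be fixed consistently with the branch used in \eqref{e:defh}, and the induced multiplier must be checked to be a genuine quartic power rather than its inverse. To do this I will first verify the sign directly in Case 1 of Proposition~\ref{p:hep} (the isolated endpoint, ratio $\i$, exponent $-\frac14$) and then use the continuity of the combined density $f$ across the junction point to propagate the sign to the other endpoints. As an independent check, I will compare with $a_1=(ah)^{1/2}$ and $a_2=(a/h)^{1/2}$. Using $a=\frac12(\lambda+\lambda^{-1})$ and the exponents of $\lambda$, Proposition~\ref{propa12} requires $a_2$ to be regular at the $\ell$-endpoints and $a_1$ to be regular at the $r$-endpoints, and this forces precisely the stated exponents.
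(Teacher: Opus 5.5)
Your route is the one the paper intends: a local analysis at each endpoint of the Cauchy integrals in \eqref{e:defh}, using the limiting jump ratios of the preceding proposition, as in Cases 1--2 of Proposition~\ref{p:hep}. You rightly flag branch bookkeeping as the main obstacle, but then you dismiss it, and that is the step that actually fixes the exponents. The jump data determine the exponent of $h$ at an endpoint only modulo $\mathbb{Z}$. An endpoint value $f(z_0)$ enters $\log h$ as $\pm\frac{f(z_0)}{2\pi\mathrm{i}}\log(z-z_0)$, so replacing $f$ by $f+2\pi\mathrm{i}k$ multiplies $h$ by $(z-z_0)^{\pm k}$. This is not ``harmless after exponentiation''; it is exactly the difference between $(z-z_0)^{-1/4}$ and $(z-z_0)^{3/4}$.

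With the choices you write down at $\mathrm{i}\eta_2^\ell$, the coefficient comes out wrong. There the overlap density is $\log(-1)=\mathrm{i}\pi$ with $\mathrm{i}\eta_2^\ell$ as terminal point, and the density on $\Sigma_1^r\setminus\Sigma_1^\ell$ tends to $-\mathrm{i}\pi/2$ with $\mathrm{i}\eta_2^\ell$ as initial point. The coefficient is then $(\mathrm{i}\pi+\mathrm{i}\pi/2)/(2\pi\mathrm{i})=\frac34$, not $-\frac14$. The same principal choice on the overlap gives $-\frac14-\frac12=-\frac34$ at $\mathrm{i}\eta_1^r$, which contradicts the claimed bound $\mathcal{O}((z-\mathrm{i}\eta_1^r)^{1/4})$. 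With principal logarithms on the two non-overlapping arcs, the statement holds only if the constant overlap density is $-\mathrm{i}\pi$, the branch that makes the combined density of Case~2 continuous. Formula \eqref{e:defh} leaves this unspecified, so you must adopt it as a convention. Your comparison with Proposition~\ref{propa12} can identify the right branch but cannot replace the argument, since item~4 there is itself deduced from the endpoint behaviour of $h$.

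You also need the change of argument of each density along the non-overlapping arcs, which your ``propagation'' step skips. At $\mathrm{i}\eta_1^r$, the rule ``ratio $-\mathrm{i}$ instead of $\mathrm{i}$ flips the sign'' is not the right bookkeeping, for two reasons. The density on $\Sigma_1^\ell\setminus\Sigma_1^r$ is built from $b_2(s_-)/b_2(s_+)$ with $b_2=-b$, the reciprocal of the listed ratios. And $\mathrm{i}\eta_1^r$ also receives the overlap contribution.

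Concretely, track $\arg\lambda^4$ along the right side of the imaginary axis. This gives $\lambda_-^2\in-\mathrm{i}\R_+$ on $\Sigma_1^\ell\setminus\Sigma_1^r$ and $\lambda_-^2\in\mathrm{i}\R_+$ on $\Sigma_1^r\setminus\Sigma_1^\ell$. On both arcs the quantity inside the logarithm is $-\mathrm{i}\,\mathrm{e}^{2\mathrm{i}\arctan|\lambda_-|^2}$, which stays in the closed right half-plane. So the principal logarithm runs continuously from $\mathrm{i}\pi/2$ to $-\mathrm{i}\pi/2$ on $[\mathrm{i}\eta_1^\ell,\mathrm{i}\eta_1^r]$, and from $-\mathrm{i}\pi/2$ to $\mathrm{i}\pi/2$ on $[\mathrm{i}\eta_2^\ell,\mathrm{i}\eta_2^r]$. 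Together with the overlap value $-\mathrm{i}\pi$, the exponents are $-\frac14$ at $\mathrm{i}\eta_1^\ell$, $-\frac14+\frac12$ at $\mathrm{i}\eta_1^r$, $-\frac12+\frac14$ at $\mathrm{i}\eta_2^\ell$, and $\frac14$ at $\mathrm{i}\eta_2^r$.

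Finally, the appendix lemma for merely continuous densities is not strong enough for the $\mathcal{O}$-bounds. It gives only a remainder $o(\log|z-z_0|)$, i.e.\ $|h|=|z-z_0|^{\pm1/4+o(1)}$, and only for non-tangential approach. Here the densities differ from their endpoint values by $\mathcal{O}(|s-z_0|^{1/2})$, so you should invoke the classical H\"older endpoint theorem, which gives a bounded remainder.
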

\begin{proposition}
	$a_1(z)$ has at worst quartic root singularities at $\mathrm{i}\,\eta_1^\ell$ and $\mathrm{i}\,\eta_2^\ell$, and $a_2(z)$ has at worst quartic root singularities at $\mathrm{i}\,\eta_1^r$ and $\mathrm{i}\,\eta_2^r$.
\end{proposition}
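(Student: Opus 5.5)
The plan is to read off the local behaviour directly from the definitions \eqref{e:defa12}, namely $a_1=(ah)^{1/2}\e^{\frac{\i}{2}(x_0^\ell-x_0^r)z}$ and $a_2=(a/h)^{1/2}\e^{-\frac{\i}{2}(x_0^\ell-x_0^r)z}$. In the present example $x_0^\ell=x_0^r=0$, so the exponential factors are identically $1$ and play no role. The exponents of $a$ come from the closed form $a=\tfrac12(\lambda+\lambda^{-1})$ with $\lambda=\gamma^\ell/\gamma^r$, and the exponents of $h$ from the preceding proposition. Since $a_1$ and $a_2$ are square roots, it suffices to show two things near each endpoint: $ah$ and $a/h$ behave like $(z-z_*)^{\kappa}\,g(z)$ with $g$ bounded and nonvanishing, and the exponents satisfy $\kappa/2\in\{0,-\tfrac14\}$ at the relevant points.

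\textbf{Step 1: exponents of $a$.} Since $\gamma^\ell$ and $\gamma^r$ have disjoint branch points (endpoints do not coincide), near $\i\eta_2^\ell$ we have $\gamma^r$ analytic and nonzero and $\gamma^\ell\sim c\,(z-\i\eta_2^\ell)^{1/4}$. Hence $\lambda^{-1}$ dominates and $a\sim \tfrac{c'}{2}(z-\i\eta_2^\ell)^{-1/4}$. The same argument gives the other endpoints:
\begin{itemize}
\item near $\i\eta_1^\ell$: $\gamma^\ell\sim(z-\i\eta_1^\ell)^{-1/4}$, so $\lambda$ dominates and $a\sim(z-\i\eta_1^\ell)^{-1/4}$;
\item near $\i\eta_2^r$ and $\i\eta_1^r$: $\lambda\sim(\cdot)^{\mp1/4}$, so $a\sim(z-z_*)^{-1/4}$.
\end{itemize}
Thus $a$ has an exact $-\tfrac14$ exponent at all four upper endpoints, with a nonzero leading coefficient.

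\textbf{Step 2: combine with $h$.} From the preceding proposition on $h$, the exponent of $h$ is $-\tfrac14$ at $\i\eta_{1,2}^\ell$ and $+\tfrac14$ at $\i\eta_{1,2}^r$. This gives:
\begin{itemize}
\item at $\i\eta_j^\ell$: $ah\sim(z-\i\eta_j^\ell)^{-1/2}$ and $a/h\sim 1$, so $a_1\sim(z-\i\eta_j^\ell)^{-1/4}$ and $a_2$ is bounded;
\item at $\i\eta_j^r$: $ah\sim 1$ and $a/h\sim(z-\i\eta_j^r)^{-1/2}$, so $a_1$ is bounded and $a_2\sim(z-\i\eta_j^r)^{-1/4}$.
\end{itemize}
This is exactly the claim. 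The conjugate endpoints in $\C^-$ are not needed, since $a_1$ and $a_2$ are defined only in $\C^+$.

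\textbf{Step 3: the obstacle, and how to handle it.} The $O(\cdot)$ statements for $h$ are only upper bounds. To take square roots we need the sharper form $h=(z-z_*)^{\mp1/4}h_0$ with $h_0$ having a nonzero limit. For this I would reuse the Cauchy-integral argument of Proposition~\ref{p:hep}: the densities $\log(-b_j(s_\pm)/b_j(s_\mp))$ are piecewise constant near each endpoint by the preceding proposition, so the local factor is an explicit power times $\exp$ of a bounded analytic function, and hence $h_0$ is nonvanishing.

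The branch of the square roots also needs a single-valued definition on $\C^+\setminus\Sigma_1$. Here $h$ is zero-free by construction as an exponential. For $a$, nonvanishing amounts to $\lambda^2\neq-1$, which is the standing no-soliton assumption on $a$ (and holds near the endpoints by Step 1). Since $\C^+\setminus\Sigma_1$ is not simply connected once the cuts are removed, I would define the roots via $\tfrac12\log(ah)$, taking boundary values on the cuts. The local exponent computation is unaffected by which determination is chosen, so the bound ``at worst quartic root'' follows.
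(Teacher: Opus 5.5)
Your route is the paper's: read off the exponent of $a=\tfrac12(\lambda+\lambda^{-1})$ at each upper endpoint (exactly $-\tfrac14$ at all four), add the exponent of $h$, and halve, using $a_1=(ah)^{1/2}$ and $a_2=(a/h)^{1/2}$. You also correctly catch something the paper's one-line justification leaves implicit. Upper bounds on $h$ suffice for $a_1$, but $a_2=(a/h)^{1/2}$ needs a \emph{lower} bound, $|h(z)|\ge c\,|z-\mathrm{i}\eta_j^r|^{1/4}$, and the $\mathcal{O}$-statement for $h$ does not give one.

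The way you close this in Step 3 does not work as written, because its premise is false: the jump ratios are not piecewise constant near the endpoints. In this example $b_1$ and $b_2$ are, up to sign, boundary values of $b=\tfrac12(\lambda-\lambda^{-1})$. On $\Sigma_1^\ell\setminus\Sigma_1^r$ one has $\lambda_+=\mathrm{i}\lambda_-$, since $\gamma^\ell_+=\mathrm{i}\gamma^\ell_-$ while $\gamma^r$ is continuous there. Hence $b_+=\mathrm{i}a_-$ and
\[
\frac{b_+}{b_-}=\mathrm{i}\,\frac{\lambda_-^2+1}{\lambda_-^2-1}.
\]
On $\Sigma_1^r\setminus\Sigma_1^\ell$ the same computation with $\lambda_+=-\mathrm{i}\lambda_-$ gives $-\mathrm{i}(\lambda_-^2+1)/(\lambda_-^2-1)$. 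These ratios tend to $\pm\mathrm{i}$ at the endpoints, where $\lambda\to0$ or $\infty$, but they genuinely vary along the arcs. Only on $\Sigma_1^r\cap\Sigma_1^\ell$ is the ratio identically $1$. So the preceding proposition holds only as a statement about limits, and ``explicit power times the exponential of a bounded analytic function'' does not follow.

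Falling back on the argument of Proposition~\ref{p:hep} does not help either. For a merely continuous density the remainder is only $o(\log|z-z_*|)$, which does not rule out $h_0\to0$ or $h_0\to\infty$, so it gives no two-sided power bound.

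The missing ingredient is H\"older regularity of the density. At each endpoint $\lambda^{2}$ or $\lambda^{-2}$ behaves like $c\,(s-z_*)^{1/2}$, so the density equals its endpoint value plus $\mathcal{O}(|s-z_*|^{1/2})$. For H\"older densities the classical endpoint theory (Muskhelishvili, Gakhov) shows that $\frac{1}{2\pi\mathrm{i}}\int\frac{f(s)-f(z_*)}{s-z}\,\mathrm{d}s$ stays bounded near $z_*$, boundary values included. Hence $h=(z-z_*)^{\mp1/4}h_0$ with $|h_0|$ bounded above and below, and your Step 2 then goes through.

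If you rederive the exponents of $h$ this way instead of quoting them, take the constant density $\log(-1)$ on the overlap as $-\mathrm{i}\pi$. That is the value that makes the paper's function $f$ continuous. The principal value would shift the exponents at the ends of the overlap by an integer, giving for example $-\tfrac34$ instead of $\tfrac14$ at $\mathrm{i}\eta_1^r$ in configuration (a) of Figure~\ref{fig:cuts}.
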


With the definition of $r_1(z)$ and $r_2(z)$ in \eqref{e:r12}, we finally have their behaviors at the endpoints.
\begin{proposition}
	$r_1(z)$ has square root zeros at~~$\mathrm{i}\,\eta^r_1$ and $\mathrm{i}\,\eta^r_2$. $r_2(z)$ has square root zeros at $\mathrm{i}\,\eta^\ell_1$ and $\mathrm{i}\,\eta^\ell_2$.
\end{proposition}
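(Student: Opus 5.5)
The plan is to read $r_1$ and $r_2$ off the explicit formulas of the pure step example rather than work with the abstract estimates. In this example $E=0$ and $x_0=0$, so on the non-overlapping parts the simplified forms from the earlier proposition give
\[
r_1(z)=\frac{1}{2a_1(z_-)a_1(z_+)}\ \text{on } \Sigma_1^\ell\setminus\Sigma_1^r,\qquad r_2(z)=\frac{1}{2a_2(z_-)a_2(z_+)}\ \text{on } \Sigma_1^r\setminus\Sigma_1^\ell .
\]
On the overlap I will use the general formulas \eqref{e:defr1}--\eqref{e:defr2}. In both cases the local order of vanishing of $r_j$ at an endpoint is minus the sum of the local exponents of the boundary values of $a_1$ (respectively $a_2$) and of the common denominator $a_1(z_+)a_2(z_-)-\i b_2(z_-)$.

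\textbf{Step 1.} Compute the local exponents of $a=\tfrac12(\lambda+\lambda^{-1})$ and $b=\tfrac12(\lambda-\lambda^{-1})$. Since $\lambda=\gamma^\ell/\gamma^r$, near an endpoint of $\Sigma^\ell$ we have $\lambda\sim c\,(z-z_j^\ell)^{\pm1/4}$. Hence $a$ and $b$ each behave like $(z-z_j^\ell)^{-1/4}$, with nonvanishing constants, and the same holds with $\ell$ replaced by $r$.

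\textbf{Step 2.} Combine this with the exponents of $h$ from the previous proposition:
\[
h\sim (z-\i\eta_j^\ell)^{-1/4},\qquad h\sim (z-\i\eta_j^r)^{1/4}.
\]
Then
\[
a_1=(ah)^{1/2}\sim (z-\i\eta_j^\ell)^{-1/4},\qquad a_1\sim (z-\i\eta_j^r)^{0},
\]
\[
a_2=(a/h)^{1/2}\sim (z-\i\eta_j^r)^{-1/4},\qquad a_2\sim (z-\i\eta_j^\ell)^{0}.
\]
This is consistent with the proposition stating that $a_1$ and $a_2$ have quartic singularities only at their own endpoints.

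\textbf{Step 3.} Evaluate each $r_j$ at the endpoints it is defined at.
\begin{itemize}
\item At $\i\eta_1^\ell,\i\eta_2^\ell$ (on $\Sigma_1^\ell\setminus\Sigma_1^r$): the product $a_1(z_+)a_1(z_-)$ behaves like $(z-z_j^\ell)^{-1/2}$, so $r_1\sim (z-z_j^\ell)^{1/2}$, a square root zero.
\item At $\i\eta_1^r,\i\eta_2^r$ (on $\Sigma_1^r\setminus\Sigma_1^\ell$): similarly $r_2\sim (z-z_j^r)^{1/2}$.
\end{itemize}
The labelling in the statement pairs $r_1$ with $\i\eta_j^r$ and $r_2$ with $\i\eta_j^\ell$. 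I will reconcile this with the configuration of Figure~\ref{fig:cuts}: when an endpoint of one cut lies inside the other, the relevant coefficient is evaluated on the overlap. In that case I will use \eqref{e:defr1}--\eqref{e:defr2} and the jump relation $b_2(z_-)/(a_1(z_+)a_2(z_-))=b_1(z_-)/(a_1(z_-)a_2(z_+))$ from Proposition~\ref{propa12} to rewrite the denominator as $a_1(z_+)a_2(z_-)$ times a bounded nonzero factor. The local ratios $b_1(z_+)/b_1(z_-)=\pm\i$ or $1$ from the preceding proposition fix these factors.

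\textbf{Main obstacle.} The hard step is the overlap endpoints. There the denominator is a difference $a_1(z_+)a_2(z_-)-\i b_2(z_-)$, and I must rule out cancellation of its leading $(z-z_j)^{-1/4}$ terms. I will check this directly by expanding $\lambda$ to leading order and substituting the explicit constants. I expect the jump ratio $\i$ rather than $-\i$ to make the two terms add instead of cancel, so the denominator keeps exponent $-1/4$. The prefactor $a_2(z_-)/a_1(z_-)$ (respectively $a_1(z_+)/a_2(z_+)$) contributes a further $1/4$. Together these should yield exactly a square root zero.
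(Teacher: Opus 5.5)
Your Steps 1--3 are the right computation. The paper offers nothing more here; it only says the endpoint behaviour follows from \eqref{e:r12}. But what that computation proves is $r_1\sim(z-\mathrm{i}\eta_j^\ell)^{1/2}$ and $r_2\sim(z-\mathrm{i}\eta_j^r)^{1/2}$. This is the pairing in the Lemma stated right after \eqref{e:r12} and in the Remark in Section~\ref{inverse}. The appendix statement has $\ell$ and $r$ interchanged, as does the appendix sentence assigning $b_1,b_2$ to the arcs.

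The step that fails is your reconciliation with the literal labelling. Put $\beta:=b_2(z_-)/(a_1(z_+)a_2(z_-))=b_1(z_-)/(a_1(z_-)a_2(z_+))$, using the identity in Proposition~\ref{propa12} and taking whichever expression is defined on a given arc. With $E=x_0=0$ the definitions become
\[
r_1=\frac{1}{(1-\mathrm{i}\beta)\,a_1(z_+)\,a_1(z_-)},\qquad r_2=\frac{1}{(1-\mathrm{i}\beta)\,a_2(z_+)\,a_2(z_-)} .
\]
Take an $r$-endpoint lying on $\Sigma_1^\ell$, e.g.\ $\mathrm{i}\eta_1^r$ in configuration (a). Your own Step 2 gives $a_1\sim(z-\mathrm{i}\eta_1^r)^0$ and $a_2\sim(z-\mathrm{i}\eta_1^r)^{-1/4}$. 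So the prefactor $a_2(z_-)/a_1(z_-)$ contributes $-1/4$, not $+1/4$, and $r_1$ has exponent $0$ there. It is bounded and nonzero, and it matches $1/(2a_1(z_+)a_1(z_-))$ coming from $\Sigma_1^\ell\setminus\Sigma_1^r$, as continuity of $r_1$ on $\Sigma_1^\ell$ requires. In addition, $\mathrm{i}\eta_2^r\notin\Sigma_1^\ell$, so $r_1$ is not even defined there. Since $r_1$ is nonzero at every $r$-endpoint on $\Sigma_1^\ell$ and undefined at every $r$-endpoint off it, the literal statement is false in every configuration. You should state and prove the corrected one.

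For the corrected statement your overlap analysis is genuinely needed, but at different points from those Step 3 names. In configuration (a), $\mathrm{i}\eta_2^\ell$ is an endpoint of $\Sigma_1^r\cap\Sigma_1^\ell$, not a point of $\Sigma_1^\ell\setminus\Sigma_1^r$. The same holds for $\mathrm{i}\eta_1^r$ with respect to $r_2$, and in configuration (b) for both $\ell$-endpoints. The simplified formulas are unavailable at these points, and your count of prefactor $+1/4$ and denominator $-1/4$ is exactly what applies.

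The cancellation worry needs no explicit constants. On the non-overlapping arcs $\beta\equiv\mathrm{i}$, for the following reasons:
\begin{itemize}
\item \eqref{a+symmetry} gives $a(z_+)=-\mathrm{i}b_2(z_-)$ on $\Sigma_1^\ell\setminus\Sigma_1^r$ and $a(z_+)=-\mathrm{i}b_1(z_-)$ on $\Sigma_1^r\setminus\Sigma_1^\ell$;
\item $a_2$ is continuous across $\Sigma_1^\ell\setminus\Sigma_1^r$ and $a_1$ across $\Sigma_1^r\setminus\Sigma_1^\ell$, so $a_1(z_+)a_2(z_-)=a(z_+)$, respectively $a_1(z_-)a_2(z_+)=a(z_+)$, there.
\end{itemize}
In the example, $\beta$ also extends continuously through each junction point, because the quartic singularities cancel in the ratio. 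Hence $1-\mathrm{i}\beta\to 2$ at every overlap endpoint. It follows that $r_1$ (resp.\ $r_2$) vanishes like a square root exactly where $a_1$ (resp.\ $a_2$) has its quartic singularity, that is, at $\mathrm{i}\eta_j^\ell$ (resp.\ $\mathrm{i}\eta_j^r$).
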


Substituting these explicit scattering data into the Riemann--Hilbert formulation in Section \ref{inverse}, we obtain the Riemann--Hilbert problem associated with the pure-step initial datum \eqref{e:stepic}.

\section*{Acknowledgments}
  We thank Marco Bertola for insightful  discussions during the preparation of this manuscript. 
  We also thank Xiaodong Zhu for valuable discussions and comments. His related work \cite{Zhu} was prepared in parallel with the present manuscript.

 T. Grava and Z. Zhang acknowledge the support of PRIN 2022 (2022TEB52W)  "The charm of integrability: from nonlinear waves to random matrices"-– Next Generation EU grant – PNRR Investimento M.4C.2.1.1 - CUP: G53D23001880006; the GNFM-INDAM group and the  research project Mathematical Methods in NonLinear Physics (MMNLP), Gruppo 4-Fisica Teorica of INFN. R. Jenkins acknowledges the support of the National Science Foundation grant DMS-2307142 and of the Simons Foundation grant MPS-TSM-853620. 7. X.F. Zhang acknowledges the support from the China Scholarship Council and the Postgraduate Research Innovation Program of Jiangsu Province Grant No. KYCX24-2670.

%%%%%%%%%%%%%%%%%%%%%%%%%%%%%%%%%%%%%%%%%%%%%%%%%%%%%%%%%%%%%%%%%%%%%%%%%%%%%%%%%%%%%%%%%%%%%%%%%%%%%%%%%%%%%%%%%%%%%%%%%%%%%%%%%%%%%%%
\medskip
\let\em=\it

\makeatletter
\def\@biblabel#1{#1.}
\def\doibase{http://dx.doi.org/}
\def\reftitle#1{``#1''}
\def\booktitle#1{\textit{#1}}
\def\href#1#2{#2}
\makeatother
\small

%%%%%%%%%%%%%%%%%%%%%%%%%%%%%%%%%%%%%%%%%%%%%%%%%%%%%%%%%%%%%%%%%%%%%%%%%%%%%%%%%%%%%%

\end{document}